\documentclass{amsart}
\usepackage{amssymb}
\usepackage{amsmath}
\usepackage{graphicx}
\usepackage{color}
\usepackage{amsfonts}





\newcommand{\e}{\varepsilon}
\newcommand{\ue}{u^{\varepsilon}}
\newcommand{\ueeta}{u^{\varepsilon,\eta}}
\newcommand{\ba}{{\bf a}}
\newcommand{\bx}{{\bf x}}
\newcommand{\bs}{{\bf s}}
\newcommand{\br}{{\bf r}}
\newcommand{\bF}{{\bf F}}
\newcommand{\by}{{\bf y}}
\newcommand{\bzeta}{{\bf{\zeta}}}
\usepackage{enumerate}

\newtheorem{Remark}{Remark}
\newtheorem{Theorem}{Theorem}
\newtheorem{Corollary}{Corollary}
\newtheorem{Example}{Example}
\newtheorem{Definition}{Definition}
\newtheorem{Lemma}{Lemma}
\definecolor{orange}{rgb}{0,0,0}
\definecolor{blue}{rgb}{0,0,0}
\definecolor{red}{rgb}{0,0,0}


\begin{document}

\title[Upscaling error in locally periodic media]{Estimates for the upscaling error in heterogeneous multiscale methods for wave propagation problems in locally periodic media}

\author[D.~Arjmand]{Doghonay Arjmand}
\author[O.~Runborg]{Olof Runborg}
\email{doghonay.arjmand@it.uu.se,olofr@kth.se}

\address{Division of Scientific Computing \\
  Department of Information Technology \\
  Uppsala University \\
  SE-751 05 Uppsala, Sweden.}
 
\address{Division of numerical analysis \\
  Department of Mathematics \\
  The Royal Institute of Technology \\
  Stockholm, Sweden.}

\keywords{multiscale methods, homogenization, wave propagation}
\begin{abstract}
This paper concerns the analysis of a multiscale method for wave propagation problems in microscopically nonhomogeneous media. A direct numerical approximation of such problems is prohibitively expensive as it requires resolving the microscopic variations over a much larger physical domain of interest. The heterogeneous multiscale method (HMM) is an efficient framework to approximate the solutions of multiscale problems. In HMM, one assumes an incomplete macroscopic model which is coupled to a known but expensive microscopic model. The micromodel is solved only locally to upscale the parameter values which are missing in the macromodel. The resulting macroscopic model can then be solved at a cost independent of the small scales in the problem.

In general, the accuracy of the HMM is related to how good the upscaling step approximates the right macroscopic quantities. The analysis of the method, that we consider here, was previously addressed only in purely periodic media although the method itself is numerically shown to be applicable to more general settings. In the present study, we consider a more realistic setting by assuming a locally-periodic medium where slow and fast variations are allowed at the same time. We then prove that HMM captures the right macroscopic effects. The generality of the tools and ideas in the analysis allows us to establish convergence rates in a multi-dimensional setting. The theoretical findings here imply an improved convergence rate in one-dimension, which also justifies the numerical observations from our earlier study.
\end{abstract}

\maketitle

\section{Introduction}
We consider the scalar wave equation in locally-periodic media
\begin{align} \label{eqn_Main}
\partial_{tt} \ue(t,\bx) &= \nabla \cdot   \Big( A(\bx,\bx/\e) \nabla \ue(t,\bx) \Big) + f(t,\bx), \quad \text{ in } \Omega \times (0,T]  \\
\ue(0,\bx) &= g(\bx), \quad \partial_t \ue(0,\bx) = h(\bx), \quad \text{ on } \Omega \times \{ t=0 \},   \nonumber \\
\ue(t,\bx) &= 0 \quad \text{ on } \partial \Omega \times [0,T] , \nonumber
\end{align} 
where $\Omega$ is a bounded open subset of $\mathbb{R}^d$ with $|\Omega| = O(1)$, and $A$ is a bounded symmetric positive-definite matrix function in $\mathbb{R}^{d \times d}$ such that for every $\bzeta \in \mathbb{R}^{d}$
\begin{equation*}
c_1 |\bzeta|^{2} \leq \sup_{\bx \in \Omega,\by \in Y} \zeta^{T} A(\bx,\by) \bzeta \leq c_2 \left| \bzeta \right|^{2}, \quad Y = (0,1]^{d}, \text{ and } A_{ij} = A_{ji}.
\end{equation*}
The coefficient $A$ is assumed to be locally-periodic, i.e. $\by \longrightarrow A(\bx,\by)$ is $Y$-periodic for all $\bx$ in $\Omega$, the parameter $\e \ll 1$ represents the wavelength of the small scale variations in the media, and $T=O(1)$ is a constant independent of $\e$. For simplicity, we will assume that $A$ is smooth, i.e., $A_{ij} \in C^{\infty}(\overline{\Omega} \times \overline{Y})$, but most of the theoretical results in this paper will be valid for less regular coefficients such as $\partial_{\bx}^{k} A_{ij} \in C(\overline{\Omega},L^{\infty}(Y))$ for $k\leq 2$.

When $\e \ll 1$, the solution to equation \eqref{eqn_Main} exhibits variations at a coarse and a fine scale, in time and space, due to the heterogeneities in the coefficient function $A$. In this case, a direct numerical simulation (DNS) of  \eqref{eqn_Main} becomes highly demanding since $\e$-scale variations must be resolved over the entire domain, leading to $O(\e^{-d-1})$ degrees of freedom. Although DNS provides us with detailed information about the behavior of the solution at different scales, in engineering practices, we are often interested only in a coarse scale description of the solution. This raises the following question: can we approximate the coarse scale, i.e. a local average, part of the solution at a cost much lower than the cost of DNS? From a mathematical point of view this is related to the theory of homogenization which can be traced back to $1970$s. On the other hand, from a numerical point of view, this issue has triggered the birth and development of a number of successful numerical multiscale approaches over the last two decades.

First we give a brief summary of the idea behind analytical homogenization. The term homogenization was introduced to the mathematical  literature by I. Babuska, \cite{Babuska} and since then the theory has been systematically developed by contributions of various researchers, see e.g. \cite{Bensoussan_Lions_Papanicolaou,Cioranescu_Donato,Jikov_Kozlov_Oleinik,Pavliotis_Stuart} for an exposition of the method, where  references to earlier literature can also be found. Mathematically speaking, the aim of the homogenization theory is to find a limiting solution $u^{\e} \longrightarrow u^{0}$, in some appropriate sense, and possibly a homogenized problem, satisfied by $u^0$, which is no more dependent on the small scale parameter $\e$. In a few cases, it is possible to write down explicit equations for the homogenized problem. For example, when the medium is periodic or locally periodic, the homogenized solution $u^{0}$ solves 
\begin{align}  \label{Eqn_Intro_Homogenization}
\partial_{tt} u^0(t,\bx) &= \nabla \cdot   \Big( A^{0}(\bx) \nabla u^0(t,\bx) \Big) + f(t,\bx), \quad \text{ in } \Omega \times (0,T]  \\
u^0(0,\bx) &= g(\bx), \quad \partial_t u^0(0,\bx) = h(\bx), \quad \text{ on }  \Omega \times \{ t=0 \},   \nonumber
\end{align} 
where the homogenized matrix $A^{0}$ is given by
\begin{equation}\label{eqn_C_HatA_Def} 
A^{0}_{ij}(\bx) = \int_Y \left( A_{ij}(\bx,\by)  + \sum_{k=1}^{d} A_{ik} \partial_{y_k} \chi_{j}(\bx,\by) \right) \; d\by,
\end{equation}
and $\{ \chi_{\ell} \}_{\ell=1}^{d}$ are $Y$-periodic solutions of the following set of cell problems
\begin{align} \label{eqn_CellProblemChi}
\nabla_{\by} \cdot \left(  A(\bx,\by) \nabla_{\by} \chi_{\ell}(\bx,\by)    + A(\bx,\by) e_{\ell}  \right) = 0, \quad \int_{Y} \chi_{\ell}(\bx,\by) \; d\by = 0,
\end{align}
where $\{  e_{\ell} \}_{\ell=1}^{d}$ are canonical basis vectors in $\mathbb{R}^{d}$. Although the homogenization theory provides us with powerful analytical tools to study existence and uniqueness of homogenized solutions, it is of limited practical use since an explicit representation for the homogenized coefficient $A^{0}$ is often missing. In such a case, the tendency is to design multiscale numerical methods which target the coarse scale behavior of the solution, i.e., the homogenized solution, without assuming a priori knowledge about the homogenized coefficient or the precise nature of the coefficient $A$. Within the last two decades, several multiscale strategies have been proposed to approximate the coarse-scale dynamics of problems which possess variations at multiple scales. To name a few, the variational multiscale method (VMM) pioneered by Hughes et al. \cite{Hughes_Feijoo_Mazzei_Quincy}, the multiscale finite element method (MsFEM) by T. Hou et al. \cite{Hou_Wu}, the equation-free approach by Kevrikidis et al. \cite{Kevrikidis_etal}, and the heterogeneous multiscale method (HMM) due to E and Engquist \cite{E_Engquist_1} are successful examples of such general frameworks. Due to the large literature available on these methodologies, at this stage, we refer the curious reader only to few representative works in the context of applications to multiscale PDEs, see e.g. \cite{Abdulle_Henning_1,Henning_Malqvist_1,Malqvist_Peterseim} for methods inspired by the VMM principle, \cite{Efendiev_Hou_Book,Efendiev_Hou_Wu,Hou_Wu_Zhang,Henning_Peterseim_1} for MsFEM type methods, and \cite{Arjmand_Stohrer,Engquist_Holst_Runborg_1,Abdulle_Grote_1} for HMM based strategies. See also \cite{Babuska_Lipton,Engquist_Runborg_1,Gloria_1,Gloria_2,Owhadi_Zhang_1,Owhadi_Zhang_Berlyand,Schwab_1} for other relevant literatures on multiscale approaches for PDEs.

The focus of the present work is  on the HMM strategy. The idea behind HMM is to assume a macroscale model which lacks certain input data. To close the macromodel, one then solves a micromodel locally and computes effective parameter values needed for the macromodel. Since the micromodel is solved only locally, the approach leads to a significant improvement in terms of computational cost in comparison to traditional numerical schemes. HMM has been applied to a wide range of multiscale problems, see e.g. \cite{Abdulle,Abdulle_Vilmart_1,Arjmand_Runborg_1,Arjmand_Stohrer,E_Ming_Zhang} for elliptic, parabolic and second-order hyperbolic PDEs, \cite{Ren_E} for applications in micro-fluidics, \cite{Engquist_Tsai_1,E} for applications in ODEs with multiple scales, and \cite{Abdulle_E_Engquist_1} for a recent overview of the method. 

In \cite{E_Engquist_1,Abdulle_E_Engquist_1}, a general framework for the analysis of HMM for multiscale PDEs is given. The idea is to split the error between the HMM and the homogenized solution \eqref{Eqn_Intro_Homogenization} into three parts
\begin{align*}
\left| U_{HMM} - u^{0} \right|  =  e_{macro} + e_{upscaling} + e_{micro},
\end{align*}
{\color{red}{ where $e_{macro}$ and $e_{micro}$ are discretization errors, and $e_{upscaling}$ or so-called the HMM error is related to the accuracy of \emph{upscaling} procedure where effective parameters in the macromodel are computed using local microscopic simulations, {\color{red} see Remark \ref{Remark_UpscalingError}}. The aim of this study is to estimate the difference between the upscaled effective parameters and the exact homogenized quantities. A fully discrete analysis of a finite element HMM for elliptic PDEs can be found e.g. in \cite{Abdulle}. The analysis of the discretization errors is omitted in the present work and can be carried out using standard theory of finite differences or finite elements, see e.g. \cite{Brenner_Scott,Gustafsson_Kreiss_Oliger,Samarskii_Book}.}}

In this paper, we analyze a finite difference HMM (FD-HMM) from \cite{Engquist_Holst_Runborg_1} which approximates the solution of the second order wave equation \eqref{eqn_Main}, see Section \ref{HMM_Sec} for a summary of the FD-HMM. The analysis of this FD-HMM in purely periodic media was previously addressed  in \cite{Engquist_Holst_Runborg_1} for short time problems where $T = O(1)$, and in \cite{Arjmand_Runborg_2,Engquist_Holst_Runborg_LongTime} for long time scales where $T = O(\e^{-2})$. To be able to go beyond the rather academic case of periodic media and to address a more realistic scenario, an extension of the theory to non-periodic media is needed. The difficulty lies in the fact that existing theoretical results in the periodic case do not directly apply to non-periodic media. {\color{blue} The analysis in this paper does not fully cover the general non-periodic theory, but is based on two main assumptions: i) as a special case of non-periodic coefficients, locally-periodic coefficients, see the coefficient $A$ in \eqref{eqn_Main}, are assumed. ii) A scale separation, $\e \ll 1$, in the coefficient $A$ (and hence in the solution) is assumed. }

This article is structured as follows. In Section \ref{HMM_Sec}, we introduce the multiscale method. A detailed analysis of the method is then given in Section \ref{Analysis_Sec}. We finish the article by a conclusion in Section \ref{Discussion_Sec}.

\section{HMM}
\label{HMM_Sec}
The FD-HMM uses the following macromodel for approximating the solution of problem \eqref{eqn_Main}
\begin{equation}
\label{Intro_Macro_Model}
\text{Macro problem: }
\begin{array}{lll}
 \partial_{tt} U(t,\bx)  - \nabla \cdot \bF(\bx,\nabla U) = f(\bx), \quad \text{ in } \Omega \times (0,T]   \\
U(0,\bx) = g(\bx), \quad \partial_t U(0,\bx) = h(\bx), \quad \text{ on } \Omega \times \{ t=0 \}, \\
U(t,\bx) = 0, \quad \text{ on } \partial\Omega \times [0,T].
\end{array}
\end{equation}

Here $U$ is the macroscopic solution and $\bF=(F^{1},F^{2}, \cdots, F^{d})$ is the missing data in the model. A finite difference discretization (in two dimensions) of the macro problem \eqref{Intro_Macro_Model} gives
\begin{equation} \label{Macro_Solver_Eqn}
\begin{array}{ll}
U_{i,j}^{n+1} = 2 U_{i,j}^{n} - U_{i,j}^{n-1} + \triangle t^{2} \left( \dfrac{F_{i+\frac{1}{2},j}^{1,n} - F_{i-\frac{1}{2},j}^{1,n} }{H} + \dfrac{F_{i,j+\frac{1}{2}}^{2,n} - F_{i,j-\frac{1}{2}}^{2,n}}{H} \right) + \triangle t^{2} f_{i,j}^{n}. 
\end{array}
\end{equation}
Moreover,  $U^0_{i,j} = g_{i,j}$, and $U^{1}_{i,j}$ is given by 
\begin{align*}
U^{1}_{i,j} & \approx U(\triangle t, \bx_{i,j}) \approx U(0,\bx_{i,j}) + \triangle t \partial_t U(0,\bx_{i,j}) + \dfrac{\triangle t^2}{2} \partial_{tt} U(0,\bx_{i,j}),
\end{align*}
where the initial data in \eqref{Intro_Macro_Model} can be used to compute the first two terms in the right hand side, and the last term is computed by using the equation \eqref{Intro_Macro_Model} which also requires computing $\bF$ at time $t=0$. To compute the unknown $\bF_{i+\frac{1}{2},j}^{n}$ in the macro solver \eqref{Macro_Solver_Eqn}, we solve the multiscale problem \eqref{eqn_Main} over a microscopic box $I_{\tau} \times \Omega_{\bx_{i+1/2,j}}$, where $I_{\tau} = (0,\tau/2]$ and $\tau/2$ is the final time for the microscopic simulations, and $\Omega_{\bx_{i+1/2,j},\eta}:= \bx_{i+1/2,j} + [-L_{\eta}, L_{\eta}]^{d}$ where $L_{\eta} \geq \frac{\eta}{2} + \frac{\tau}{2} \sqrt{|A|_{\infty}} $ and in practice $\tau =\eta = O(\e)$. In other words, we solve
\begin{equation}
\label{Intro_Micro_Problem}
\text{Micro problem: }
\begin{array}{lll}
\partial_{tt}u^{\e,\eta}(t,\bx)  - \nabla \cdot \left(A(\bx,\bx/\e) \nabla u^{\e,\eta} \right) = 0,  \text{ in } \Omega_{\bx_{i+1/2,j},\eta} \times I_{\tau}\\
u^{\e,\eta}(0,\bx)  = \hat{u}(\bx), \quad \partial_{t}u^{\e,\eta}(0,\bx)= 0,  \quad \text{ on } {\Omega_{\bx_{i+1/2,j},\eta}} \times \{ t=0 \}, \\
u^{\e,\eta}(0,\bx) - \hat{u}(\bx) \quad \text{ is periodic in } \Omega_{\bx_{i+1/2, j},\eta}. 
\end{array}
\end{equation}
where $\hat{u}(\bx)$ is a  linear approximation of the neighbouring coarse scale data. {\color{red} In one-dimension, it is given by $\hat{u}(x) = s (x-x_{i+1/2}) + (U_{i+1} + U_{i})/2$, where $s = (U_{i+1}-U_{i})/H$ is the slope at the point $x_{i+1/2}$, and in two dimensions (as well as higher dimensions) it is found by a linear least square approximation of $\{ U_{i,j+k},U_{i+1,j+k} \}_{k=-1}^{k=1}$. Hence, in general we have
\begin{equation*} \label{Eqn_MacroState}
\hat{u}(\bx) = \bs \cdot (\bx- \bx_{i+1/2,j}) + c_0,
\end{equation*}
where $\bs \in \mathbb{R}^{d}$ is the slope vector at the point $\bx_{i+1/2,j}$, and $c_0$ is a suitable constant.} {\color{red} Moreover, for the micro problem, other boundary conditions such as the Dirichlet condition $u^{\e,\eta}(\bx) = \hat{u}(\bx)$ can also be used.}
\begin{Remark} Note that if $\tau = \eta  = O(\e)$, the computational cost of solving the micro problem \eqref{Intro_Micro_Problem} becomes independent of $\e$ since the solution will contain only few oscillations, in time and space, within the microscopic domain.
\end{Remark}
For the local averaging we introduce the space $\mathbb{K}^{p,q}$ which consists of functions $K \in C^{q}(\mathbb{R})$ compactly supported in $[-1,1]$, and $K^{(q+1)} \in BV(\mathbb{R})$, where the derivative is understood in the weak sense and $BV$ is the space of functions with bounded variations on $\mathbb{R}$. Moreover, the parameter $p$ represents the number of vanishing moments
$$
\int_{\mathbb{R}} K(t) t^r dt  = 
\begin{cases}
1 & r=0, \\
0 & {\color{blue} 1\leq } r \leq p.
\end{cases}
$$
As local averaging takes place in a domain of size $\eta$, we consider the scaled kernel
$$
K_{\eta}(x) = \dfrac{1}{\eta} K(x/\eta).
$$
Finally, the flux $\bF_{i+1/2,j}$ is computed by 
\begin{equation}\label{Intro_HMM_Flux}
\bF_{i+1/2,j}  = \left( \mathcal{K}_{\tau,\eta} \ast A(\cdot,\cdot/\e) \nabla \ueeta(\cdot,\cdot) \right)(0,\bx_{i+1/2,j}),
\end{equation}
where
\begin{equation*}
\left( \mathcal{K}_{\tau,\eta} \ast f \right)(t,\bx)= \int_{t-\tau/2}^{t+\tau/2}\int_{\Omega_{\bx,\eta}} K_{\eta}(\tilde{\bx}-\bx) K_{\tau}(\tilde{t}- t) f(\tilde{t}, \tilde{\bx}) \; d\tilde{\bx} \; d\tilde{t},
\end{equation*}
and where in $d$-dimension, $K_{\eta}(\bx)$ is understood as 
$$
K_{\eta}(\bx) = K_{\eta}(x_1)K_{\eta}(x_2) \cdots K_{\eta}(x_d).
$$
{\color{red} Note that to compute the HMM solutions $U_{i,j}^{n}$, an approximation of the microscopic solution  $u^{\e,\eta}$ solving \eqref{Intro_Micro_Problem} as well as a numerical approximation for the integration \eqref{Intro_HMM_Flux} are needed. In \cite{Arjmand_Runborg_1} and \cite{Engquist_Holst_Runborg_1} a simple leap-frog scheme and a standard trapezoidal rule are used to approximate these quantities. }
\begin{Remark} Note that in the upscaling step \eqref{Intro_HMM_Flux}, we need  the values of the solution for the micro problem \eqref{Intro_Micro_Problem} in the time interval $[-\tau/2,0)$. This requires no additional cost since  the symmetry property $u^{\e,\eta}(t,\bx) = u^{\e,\eta}(-t,\bx)$ easily follows due to the condition $\partial_t u^{\e,\eta}(0,\bx) = 0$.
\end{Remark}
\begin{Remark} In general, larger values for the parameters $p$, and $q$ result in better approximation properties, see Lemma \ref{Lemma_Kernel}. Moreover, taking large $p,q$ does not increase the computational cost, see \cite{Holst_2011} for construction of such averaging functions for all $p,q$.
\end{Remark}
{\color{red} \begin{Remark}\label{Remark_UpscalingError} Let $u^{0}$ be the solution of the homogenized equation \eqref{Eqn_Intro_Homogenization}. Let $\bar{u}_{i,j}^{n} \approx u^{0}(\bx_{i,j},t_n)$ be the approximate solution which solves the numerical scheme \eqref{Macro_Solver_Eqn} but with $\bF_{i,j}^{n} := \bF((\nabla U)_{i,j}^{n})$ replaced by $\hat{\bF}((\nabla \bar{u})_{i,j}^{n})$, where $(\nabla \bar{u})_{i,j}^{n}$ represents the slope of the approximate solution $\bar{u}_{i,j}^{n}$ at the point $(t_n,\bx_{i,j})$, and $ \hat{\bF}((\nabla \bar{u})_{i,j}^{n}) := A^{0}(\bx_{i,j}) (\nabla \bar{u})_{i,j}^{n}$. Moreover, let $\tilde{U}_{i,j}^{n}$ be the HMM solution when the micro-problem \eqref{Intro_Micro_Problem} and the integral \eqref{Intro_HMM_Flux} is solved exactly. Then the difference between the HMM solution $U_{i,j}^{n}$ and the homogenized solution $u^0(t_n,\bx_{i,j})$ can be split as follows:
$$
\left|  U_{i,j}^{n}  - u^{0}(t_n,\bx_{i,j})  \right| \leq    \underbrace{\left|  U_{i,j}^{n}  - \tilde{U}_{i,j}^{n}  \right|}_{e_{micro}} + \underbrace{\left|  \tilde{U}_{i,j}^{n}   - \bar{u}_{i,j}^{n}   \right|}_{e_{upscaling}}  +  \underbrace{\left| \bar{u}_{i,j}^{n}    - u^{0}(t_n,\bx_{i,j})   \right| }_{e_{macro}}
$$
Let $D \cdot \bF_{i,j}^{n}$ be  the discrete approximation of the divergence operator $\nabla \cdot \bF(t,\bx)$ given in the macro-solver \eqref{Macro_Solver_Eqn}, and $D_t^2 U_{i,j}^{n} :=  (U_{i,j}^{n+1}- 2  U_{i,j}^{n} +  U_{i,j}^{n-1})/\triangle t^2$. Then the error $e_{i,j}^{n}:= \tilde{U}_{i,j}^{n}   - \bar{u}_{i,j}^{n}$ satisfies 
\begin{align*}
D^2_{t} e^{n}_{i,j}  &= D \cdot \hat{\bF}((\nabla e)_{i,j}^{n})  + D \cdot \left(   \hat{\bF}((\nabla \bar{u})_{i,j}^{n})   -   \bF((\nabla \bar{u})_{i,j}^{n})\right) \\
e^{0}_{i,j} &= 0, \quad e^{1}_{i,j} = \frac{\triangle t^2}{2} D \cdot \left(   \bF((\nabla \bar{u})_{i,j}^{0})   -   \hat{\bF}((\nabla \bar{u})_{i,j}^{0})\right).
\end{align*}
This shows that the upscaling error will be small if the difference between the fluxes $\bF$ and $\hat{\bF}$ is small in comparison to the macroscopic mesh size $H$, see \cite{Abulle_Weinan_2003} for a similar result in the parabolic setting. The aim of this paper is to show that the difference in flux is small. 
\end{Remark}
}

\section{The main result}
{\color{orange} The macromodel \eqref{Intro_Macro_Model} in HMM approximates the effective equation \eqref{Eqn_Intro_Homogenization}. The HMM flux $\bF_{i+1/2,j} \in \mathbb{R}^{d}$ given by \eqref{Intro_HMM_Flux} should then approximate the homogenized flux:
\begin{equation}\label{Homogenized_Flux_Eqn}
\hat{\bF}(\bx_{i+1/2,j}) = A^{0}(\bx)  \nabla \hat{u}(\bx) |_{\bx = \bx_{i+1/2,j}},
\end{equation}
where $\hat{u}(\bx)$ is given in \eqref{Intro_Micro_Problem}. Our main goal here is to prove that if $\hat{u}$ is linear, then HMM captures the homogenized flux in \eqref{Homogenized_Flux_Eqn} up to high orders of accuracy.} {\color{red} Note that since $\hat{u}$ is given by a linear approximation of the coarse scale data, the  flux \eqref{Homogenized_Flux_Eqn} should be interpreted as a local approximation to the exact homogenized flux $\bF^{0}(\bx) = A^{0}(\bx) \nabla u^{0}(\bx)$ in \eqref{Eqn_Intro_Homogenization}. }

{\color{orange} When the media is periodic, i.e., $A = A(\bx/\e)$, the homogenized coefficient $A^{0}$ is a constant matrix and the HMM flux \eqref{Analysis_HMM_Flux} approximates the homogenized flux \eqref{Homogenized_Flux_Eqn} as follows, \cite{Engquist_Holst_Runborg_1}:
\begin{align} \label{PeriodicFlux_Estimate}
\bF  = \hat{\bF}  + O\left( \left( \dfrac{\e}{\eta} \right)^{q+2} \right),
\end{align}
where higher values for $q$ implies better regularity properties for the averaging kernel $K$. In \cite{Arjmand_Runborg_2}, however, various numerical evidence demonstrated that the above rate is no longer valid for locally-periodic coefficients. Moreover, the convergence rate in one-dimension was numerically observed to be different than the rate in higher dimensions. We give the following example to better illustrate the idea.
\begin{Example} Consider the micro problem \eqref{Analysis_MicroProblem_MultiD} in one dimension, with periodic and locally-periodic coefficients
\begin{align}
A(y) & = 1.1 + \dfrac{1}{2}\left( \sin(r_0) + \sin(2\pi y + 2) \right), \nonumber \\ 
A(x,y) & = 1.1 + \dfrac{1}{2}\left( \sin(2 \pi x + r_0) + \sin(2\pi y + 2) \right), \quad r_0 = \frac{1}{10}. \label{1DCoefficients_Example}
\end{align}
The left plot in Figure \ref{Fig_FluxConv1D2D} shows the $O((\e/\eta)^{q+2})$ convergence rate in \eqref{PeriodicFlux_Estimate}, for the periodic coefficient given in \eqref{1DCoefficients_Example}, and $O(\e^{2})$ asymptotic rate for the locally-periodic coefficient. Consider also the two-dimensional coefficients
\begin{align}
A(\by) & = (1.5 + \sin(2\pi y_1))(1.5 + \sin(2 \pi y_2)), \nonumber \\ 
A(\bx,\by) & = (1.5 + \sin(2 \pi y_1) + \sin(2 \pi x_2) \cos(2\pi y_1) ). \label{2DCoefficients_Example}
\end{align}
The right plot in Figure \ref{Fig_FluxConv1D2D}, depicts the $O((\e/\eta)^{q+2})$ convergence rate that the periodic theory predicts, while an $O(\e)$ asymptotic convergence rate is observed for the locally-periodic coefficient in \eqref{2DCoefficients_Example}.
\end{Example}

 \begin{figure}[h] 
    \centering
        \includegraphics[width=0.48\textwidth]{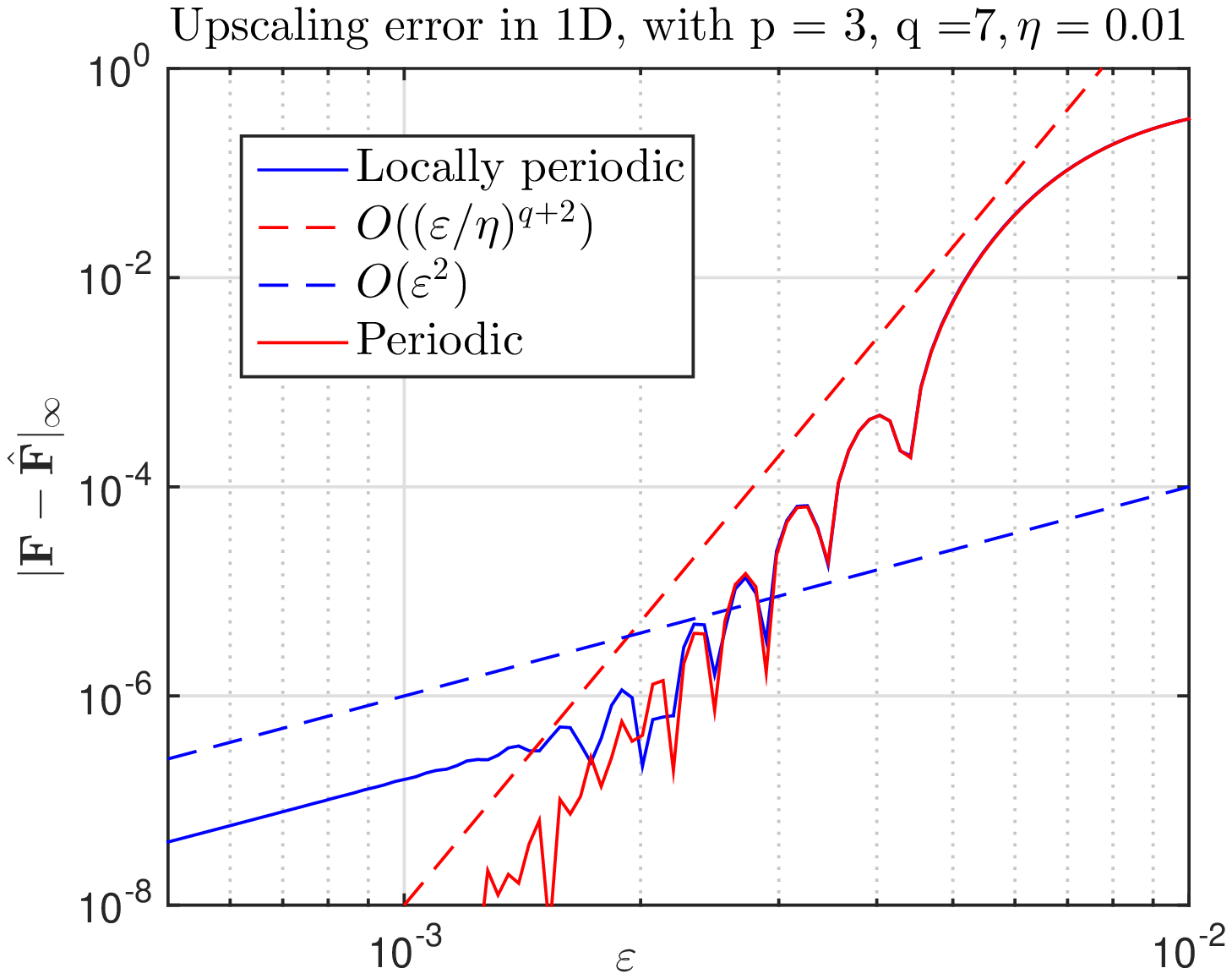}
        \includegraphics[width=0.48\textwidth]{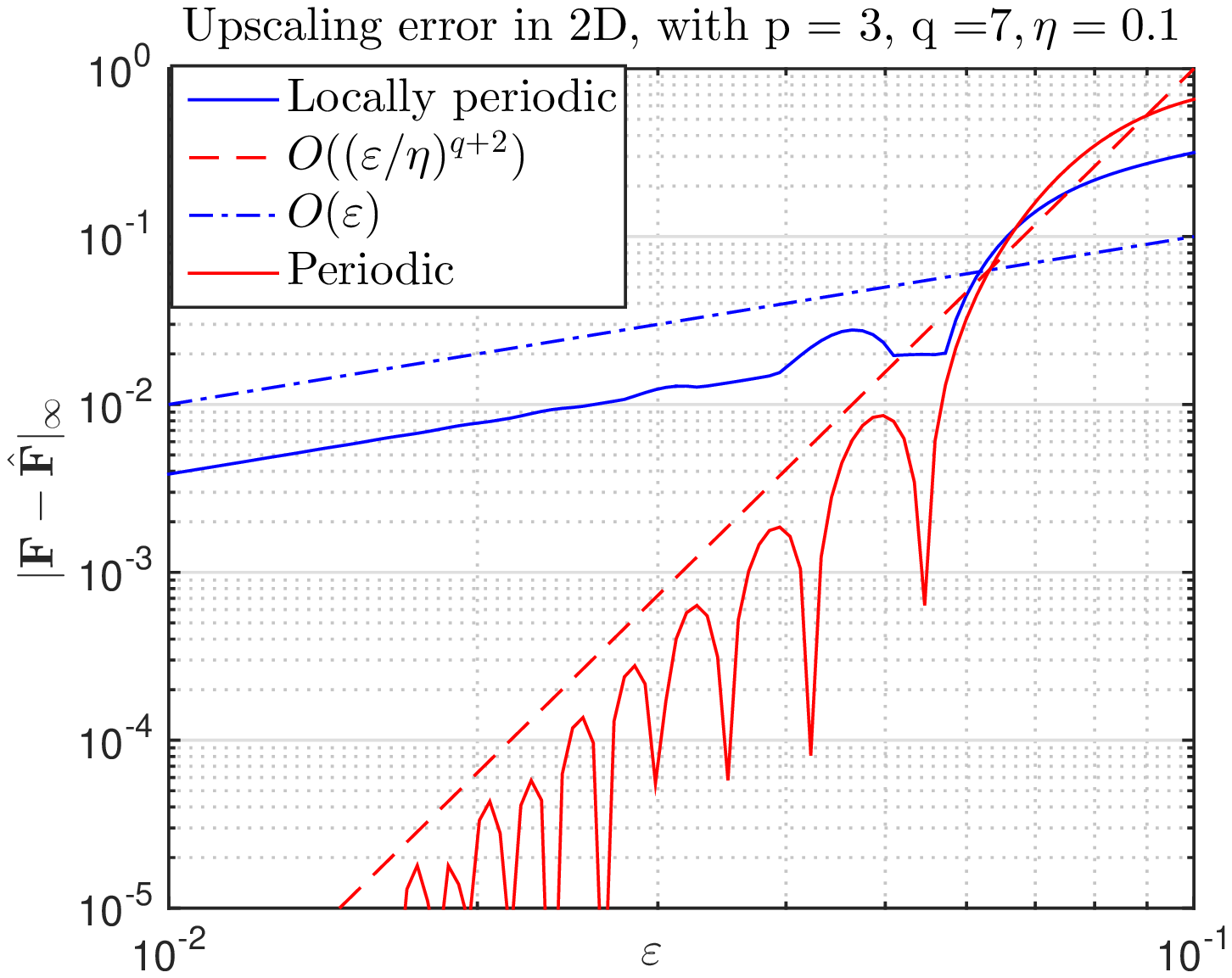}
    \caption{The upscaling errors $| \bF - \hat{\bF}|_\infty$ for periodic and locally-periodic materials in one and two dimensions are depicted. (Left) Upscaling error for the periodic and locally-periodic coefficients \eqref{1DCoefficients_Example} in one dimension: the micro problem \eqref{Analysis_MicroProblem_MultiD} is solved with the parameters $\tau = \eta = 0.01$. The result indicates different convergence rates in periodic and locally-periodic media. (Right) Upscaling error for the periodic and locally-periodic coefficients \eqref{1DCoefficients_Example} in two dimensions: the micro problem \eqref{Analysis_MicroProblem_MultiD} is solved with the parameters $\tau = \eta = 0.1$. The result indicates again different convergence rates in periodic and locally-periodic media. Note also the dependency of the convergence rates, in locally-periodic media, on the dimension. In one dimension, we observe $O(\e^{2})$ asymptotic rate while in two dimensions the convergence rate becomes $O(\e)$.  }
                \label{Fig_FluxConv1D2D}
\end{figure}

In the present paper, we are able to give a rigorous analysis revealing the convergence rates in a multi-dimensional locally-periodic setting. Moreover, our theory fully explains the mentioned dimension-dependent phenomenon. The main result of this paper is the following theorem.
\begin{Theorem}\label{Thm_Main_Thm}Let $\bF$ be given as in \eqref{Analysis_HMM_Flux}, and $\hat{\bF}$ be the homogenized flux \eqref{Homogenized_Flux_Eqn}. Furthermore, let  $\hat{u} =  \bs \cdot \bx$, $K \in \mathbb{K}^{p,q}$ with an even $q$ and $p>1$, and $0<\e \leq \eta=\tau <1$, and assume that $\br_0$ belongs to the compact set $\overline{\Omega}$. Then 
\begin{align*}
\sup_{\br_0 \in \overline{\Omega}}\left| \bF(\br_0) -  \hat{\bF}(\br_0) \right|_{\infty} \leq C  \left|\bs\right|_{\infty} \begin{cases}   \left(\dfrac{\e}{\eta} \right)^{q-1} + \e^{-5} \eta^{7}, & d=1, \\
   \left(\dfrac{\e}{\eta} \right)^{q-1} + \e +\e^{-5} \eta^{7}, & d> 1. 
\end{cases}
\end{align*}
where $C$ does not depend on $\bx, \e, \eta$ but may depend on $K, p, q,d$ or $A$. Moreover, if $\eta = \e^{1-\beta}$ for $0 < \beta < 2/7$, then
\begin{align} \label{Eqn_FHMM_Estimate}
\sup_{\br_0 \in \overline{\Omega}} \left| \bF(\br_0) -  \hat{\bF}(\br_0)  \right|_{\infty} \leq C  |\bs|_{\infty} \begin{cases}  \e^{\beta \left( q-1 \right)} + \e^{2 - 7 \beta} & d=1, \\
 \e^{\beta \left( q-1 \right)} + \e +  \e^{2 - 7 \beta} & d>1.
\end{cases}
\end{align}
\end{Theorem}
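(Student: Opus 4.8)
The plan is to substitute the locally-periodic two-scale expansion of the micro solution $\ueeta$ into the flux \eqref{Intro_HMM_Flux} and then read off the three error contributions from the way the averaging kernel acts on the resulting structure. Writing $\by=\bx/\e$ and expanding
\[
\ueeta = u_0 + \e\sum_{\ell=1}^{d}\chi_\ell(\bx,\by)\,\partial_{x_\ell}u_0 + \e^2 u_2 + \cdots,
\]
with $u_0$ the solution of the homogenized wave equation with coefficient $A^{0}$ from \eqref{eqn_C_HatA_Def} for the linear data $\hat u=\bs\cdot\bx$ and zero initial velocity, the flux density $A(\bx,\by)\nabla\ueeta$ takes the form $A^{0}(\bx)\bs+\tilde G(\bx,\by)+\e\,G_1(\bx,\by)+\cdots$, where, by the definition \eqref{eqn_C_HatA_Def} of $A^{0}$ and the cell problems \eqref{eqn_CellProblemChi}, the correction $\tilde G$ has zero average over $Y$ in $\by$, and $G_1$ carries the first slow correction. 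Since $\hat{\bF}(\br_0)=A^{0}(\br_0)\bs$ by \eqref{Homogenized_Flux_Eqn}, the task reduces to estimating the action of $\mathcal{K}_{\tau,\eta}$ on each of these pieces at $(0,\br_0)$.

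First I would treat the spatial averaging at $t=0$. For the macroscopic part $A^{0}(\bx)\bs$ I Taylor-expand in the slow variable and use the vanishing moments of $K$ from Lemma \ref{Lemma_Kernel}, which reproduce $A^{0}(\br_0)\bs=\hat{\bF}(\br_0)$ up to a high power of $\eta$ that is absorbed by the stated bound. For the zero-mean oscillation $\tilde G(\bx,\by)$ I freeze the slow variable at $\br_0$: the frozen piece $\tilde G(\br_0,\by)$ is $Y$-periodic with zero mean, so the periodic averaging estimate \eqref{PeriodicFlux_Estimate} (equivalently Lemma \ref{Lemma_Kernel}) gives the favourable rate $(\e/\eta)^{q+2}$. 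The remaining slow modulation is handled by expanding $\tilde G(\bx,\by)$ in powers of $\bx-\br_0$; each slow factor lowers the smoothness that the kernel can exploit against the $\e$-oscillation, and the dominant surviving contribution degrades the periodic rate to the stated $(\e/\eta)^{q-1}$.

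The dimension-dependent term lives in the $\by$-average of the first-order piece $G_1$. Its mean over $Y$ is, schematically, the pairing of the slow gradient $\nabla_{\bx}A(\br_0,\by)$ with the leading flux oscillation $A(\br_0,\by)(\bs+\sum_\ell s_\ell\nabla_{\by}\chi_\ell)$. For $d>1$ this oscillation genuinely depends on $\by$, the average does not vanish, and one is left with the $\e$ term. For $d=1$, however, the cell equation \eqref{eqn_CellProblemChi} integrates once to $A(x,y)\big(\partial_y\chi_1(x,y)+1\big)\equiv A^{0}(x)$, so the leading flux is independent of $y$; pairing it with the zero-mean slow oscillation gives a vanishing average and the $\e$ term is absent, which is exactly the one-dimensional improvement claimed.

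The main obstacle is the time dependence, which produces the $\e^{-5}\eta^{7}$ term. Because $A^{0}$ is not constant, $\nabla\!\cdot(A^{0}(\bx)\bs)\neq0$ in general, so $u_0$ and the higher correctors evolve, and the micro dynamics over $(0,\tau/2]$ generate secular contributions that grow polynomially in $t$ and are weighted by negative powers of $\e$ coming from differentiating the $\e$-scale oscillations. I would bound $\ueeta$ and its relevant derivatives by energy estimates for the wave operator with oscillatory coefficient, propagated by Duhamel's formula; the vanishing moments and even order of $K$ then annihilate the leading secular modes in the time average $K_\tau$, and estimating the residual over the window of length $\tau=\eta$ yields the $\e^{-5}\eta^{7}$ contribution. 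This bookkeeping across several orders of the expansion is the delicate step. Summing the three contributions gives the first estimate; the second, \eqref{Eqn_FHMM_Estimate}, follows by inserting $\eta=\e^{1-\beta}$, which turns $(\e/\eta)^{q-1}$ into $\e^{\beta(q-1)}$ and $\e^{-5}\eta^{7}$ into $\e^{2-7\beta}$.
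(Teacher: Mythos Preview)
Your outline has the right qualitative shape---a zero-mean oscillation killed by the kernel, an $O(\e)$ slow correction that cancels in one dimension via $A(x,y)(1+\partial_y\chi_1)=A^0(x)$, and a time-evolution remainder---but two of the three mechanisms you name are not the ones that produce the stated rates, and the central technical device is missing. The paper does \emph{not} use the classical two-scale ansatz $u^{\e,\eta}=u_0+\e\sum_\ell\chi_\ell\,\partial_{x_\ell}u_0+\cdots$; since $\nabla\cdot(A^0(\bx)\bs)\neq0$, your $u_0$ genuinely evolves over $[0,\tau/2]$ with $\tau/\e\to\infty$, and controlling the correctors uniformly through that evolution is exactly the difficulty you wave past. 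Instead one rescales $u^{\e,\eta}(\e t,\e\bx)=\e v(t,\bx;\e)$ and Taylor-expands $v$ in $\e$: then $v_0$ solves a wave equation with the \emph{frozen} periodic coefficient $A(\br_0,\by)$, and the quasi-polynomial lemma reduces $v_0,v_1$ to $Y$-periodic pieces $v_{00},v_{11j},\tilde v_{10}$.

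The key step you are missing is that the time kernel $K_\tau$ converts those \emph{hyperbolic} periodic problems into \emph{elliptic} ones: $L[d_{00}]=-\nabla_\by\cdot A\bs+\alpha^q Z_{00}$, etc.\ (Theorem~\ref{Thm_Time_Averaging_Wave_Equation} and Theorem~\ref{Thm_TimeAverages_d}). Comparing $d_{00}$ to $\sum_\ell s_\ell\chi_\ell$ then gives $|\bF_0-A^0\bs|\le C\alpha^q$. The $(\e/\eta)^{q-1}$ does \emph{not} arise from slow spatial modulation degrading the kernel exponent---Lemma~\ref{Lemma_Kernel} shows a factor $\bx^\beta$ contributes $\eta^{|\beta|}$, not a loss in the $\alpha$-power---but from the time-averaging remainders: $\|Z_{11j}\|\le C\alpha^{-1}$ because $v_{11j}$ grows linearly on the rescaled time interval $[0,\alpha^{-1}]$, and this is what degrades $\alpha^q$ to $\alpha^{q-1}$ in the one-dimensional bound on $\bF_1$. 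Likewise the $\e^{-5}\eta^7$ is not a generic secular-mode estimate but the explicit tail $\|\nabla(v-\tilde v_1)\|_{L^2}$ of the $\e$-Taylor expansion truncated at order one (Theorem~\ref{Truncated_Convergence_Thm} and Corollary~\ref{Lemma_Tail_Estimate}): the bound $C\e^2(1+M^{d/2+1})(1+M^2)(1+t^4)$ with $M=t=\eta/\e$, combined with the $\eta^{-d/2}$ from the kernel normalization, is what produces exactly those exponents. Your Duhamel sketch does not account for them.
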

\begin{Remark} The HMM error in \eqref{Eqn_FHMM_Estimate} can be made almost equal to $O(\e^{k} + \e^{2})$ for $d=1$ and to $O(\e^{k} + \e)$ for $d>1$, where $k\geq 2$, upon choosing small enough $\beta$ and a large enough $q$, i.e., $q= k/\beta + 1$. Small values for the parameter $\beta$ imply a low computational cost as the size of the micro domains are $\eta = O(\e^{1-\beta}) \approx O(\e)$.
\end{Remark}

\begin{Remark} Note that the simulations in Figure \ref{Fig_FluxConv1D2D} are done for fixed $\eta$ and varying $\e$, and that $O(\e^{2})$ and $O(\e)$ asymptotic rates are observed in one and two dimensions respectively. Theorem \ref{Thm_Main_Thm}, however, suggests an $O(\e^{-5})$ asymptotic error which is not seen in the Figure. This is due to the fact that the error bound in Theorem \ref{Thm_Main_Thm} is not sharp. But note that the right asymptotic rates are recovered upon choosing $\eta = \e^{1-\beta}$. 
\end{Remark}
}

\section{Analysis}
\label{Analysis_Sec}
\subsection{Simplifications}
We start by some simplifications for the analysis. Let $\br_0$ represent a fixed but arbitrary point in the domain $\Omega$. We consider the micro-problem \eqref{Intro_Micro_Problem}, centred at $\br_0$ ($\Omega_{\br_0,\eta} = \br_0 + {\color{blue} [-L_{\eta}, L_{\eta}]^{d}}$), with initial data
$$
\hat{u}(\bx) = \bs \cdot (\bx- \br_0),
$$
where $\bs = \nabla \hat{u}{\color{orange} (\br_0)} \in \mathbb{R}^{d}$ is the slope vector. Note that the constant term $c_0$ introduced in Section \ref{HMM_Sec} is removed from the notation as constant initial data would have zero contribution to the flux. {\color{blue} The starting point of the analysis is to set the local problem \eqref{Intro_Micro_Problem} on $\mathbb{R}^{d}$ and (without loss of generality) shift the micro problem to the origin. }
\begin{align} \label{Analysis_MicroProblem_MultiD}
\partial_{tt} \ueeta(t,\bx) &= \nabla \cdot \left( A(\bx+\br_0 ,(\bx+\br_0)/\e ) \nabla \ueeta(t,\bx)\right),  \text{ in } \mathbb{R}^{d} \times (0,\tau/2], \\
\ueeta(0,\bx) &= \bs \cdot \bx , \quad \partial_t \ueeta(0,\bx) = 0. \nonumber
\end{align}
{\color{blue}  Posing the micro-problem \eqref{Analysis_MicroProblem_MultiD} over $\mathbb{R}^{d}$ is only to simplify the analysis, and does not affect the computational results. This is due to the finite speed of propagation of waves; namely the solution (of the local mixed initial-boundary value problem \eqref{Intro_Micro_Problem}) in the interior region $[0,\tau/2] \times [-\eta/2,\eta/2]^{d}$, which is needed in the upscaling step \eqref{Intro_HMM_Flux},  will be the same as the solution of the corresponding Cauchy problem \eqref{Analysis_MicroProblem_MultiD} since the interior solution will not  be influenced by the periodic conditions of the local problem \eqref{Intro_Micro_Problem} if $L_{\eta} \geq \eta/2 + \tau/2 \sqrt{|A|_{\infty}}$, see \cite{Arjmand_Runborg_1} for more discussions and numerical results.}
Moreover, we rewrite the flux \eqref{Intro_HMM_Flux} as
\begin{equation}\label{Analysis_HMM_Flux}
\bF(\br_0) = \int_{-\tau}^{\tau}\int_{\Omega_{0,\eta}} K_{\eta}(\bx) K_{\tau}(t) A(\bx+\br_0,\bx/\e+\br_0/\e) \nabla \ueeta(t,\bx) \; d\bx \; dt. 
\end{equation} 
We introduce also
$$
A_{\br_0,\gamma}(\bx,\bx/\e)  := A(\bx+\br_0,\bx/\e + \gamma). 
$$
As $A$ is periodic in the second argument, we can replace the coefficient $A$ in \eqref{Analysis_MicroProblem_MultiD} by $A_{\br_0,\gamma}$ evaluated at $\gamma =  \{ \br_0/\e \}$, where $\{ \ba  \}$ denotes the fractional part of $\ba \in \mathbb{R}^{d}$. 

{\color{orange} \subsection{Outline of the analysis}

\begin{itemize}
\item[Step 1.] \emph{Expansion:} In this part, the solution is scaled as $u^{\e,\eta}(t,\bx) = \e v(t/\e,\bx/\e)$, and an asymptotic expansion is used to express $v$ as
$$
v(t,\bx) \approx  v_{0}(t,\bx) + \e v_{1}(t,\bx)  + \dfrac{\e^2}{2} v_{2}(t,\bx) + \ldots.
$$ 
The main result of the section is to estimate the difference between $\tilde{u}_1^{\e,\eta} :=  \e v_0(t/\e,\bx/\e) + \e^{2}v_1(t/\e,\bx/\e)$, and $u^{\e,\eta}$ the solution of the micro problem \eqref{Analysis_MicroProblem_MultiD}.
\item[Step 2.] \emph{Quasi-polynomials:} The main aim in this part is to write $v_0(t,\bx)$ and $v_{1}(t,\bx)$ in terms of simpler (periodic) functions. Namely, we show that 
$$
v_0(t,\by) = \bs \cdot \by  + v_{00}(t,\by), \text{ and }  v_{1}(t,\by)  = v_{10}(t,\by)  + \sum_{j=1}^{d} y_j v_{11j}(t,\by),
$$
where $v_{00}(t,\cdot), v_{10}(t,\cdot), \{ v_{11j}(t,\cdot) \}_{j=1}^{d}$ are periodic functions on $Y:=(0,1]^d$.
\item[Step 3.] \emph{Energy estimates:}  This part includes energy estimates for elliptic and second order hyperbolic PDEs. The results of this section is used later in Step 4.
\item[Step 4.] \emph{Time averages:} In the upscaling step \eqref{Intro_HMM_Flux}, the averaging operator in time can be treated separately from the spatial averaging operator due to linearity, see Subsection \ref{SubSec_TimeAverages}. The main aim is to write down explicit equations for the temporally averaged quantities defined by $d_{00}(\by):= K_{\tau} \ast v_{00}(\cdot/\e,\by)$, and for $d_{11j}$, and $d_{10}$ (which are defined similarly).  
\item[Step 5.] \emph{Decomposition of the flux:} In this part, the HMM flux \eqref{Intro_HMM_Flux} is decomposed as 
\begin{align*} 
\bF(\br_0) &= \left( \mathcal{K}_{\tau,\eta} \ast A(\cdot,\cdot/\e)\nabla\tilde{u}^{\e,\eta}(\cdot,\cdot) \right)(0,0) + \underbrace{\left( \mathcal{K}_{\tau,\eta} \ast A(\cdot,\cdot/\e)\left( \nabla u^{\e,\eta}-\nabla\tilde{u}_1^{\e,\eta} \right) \right)(0,0)}_{\mathcal{E}_{tail}}  \nonumber, 
\end{align*}  
and the HMM flux is expressed as $\bF = \bF_{0} + \e \bF_{1} + \delta + \mathcal{E}_{tail}$, where 
\begin{align*}
\bF_{0}(\br_0) &= \left( K_{\eta} \ast  A(\cdot,\cdot /\e)\left(\bs  + \nabla_\by d_{00}(\cdot/\e) \right) \right) (0) \\
\bF_{1}(\br_0) &= \left( K_{\eta} \ast  A(\cdot,\cdot /\e)\left(\nabla_\by d_{10}(\cdot/\e)  + d_{11}(\cdot/\e)\right) \right)(0) \\
\delta(\br_0) &= \left(  K_{\eta}\ast \sum_{j=1}^{d}x_{j} A(\cdot,\cdot/\e) \nabla_\by d_{11j}(\cdot/\e) \right)(0).
\end{align*}
Moreover, the result from Step 2 is used to estimate the tail $\mathcal{E}_{tail}$.
\item[Step 6.] \emph{The main proof}: Here it is proved that $|\bF - \bF_{0}| \leq C (\e/\eta)^{q}$, and that the terms $\e \bF_{1}$ and $\delta$ are bounded and small, and the final estimate is obtained. It is also proved that in one-dimension the result $|\bF_{1}| \leq C (\e/\eta)^{q-1}$ holds; explaining the one-dimensional effect seen in the numerics.
\end{itemize}

}
\subsection{Expansion}\label{SubSec_Expansion}
We consider now the micro problem \eqref{Analysis_MicroProblem_MultiD} {\color{blue} which is} posed over $\mathbb{R}^d$. By the scaling $u^{\e,\eta}(\e t,\e \bx) := \e v(t,\bx;\e,\br_0,\gamma)$ we have
\begin{align} \label{eqn_ScaledMainProblem_2D}
\partial_{tt} v(t,\bx;\e,\br_0,\gamma) = \nabla \cdot \left( A_{\br_0,\gamma}(\e \bx,\bx) \nabla v(t,\bx;\e,\br_0,\gamma)\right), \\
v(0,\bx;\e,\br_0,\gamma) = \bs \cdot \bx, \quad \partial_t v(0,\bx;\e,\br_0,\gamma) = 0. \nonumber
\end{align}
Now we define $v_{k}(t,\bx;\br_0,\gamma):=\partial_{\e}^{k} v(t,\bx;0,\br_0,\gamma)$, and consider the expansion
\begin{equation}\label{Asymptotic_Expansion_2D}
v(t,\bx;\e,\br_0,\gamma) =  v_{0}(t,\bx;\br_0,\gamma) + \e v_{1}(t,\bx;\br_0,\gamma)  + \dfrac{\e^2}{2} v_{2}(t,\bx;\br_0,\gamma) {\color{red} + R(t,\bx;\e,\br_0,\gamma).}
\end{equation}
For $k=0$, we have
\begin{align} 
\partial_{tt} v_{0}(t,\bx;\br_0,\gamma) = \nabla \cdot \left( A_{\br_0,\gamma}(0,\bx) \nabla v_{0}(t,\bx;\br_0,\gamma)\right), \nonumber \\
v_{0}(0,\bx;\br_0,\gamma) = \bs \cdot \bx, \quad \partial_t v_{0}(0,\bx;\br_0,\gamma) = 0. \label{Eq_v0}
\end{align}
Let $v_0 = \bs \cdot \bx  + v_{00}(t,\bx;\br_0,\gamma)$, then
\begin{align} 
\partial_{tt} v_{00}(t,\bx;\br_0,\gamma) &= \nabla \cdot \left( A_{\br_0,\gamma}(0,\bx) \nabla v_{00}(t,\bx;\br_0,\gamma)\right) +  \nabla \cdot A_{\br_0,\gamma}(0,\bx) \bs  \label{Eq_v00} \\
v_{00}(0,\bx;\br_0,\gamma) &= \partial_t v_{00}(0,\bx;\br_0,\gamma) = 0. \nonumber
\end{align}
For the higher order terms we have the following relation
\begin{align} \label{Eq_vm}
\partial_{tt} v_{m}(t,\bx;\br_0,\gamma) &= \nabla \cdot \left(A_{\br_0,\gamma}(0,\bx) \nabla  v_{m}(t,\bx;\br_0,\gamma) \right) + G_{m}(t,\bx;\br_0,\gamma) \nonumber \\
v_{m}(0,\bx;\br_0,\gamma) &= \partial_t v_{m}(0,\bx;\br_0,\gamma) = 0, \quad m>0,
\end{align}
where $G_m$ is, with $c_{jm} = \dbinom{m}{j}$,
\begin{equation*}
G_m(t,\bx;\br_0,\gamma) = \nabla \cdot \left( \sum_{j=0}^{m-1} c_{jm}\left( \partial_{\e}^{m-j} A_{\br_0,\gamma}(\e \bx , \bx)\right)|_{\e =0}  \nabla v_j(t,\bx;\br_0,\gamma)  \right).
\end{equation*}

\begin{figure}[ht]
  \centering 
    \includegraphics[width=0.55\textwidth]{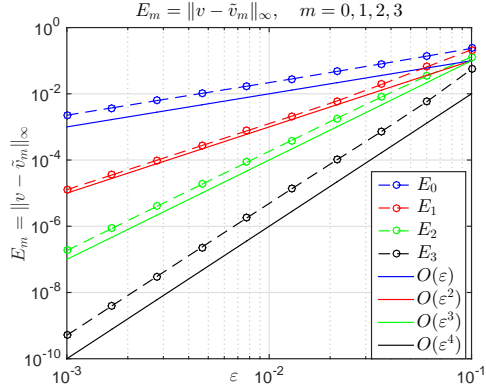}
    \caption{Convergence as $\e \longrightarrow 0$ of the truncated expansion $\tilde{v}_m = \sum_{k=1}^{m} \frac{\e^k}{k!} v_k$. The problem \eqref{Eq_vm} is solved for $m=0,1,2,3$ in one-dimension with $A(x,y) = 1.1 + 0.5 (\sin(2 \pi x)  + \sin(2\pi y))$ in the domain $(t,x) \in [0,1] \times [-3,3]$ with periodic boundary conditions. The error is defined as $E_m = \max_{(t,x) \in [0,1] \times [-1.5,1.5]} | v - \tilde{v}_m|$. } \label{Fig_VmConvergence}
\end{figure}
\begin{Remark} We drop $\br_0$ and $\gamma$ in the notation for $A_{\br_0,\gamma}(0,\by)$, $v_{j}(t,\by;\br_0,\gamma)$ and $v_{jk}(t,\by;\br_0,\gamma)$ and simply write $A(0,\by), v_j(t,\by)$ and $v_{jk}(t,\by)$. Moreover, in addition to the assumptions on $A$ given in the introduction, we assume that $\br_0$ belongs to the compact set $\overline{\Omega}$ so that all the constants are uniform in $\br_0$.
\end{Remark}

Now we present a theorem to estimate the tail of the expansion in \eqref{Asymptotic_Expansion_2D}. 
\begin{Theorem}\label{Truncated_Convergence_Thm} Assume that $\Omega_{L} = [-L,L]^d$, and that $A \in C^{\infty}(\Omega_L \times Y)$. Let $v(t,\bx;\varepsilon)$ be the solution of \eqref{eqn_ScaledMainProblem_2D} and $\tilde{v}_m(t,\bx;\varepsilon)$ be defined by

\begin{equation}\label{Eqn_Truncated_vm}
\tilde{v}_m(t,\bx;\varepsilon)  = \sum_{k=0}^m \frac{\varepsilon^k}{k!} v_k(t,\bx),
\end{equation}
where $v_k$ is given by \eqref{Eq_v0} and \eqref{Eq_vm} for $k=0$ and $k>1$ respectively. Then for any $M > L  + t \sqrt{ \left| A \right|_{\infty}}$
\begin{equation*}
\begin{array}{ll}
\| \nabla \left( v -\tilde{v}_m\right)(t,\cdot;\e) \|_{L^2(\Omega_L)} &\leq \displaystyle C \varepsilon^{m+1} \sum_{j=0}^{m} M^{m-j} \left(1 + M \right) t \max_{|z|\leq t} \left\| v_{j}(z,\cdot) \right\|_{H^2(\Omega_M)},
\end{array}
\end{equation*}
where $C$ does not depend on $t,\e$ and $M$ but may depend on $A,m,d$. Moreover, for $m=1$, the above estimate gives

\begin{equation} \label{Eqn_Tail_Estimate_v_m_1}
\begin{array}{ll}
\| \nabla \left( v -\tilde{v}_1\right)(t,\cdot;\e)  \|_{L^2(\Omega_L)} &\leq C \varepsilon^{2} \left(  1+ M^{d/2+1} \right) \left( 1 + M^{2} \right) \left( 1 + t^4 \right)  \left| \bs \right|_{\infty}.    
\end{array}
\end{equation}
\end{Theorem}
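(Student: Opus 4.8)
\emph{Strategy.} The plan is to work directly with the residual $R := v - \tilde v_m$. Because $v$ and every $v_k$ share the Cauchy data (all carry the datum $\bs\cdot\bx$ at order zero in $\e$ and $0$ otherwise), the residual has \emph{zero} initial data, and it solves the \emph{same} hyperbolic operator as $v$ itself:
\[
\partial_{tt} R - \nabla\cdot\bigl(A^{\e}\nabla R\bigr) = S, \qquad R(0,\cdot)=\partial_t R(0,\cdot)=0,
\]
where $A^{\e}(\bx):=A_{\br_0,\gamma}(\e\bx,\bx)$ and $S$ is the forcing produced by inserting $\tilde v_m$ into the equation for $v$ and subtracting the equations \eqref{Eq_v0}, \eqref{Eq_vm} satisfied by the $v_k$. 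The theorem then splits into two essentially independent tasks: (i) show that $S$ is of size $\e^{m+1}$ with the stated polynomial dependence on $M$, and (ii) convert an $L^2$ bound on $S$ into the claimed bound on $\nabla R$ over $\Omega_L$.

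\emph{The forcing term.} Writing $A_l(\bx):=\partial_\e^l A^{\e}(\bx)|_{\e=0}$, one has $S=\nabla\cdot\bigl((A^{\e}-A_0)\nabla\tilde v_m\bigr)-\sum_{k=0}^m\tfrac{\e^k}{k!}G_k$. The key algebraic observation I would exploit is that, after expanding $\nabla\tilde v_m=\sum_j\tfrac{\e^j}{j!}\nabla v_j$ and matching the $G_k$ against the Taylor coefficients of $A^{\e}$, the whole expression regroups cleanly by the index $j$ of $v_j$:
\[
S = \sum_{j=0}^{m}\frac{\e^{j}}{j!}\,\nabla\cdot\bigl(B_j^{\e}\,\nabla v_j\bigr),\qquad B_j^{\e}:=A^{\e}-\sum_{l=0}^{m-j}\frac{\e^{l}}{l!}A_l,
\]
so that $B_j^{\e}$ is precisely the Taylor remainder of $A^{\e}$ after $m-j$ terms. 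Since $A^{\e}(\bx)=A(\e\bx+\br_0,\bx+\gamma)$, the chain rule shows $A_l$ grows like $|\bx|^{l}$, whence $B_j^{\e}$ and its first spatial derivatives are $O(\e^{m-j+1}M^{m-j+1})$ on $\Omega_M$. Using the smoothness of $A$ and $H^2$-control of $v_j$ to absorb the second-order operator, this gives $\|S(s,\cdot)\|_{L^2(\Omega_M)}\le C\,\e^{m+1}\sum_{j=0}^{m}M^{m-j}(1+M)\,\|v_j(s,\cdot)\|_{H^2(\Omega_M)}$, which is exactly the profile appearing in the theorem. The per-$j$ grouping is what makes the powers of $M$ come out correctly; a uniform Taylor expansion to order $m+1$ would overcount them.

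\emph{Energy estimate and finite speed.} This is the step I expect to be the main obstacle. The forcing $S$, as well as the functions $v_j$, grow polynomially in $\bx$ (already the leading datum $\bs\cdot\bx$ does), so no global $L^2(\mathbb{R}^d)$ energy estimate is available. Instead I would use the finite speed of propagation for $\partial_{tt}-\nabla\cdot(A^{\e}\nabla\,\cdot\,)$, whose characteristic speed is bounded by $\sqrt{|A|_\infty}$: the backward light cone issuing from $\Omega_L$ at time $t$ has all its time-slices contained in $\Omega_{L+t\sqrt{|A|_\infty}}\subseteq\Omega_M$, which is exactly the hypothesis $M>L+t\sqrt{|A|_\infty}$. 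A localized energy inequality over this shrinking cone (the hyperbolic energy estimate of Step~3), combined with the coercivity $A^{\e}\ge c_1 I$, then yields $\|\nabla R(t,\cdot)\|_{L^2(\Omega_L)}\le C\int_0^t\|S(s,\cdot)\|_{L^2(\Omega_M)}\,ds\le C\,t\max_{|z|\le t}\|S(z,\cdot)\|_{L^2(\Omega_M)}$. Inserting the bound on $\|S\|_{L^2(\Omega_M)}$ from the previous paragraph produces the general estimate, the factor $t$ arising from the time integration and the $\max_{|z|\le t}$ from taking the supremum of the source in time.

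\emph{The case $m=1$.} Finally, \eqref{Eqn_Tail_Estimate_v_m_1} follows by substituting the structural bounds for $\|v_0\|_{H^2(\Omega_M)}$ and $\|v_1\|_{H^2(\Omega_M)}$ into the $m=1$ instance of the general estimate. Here I would use the quasi-polynomial decomposition (Step~2): $v_0=\bs\cdot\bx+v_{00}$ with $v_{00}(t,\cdot)$ being $Y$-periodic, since the frozen coefficient $A_0(\bx)=A(\br_0,\bx+\gamma)$ and the forcing $\nabla\cdot(A_0\bs)$ are $Y$-periodic. Consequently the $H^2(\Omega_M)$ norm of $v_0$ is a per-cell quantity integrated over $\sim M^d$ cells, giving a factor $(1+M^{d/2})$ times a polynomial-in-$t$ factor controlled by the energy estimates; $v_1$, forced through $v_0$ and carrying the extra linear prefactor $\sum_j x_j v_{11j}$, gains one more power of $M$ and of $t$. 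Collecting these powers of $M$ and $t$ against the $M(1+M)t$ and $(1+M)t$ weights of the $m=1$ bound reproduces $(1+M^{d/2+1})(1+M^2)(1+t^4)|\bs|_\infty$; this last part is routine bookkeeping once the structural and energy bounds are in hand.
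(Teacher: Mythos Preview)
Your proposal is correct and follows essentially the same approach as the paper: both derive the error equation $\partial_{tt}R-\nabla\cdot(A^{\e}\nabla R)=S$ with zero Cauchy data and identify the forcing as $S=\sum_{j=0}^{m}\tfrac{\e^{j}}{j!}\nabla\cdot(B_j^{\e}\nabla v_j)$ where $B_j^{\e}$ is the Taylor remainder of $A^{\e}$ of order $m-j$, then bound $\|S\|_{L^2(\Omega_M)}$ via the remainder estimates and feed this into a localized (finite-speed) energy inequality over the backward cone, with the $m=1$ case handled through the quasi-polynomial structure of $v_0,v_1$. The paper carries out the algebraic regrouping by first computing $\partial_{tt}\tilde v_m$ directly and recognizing $\sum_j\tfrac{\e^j}{j!}\tilde A_{m-j}(\e\bx,\bx)\nabla v_j$, which is the same identity you arrive at from the other direction.
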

\begin{proof} See the appendix.
\end{proof}
To illustrate the result of Theorem \ref{Truncated_Convergence_Thm}, we present a numerical test in Figure \ref{Fig_VmConvergence}, which shows the claimed rate of convergence. Note that $M$ and the final time $t$ are fixed in the simulation.
\begin{Corollary} \label{Lemma_Tail_Estimate} Let $\ueeta(t,\bx)$ be the microscopic solution solving \eqref{Analysis_MicroProblem_MultiD} and let $\tilde{u}^{\e,\eta}_1(t,\bx)$ $:=\e \tilde{v}_1(t/\e,\bx/\e;\e)$, where $\tilde{v}_1$ is given by \eqref{Eqn_Truncated_vm} with $m=1$. Moreover, assume that $\e \leq \tau$ and $\e \leq \eta$. Then, with $e^{\e,\eta}(t,\bx):=u^{\e,\eta}(t,\bx)  - \tilde{u}^{\e,\eta}_1(t,\bx)$, the estimate \eqref{Eqn_Tail_Estimate_v_m_1}  implies
\begin{equation*}
\begin{array}{ll}
\displaystyle \sup_{t \in (0,\tau]} \| \nabla_{\bx} e^{\e,\eta}(t,\cdot) \|_{L^2(\Omega_\eta)} &\leq C \varepsilon^{2+d/2} \left( \left( \dfrac{\eta}{\e}\right)^{d/2+3} \left( \dfrac{\tau}{\e}\right)^{4}  + \left( \dfrac{\eta}{\e}\right)^{d/2+7} \right) \left| \bs \right|_{\infty},
\end{array}
\end{equation*}
where $C$ does not depend on $\e,\eta,\tau$ but may depend on $A$ and $d$. 
\end{Corollary}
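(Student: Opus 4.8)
The plan is to treat the corollary as a pure scaling reduction of estimate \eqref{Eqn_Tail_Estimate_v_m_1}; no analysis beyond Theorem \ref{Truncated_Convergence_Thm} is needed. First I would record how the error and its gradient transform under the scaling $\ueeta(t,\bx) = \e v(t/\e,\bx/\e;\e)$ introduced in \eqref{eqn_ScaledMainProblem_2D}. Since $\tilde{u}^{\e,\eta}_1(t,\bx) = \e\,\tilde{v}_1(t/\e,\bx/\e;\e)$ by definition, the error is $e^{\e,\eta}(t,\bx) = \e\,(v-\tilde{v}_1)(t/\e,\bx/\e;\e)$, and the chain rule gives $\nabla_{\bx} e^{\e,\eta}(t,\bx) = (\nabla(v-\tilde{v}_1))(t/\e,\bx/\e;\e)$, the prefactor $\e$ and the $1/\e$ from differentiating the argument cancelling exactly. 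This is precisely why the final bound carries the factor $\e^{2}$ inherited from \eqref{Eqn_Tail_Estimate_v_m_1} rather than $\e^{3}$.

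Next I would pass the spatial $L^2$ norm to the fast variable. Substituting $\by = \bx/\e$ in $\|\nabla_{\bx} e^{\e,\eta}(t,\cdot)\|_{L^2(\Omega_\eta)}^2$ introduces the Jacobian $\e^{d}$ and rescales the integration domain from $\Omega_\eta$ (side $\sim \eta$) to a box of radius $\sim \eta/\e$, while the time argument becomes $t/\e$. Hence $\|\nabla_{\bx} e^{\e,\eta}(t,\cdot)\|_{L^2(\Omega_\eta)} = \e^{d/2}\,\|\nabla(v-\tilde{v}_1)(t/\e,\cdot;\e)\|_{L^2(\Omega_{\eta/\e})}$, which already accounts for the leading power $\e^{2+d/2}$ once \eqref{Eqn_Tail_Estimate_v_m_1} is applied. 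I would note in passing that this $\e^{d/2}$ is exactly the factor later cancelled by the $O(\eta^{-d/2})$ $L^2$-mass of the averaging kernel $K_\eta$ when the bound is fed into the flux \eqref{Analysis_HMM_Flux}.

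Now I would invoke \eqref{Eqn_Tail_Estimate_v_m_1} with $L = \eta/\e$ and final time $t/\e \le \tau/\e$, choosing the outer radius $M$ so that the domain of dependence of $\Omega_{\eta/\e}$ up to time $t/\e$ lies inside $\Omega_M$, i.e. $M > \eta/\e + (t/\e)\sqrt{|A|_\infty}$, which is legitimate by finite speed of propagation — the same device used to pose \eqref{Analysis_MicroProblem_MultiD} over $\mathbb{R}^d$. In the regime $\tau \le \eta$ of interest (and $\tau = \eta$ in Theorem \ref{Thm_Main_Thm}) one may simply take $M \sim \eta/\e$. Substituting and taking $\sup_{t \in (0,\tau]}$ — the right-hand side is monotone in $t$, so the supremum is attained at $t = \tau$ and produces the factor $1 + (\tau/\e)^4$ — I would collapse $(1+M^{d/2+1})(1+M^2) \le C(\eta/\e)^{d/2+3}$ using $\eta/\e \ge 1$. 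Collecting $\e^{d/2}\cdot \e^2 \cdot (\eta/\e)^{d/2+3}\cdot (1 + (\tau/\e)^4)$ and using $\e \le \eta$, $\e \le \tau$ then yields the two displayed terms $(\eta/\e)^{d/2+3}(\tau/\e)^4$ and $(\eta/\e)^{d/2+7}$, the latter absorbing the baseline contribution once $\tau \le \eta$.

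The genuinely substantive work sits entirely in Theorem \ref{Truncated_Convergence_Thm}, whose proof is deferred to the appendix; the corollary itself is algebraic. Accordingly, the only steps requiring care are the choice of $M$ and the accompanying bookkeeping: one must respect the domain-of-dependence constraint $M > \eta/\e + (t/\e)\sqrt{|A|_\infty}$ uniformly in $t \le \tau$ while still extracting a clean power of $\eta/\e$, and one must track the cancellation of $\e$-powers in the gradient scaling so that the exponent $2 + d/2$ comes out correctly. I expect the mild loss of sharpness — using $\e \le \eta$, $\e \le \tau$, and effectively $\tau \le \eta$ to merge the subleading contributions into the $(\eta/\e)^{d/2+7}$ term — to be the only place where a choice, rather than a forced computation, enters.
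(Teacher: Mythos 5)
Your proposal is correct and follows essentially the same route as the paper's proof: cancel the $\e$ prefactor against the chain-rule factor $1/\e$, change variables $\by=\bx/\e$ to pick up $\e^{d/2}$ and rescale the domain to $\Omega_{\eta/\e}$, then apply estimate \eqref{Eqn_Tail_Estimate_v_m_1} with final time $\tau/\e$ and $M\sim \eta/\e$. If anything, you are more careful than the paper on the one terse point: the paper simply sets $M=\alpha^{-1}$, which taken literally conflicts with the hypothesis $M > L + t\sqrt{|A|_{\infty}}$ of Theorem \ref{Truncated_Convergence_Thm} (since $L=\alpha^{-1}$ there), whereas you choose $M$ respecting the domain-of-dependence constraint uniformly in $t\leq\tau$ (using $\tau\lesssim\eta$, as in the application $\tau=\eta$) before collapsing the powers of $\eta/\e$.
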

\begin{proof} Let $e := v  - \tilde{v}_1$ and $e^{\e,\eta} =  u^{\e,\eta}  - \tilde{u}^{\e}_1$, then $e^{\e,\eta}(t,\bx) = \e e(t/\e,\bx/\e;\e)$ and with $\by=\bx/\e$ the chain rule gives
$$
\nabla e^{\e,\eta}(t,\bx) = \nabla_\by e(t/\e, \bx/\e).
$$
Then with $\alpha = \e/\eta$ we get
\begin{align*}
\sup_{t \in \left( 0,\tau \right]} \| \nabla e^{\e,\eta} \|_{L^2(\Omega_{\eta})}^{2} &= \sup_{t \in \left( 0,\tau \right]} \int_{\Omega_{\eta}} \left| \nabla e^{\e,\eta}(t,\bx)  \right|^2 d\bx =  \sup_{t \in \left( 0,\tau \right]} \int_{\Omega_{\eta}} \left| \nabla_\by e(t/\e,\bx/\e)  \right|^2 d\bx \\
& \hspace{-1cm}  = \e^{d}  \sup_{t \in \left( 0,\tau \right]} \int_{\Omega_{\alpha^{-1}}} \left| \nabla_\by e(t/\e,\by)  \right|^2 d\by  =  \e^{d}  {\color{red} \sup_{t \in \left( 0,\tau/\e \right]}  \left\| \nabla_{\by} e(t,\cdot)\right\|_{L^2(\Omega_{\alpha^{-1}})}^2.}
\end{align*}
The final result follows by exploiting the last inequality and by putting $t=\tau/\e$ and $M = \alpha^{-1}$ in estimate \eqref{Eqn_Tail_Estimate_v_m_1}. 
\end{proof}
\subsection{Quasi-polynomials}
From an analysis point of view it is desirable to deal with purely periodic functions. Unfortunately, the terms $v_{m}$ in \eqref{Eq_vm} are not periodic. However, they possess a nice structure known as the \emph{quasi-polynomials} where the coefficients of usual polynomials are replaced by $Y$-periodic smooth functions.  
\begin{Definition}\label{Def_QuasiPolynomials} A function $P(\bx,\by): \mathbb{R^{d} \times \mathbb{R}^{d} \longrightarrow \mathbb{R}}$ belongs to the set $\mathbb{P}_n$ of quasi-polynomials of degree $n$ if
$$
P(\bx,\by) = \sum_{|\beta| \leq n} p_{\beta}(\by) \bx^{\beta},
$$
where $\beta$ represents a multi-index so that $\bx^{\beta} = x_{1}^{\beta_1} x_{2}^{\beta_2} \cdots x_{d}^{\beta_d}$, $|\beta| = \sum_{j=1}^{d} \beta_j $, and $P_{\beta} \in C^{\infty}(Y)$ are infinitely differentiable $1$-periodic functions, named the coefficient functions of $P$. 
\end{Definition}
Now we will state a lemma which shows that, in general, periodic wave equations with quasi-polynomial data have quasi-polynomial solutions. For this let us define $Q,Z,F(t,\cdot,\cdot) \in \mathbb{P}_n$ so that
$$
Q(\bx,\by) =  \sum_{|\beta| \leq n} q_{\beta}(\by) \bx^{\beta}, \quad Z(\bx,\by)= \sum_{|\beta|\leq n} z_{\beta}(\by) \bx^{\beta}, \quad F(t,\bx,\by) = \sum_{|\beta|\leq n} f_{\beta}(t,\by) \bx^{\beta}, 
$$
and consider the wave equation {\color{red} with a uniformly elliptic and bounded periodic coefficient $B \in C^{\infty}(Y)$:}
\begin{align} \label{Eq_u}
\partial_{tt} u = \nabla \cdot \left(B(\bx) \nabla  u(t,\bx) \right) + F(t,\bx,\bx) \nonumber \\
u(0,\bx) = Q(\bx,\bx), \quad \partial_t u(0,\bx) = Z(\bx,\bx).
\end{align}
{\color{red} Note that the solution $u$ is not bounded in usual $H^1$ spaces (not a weak solution). However, it is a classical solution since it satisfies the wave equation pointwise everywhere.} The following lemma states that the solution $u$ is a quasi-polynomial of degree $n$ as well. 
\begin{Lemma}\label{Lemma_QuasiPolynomials} There is a family of quasi-polynomials $U(t,\cdot,\cdot) \in \mathbb{P}_n$ such that the solution to \eqref{Eq_u} is given as $u(t,\bx) = U(t,\bx,\bx) = \sum_{|\beta|\leq n} u_{\beta}(t,\bx) \bx^{\beta} $. The coefficient functions of $U$ solve the forced periodic wave equations
 \begin{align*} 
\partial_{tt} u_{\beta}(t,\bx) = \nabla \cdot \left(B(\bx) \nabla  u_{\beta}(t,\bx) \right) + p_{\beta} + f_{\beta} \nonumber \\
u_{\beta}(0,\bx) = q_{\beta}, \quad \partial_t u_{\beta}(0,\bx) = z_{\beta},
\end{align*}
where $f_{\beta}, q_{\beta}, z_{\beta}$ are the coefficient functions of $F,Q,Z,$ and 
\begin{equation*}
p_{\beta}(t,\bx) =  
\begin{cases}
0, & |\beta | = n, \\
M[u_{\beta +1}], & |\beta | = n-1, \\
M[u_{\beta + 1}] + N[u_{\beta + 2}], & |\beta | \leq n-2,
\end{cases}
\end{equation*}
where {\color{orange} (with $\{ e_j \}_{j=1}^{d}$ being the standard canonical basis vectors in $\mathbb{R}^d$)}
\begin{align*}
M[u_{\beta +1 }] &:= \sum_{j=1}^{d} \left( \beta_j + 1 \right) \left( \nabla \cdot \left( B e_j u_{\beta + e_j} \right) + e_j^T B \nabla u_{\beta + e_j} \right) \\
N[u_{\beta +2}] &:= \sum_{i,j=1}^{d} \left( \beta_i + 1 \right)\left( \beta_j + 1 \right) e_{i}^T B e_j u_{\beta + e_i + e_j} \left(1 - \delta_{ij} \right) \\ &+ \sum_{i=1}^{d} \left( \beta_i + 1 \right)\left( \beta_i + 2 \right) e_{i}^T B e_i u_{\beta + 2 e_i}.
\end{align*}
\end{Lemma}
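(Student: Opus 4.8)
The plan is to prove Lemma \ref{Lemma_QuasiPolynomials} by substituting the quasi-polynomial ansatz $u(t,\bx) = \sum_{|\beta| \le n} u_\beta(t,\bx) \bx^\beta$ directly into the wave equation \eqref{Eq_u} and matching coefficients of the monomials $\bx^\beta$. The key observation is that when the spatial operator $\nabla \cdot (B(\bx) \nabla \cdot)$ hits a product $u_\beta(t,\bx) \bx^\beta$, the product rule produces a term where all derivatives land on $u_\beta$ (giving back $\nabla \cdot (B \nabla u_\beta)\, \bx^\beta$), plus \emph{lower-degree} monomials coming from derivatives that fall on the polynomial factor $\bx^\beta$. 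These extra contributions are exactly what the operators $M[\cdot]$ and $N[\cdot]$ encode: one derivative on $\bx^\beta$ lowers the degree by one and produces $M$, while two derivatives lower it by two and produce $N$. Since $\nabla \cdot (B \nabla \cdot)$ is second order, no terms of degree lower than $|\beta| - 2$ appear, which is why the forcing $p_\beta$ involves only $u_{\beta+1}$ and $u_{\beta+2}$.

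First I would carry out the Leibniz computation carefully. Writing $\partial_{x_i}(u_\gamma \bx^\gamma) = (\partial_{x_i} u_\gamma)\bx^\gamma + \gamma_i u_\gamma \bx^{\gamma - e_i}$ and applying it twice through $\nabla \cdot (B \nabla \cdot)$, I would collect, for each fixed target multi-index $\beta$, all contributions landing on the monomial $\bx^\beta$. The diagonal contribution from $u_\beta$ itself gives the principal operator $\nabla \cdot (B \nabla u_\beta)$. The contributions in which exactly one derivative strikes the polynomial factor come from $u_{\beta + e_j}$ (so that lowering by $e_j$ lands on $\bx^\beta$), and summing the two ways the single polynomial-derivative can arise — either from the outer divergence acting on $B e_j u_{\beta+e_j}$, or from $e_j^T B \nabla u_{\beta+e_j}$ — reproduces $M[u_{\beta+1}]$ with the combinatorial factor $(\beta_j + 1)$. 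The contributions where both derivatives strike the polynomial come from $u_{\beta + e_i + e_j}$ and yield the quadratic expression $N[u_{\beta+2}]$; here I would keep track of the distinction between the off-diagonal case $i \ne j$ (factor $(\beta_i+1)(\beta_j+1)$ with the $(1-\delta_{ij})$ guard) and the diagonal case $i = j$ (factor $(\beta_i+1)(\beta_i+2)$ arising because $\partial_{x_i}^2 \bx^{\beta + 2e_i} = (\beta_i+1)(\beta_i+2)\bx^\beta$).

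Once the spatial operator is decomposed this way, matching coefficients of $\bx^\beta$ on both sides of \eqref{Eq_u} — including the forcing $F(t,\bx,\bx) = \sum f_\beta \bx^\beta$ on the right and the initial data $Q, Z$ — yields precisely the stated forced periodic wave equations for each $u_\beta$, with $p_\beta$ given by the case distinction on $|\beta|$: vanishing at top degree $n$, equal to $M[u_{\beta+1}]$ at degree $n-1$, and $M[u_{\beta+1}] + N[u_{\beta+2}]$ below. These equations can be solved recursively from the top degree downward, since $p_\beta$ depends only on coefficient functions of strictly higher degree; standard well-posedness of the forced periodic wave equation with smooth periodic coefficient $B$ then furnishes each $u_\beta \in C^\infty$. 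Finally I would verify that the assembled $u(t,\bx) = U(t,\bx,\bx)$ genuinely solves \eqref{Eq_u} pointwise, which is automatic by construction since the coefficient-matching is an identity in $\bx$; this confirms it is the classical solution noted in the remark preceding the lemma.

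The main obstacle I expect is purely bookkeeping rather than conceptual: correctly tracking the multi-index combinatorics through the double application of $\nabla \cdot (B \nabla \cdot)$, in particular getting the symmetric-sum factors $(\beta_i+1)(\beta_j+1)$ versus $(\beta_i+1)(\beta_i+2)$ right and ensuring the single-derivative terms split into exactly the two pieces comprising $M$. A clean way to manage this is to fix $\beta$ and ask, for each source index $\gamma \ge \beta$ with $|\gamma| - |\beta| \in \{0,1,2\}$, how many derivatives must fall on $\bx^\gamma$ to produce $\bx^\beta$ and with what coefficient; the symmetry $B_{ij} = B_{ji}$ is what lets the mixed second-derivative terms be written in the stated symmetric form.
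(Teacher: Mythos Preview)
Your proposal is correct and follows essentially the same route as the paper: both compute $L[u_\gamma(t,\bx)\bx^\gamma]$ via the Leibniz rule, reindex the resulting $\bx^{\gamma-e_j}$ and $\bx^{\gamma-e_i-e_j}$ terms to recognise the operators $M$ and $N$, and conclude that $U(t,\bx,\bx)$ satisfies \eqref{Eq_u}. Your additional remarks on the downward recursion and well-posedness of each periodic problem for $u_\beta$ make explicit what the paper leaves implicit.
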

\begin{proof} The proof of this Lemma in one-dimension was given in \cite{Arjmand_Runborg_1}. Here, we give the general proof. Let $L=: \nabla \cdot B \nabla$. We first note that
\begin{equation*}
\begin{array}{ll}
 &   L[w_{\beta}(\bx)\bx^\beta] =  \displaystyle
  \sum_{i,j=1}^d \partial_{x_i} \left( B_{ij}(\bx)\bx^\beta \partial_{x_j} w_{\beta}(x) \right) +
  \sum_{i,j =1}^d \beta_j \partial_{x_i} \left( \bx^{\beta-e_j}B_{ij}(\bx)w_{\beta}(\bx) \right) \\
   &= 
 \displaystyle   \bx^{\beta} \sum_{i,j=1}^d \partial_{x_i} \left( B_{ij}(\bx) \partial_{x_j} w_{\beta}(\bx) \right) +
\sum_{i,j= 1}^d \beta_j \bx^{\beta - e_j} \left( B_{ji} \partial_{x_i} w_{\beta}(\bx)  + \partial_{x_i} \left( B_{ij} w_{\beta}(\bx) \right)\right) \\ &+
  \displaystyle \sum_{i,j= 1}^d  \left( \left( 1 - \delta_{ij }\right) \beta_i \beta_j  + \delta_{ij} \beta_i\left(\beta_{j}-1\right)\right) \bx^{\beta - e_i - e_j} B_{ij} w_{\beta}(\bx).
\end{array}
\end{equation*}

\noindent From this it follows that
\begin{equation*}
\begin{array}{ll}
&  L[U(t,\bx,\bx)] =  \displaystyle \sum_{|\beta| \leq n} \bx^{\beta} L[u_{\beta}] + \sum_{|\beta| \leq n}  \sum_{i,j= 1}^d \beta_j \bx^{\beta - e_j} \left( B_{ji} \partial_{x_i} u_{\beta}  + \partial_{x_i} \left( B_{ij} u_{\beta} \right)\right)  \\ &+ \displaystyle  \sum_{|\beta| \leq n}  \sum_{i,j= 1}^d  \left( \left( 1 - \delta_{ij }\right) \beta_i \beta_j  + \delta_{ij} \beta_i \left(\beta_{j}-1\right)\right) \bx^{\beta - e_i - e_j} B_{ij} u_{\beta}
\\
 &=  \displaystyle \sum_{|\beta| \leq n} \bx^{\beta} L[u_{\beta}]  + \sum_{|\beta| \leq n-1}  \bx^{\beta} \sum_{j= 1}^d \left(\beta_j  + 1 \right)  \sum_{i=1}^d \left( B_{ji} \partial_{x_i} u_{\beta + e_j}  + \partial_{x_i} \left( B_{ij} u_{\beta + e_j} \right)\right) 
\\ 
 
&+ \displaystyle \sum_{|\beta| \leq n-2}   \bx^{\beta} \sum_{i,j= 1}^d  \left( \left( 1 - \delta_{ij }\right) (\beta_i+1) (\beta_j + 1)  + \delta_{ij} (\beta_i+2) \left(\beta_{i}+1\right)\right) B_{ij} u_{\beta + e_i  + e_j} \\  
&= \displaystyle \sum_{|\beta| \leq n} \bx^{\beta} L[u_{\beta}]  +  \sum_{|\beta| \leq n-1}  x^{\beta} M[u_{\beta+1}] +  \displaystyle \sum_{|\beta| \leq n-2}   \bx^{\beta} N[u_{\beta+2}] \\   &=
 \displaystyle \sum_{|\beta| \leq n} \bx^{\beta} \left(  L[u_{\beta}]  + p_{\beta} \right). 
 \end{array}
\end{equation*}

\noindent On the other hand
\begin{equation*}
  \partial_{tt} U(t,\bx,\bx) = \sum_{|\beta|\leq n}\partial_{tt} u_\beta \bx^\beta
  =\sum_{|\beta|\leq n}
  \bx^\beta(L[u_\beta] + p_\beta + f_\beta)
    = L[U] + F.
\end{equation*}
Similarly, the initial data agrees and $U(t,\bx,\bx)$ is therefore a solution.
\end{proof}

{\color{orange} We will now use Lemma \ref{Lemma_QuasiPolynomials} to express the solutions of \eqref{Eq_vm} as quasi-polynomials.} When $m=1$, equation \eqref{Eq_vm} reads  
\begin{equation}\label{eqn_v1_MultiD}
\partial_{tt} v_{1}(t,\by)  = L[v_1] + \sum_{j=1}^{d} \nabla_{\by} \cdot \left(  \left( y_j \partial_{x_j}A(0,\by) \right) \nabla_{\by} v_{0} \right). 
\end{equation}
Recall from \eqref{Eq_v0} that $v_0$ solves 
\begin{align*} 
\partial_{tt} v_{0}(t,\by) = \nabla \cdot \left( A(0,\by) \nabla v_{0}(t,\by)\right), \nonumber \\
v_{0}(0,\by) = \bs \cdot \by, \quad \partial_t v_{0}(0,\by) = 0.
\end{align*}
Since $\nabla_\by v_0 \in \mathbb{P}_0$, the forcing term of equation \eqref{eqn_v1_MultiD} will be a quasi-polynomial of degree one. Therefore by  Lemma \ref{Lemma_QuasiPolynomials} we can write
\begin{equation}\label{eqn1_Lemma_v1}
v_1(t,\by)  = v_{10}(t,\by)  +  \sum_{j=1}^{d} y_j v_{11j}(t,\by),
\end{equation}
where $v_{10}$ and $v_{11j}$ are periodic in $\by$, and
\begin{align}\label{eqn2_Lemma_v1}
\partial_{tt} v_{11k} &= L[v_{11k}]  +  f_{11k}, \quad k=1,\ldots, d \nonumber \\
\partial_{tt} v_{10} &= L[v_{10}] + M[v_{11}] + f_{10},
\end{align}
with
\begin{align}\label{eqn3_Lemma_v1}
f_{11k} =\nabla_{\by} \cdot \left( \partial_{x_k} A \bs \right)+ \nabla_{\by} \cdot \left( \left( \partial_{x_k} A \right)  \nabla_{\by} v_{00} \right)&, \quad 
f_{10} = \nabla_{\bx} \cdot A \bs  + \left( \nabla_{\bx} \cdot A \right) \cdot \nabla_\by v_{00} \nonumber \\
M[v_{11}]= \sum_{j=1}^{d} \nabla_\by \cdot \left( A e_{j} v_{11j} \right) &+ \sum_{j=1}^{d }e_{j}^{T} A \nabla_\by v_{11j}.
\end{align}

 \begin{figure}[h] 
    \centering
        \includegraphics[width=0.48\textwidth]{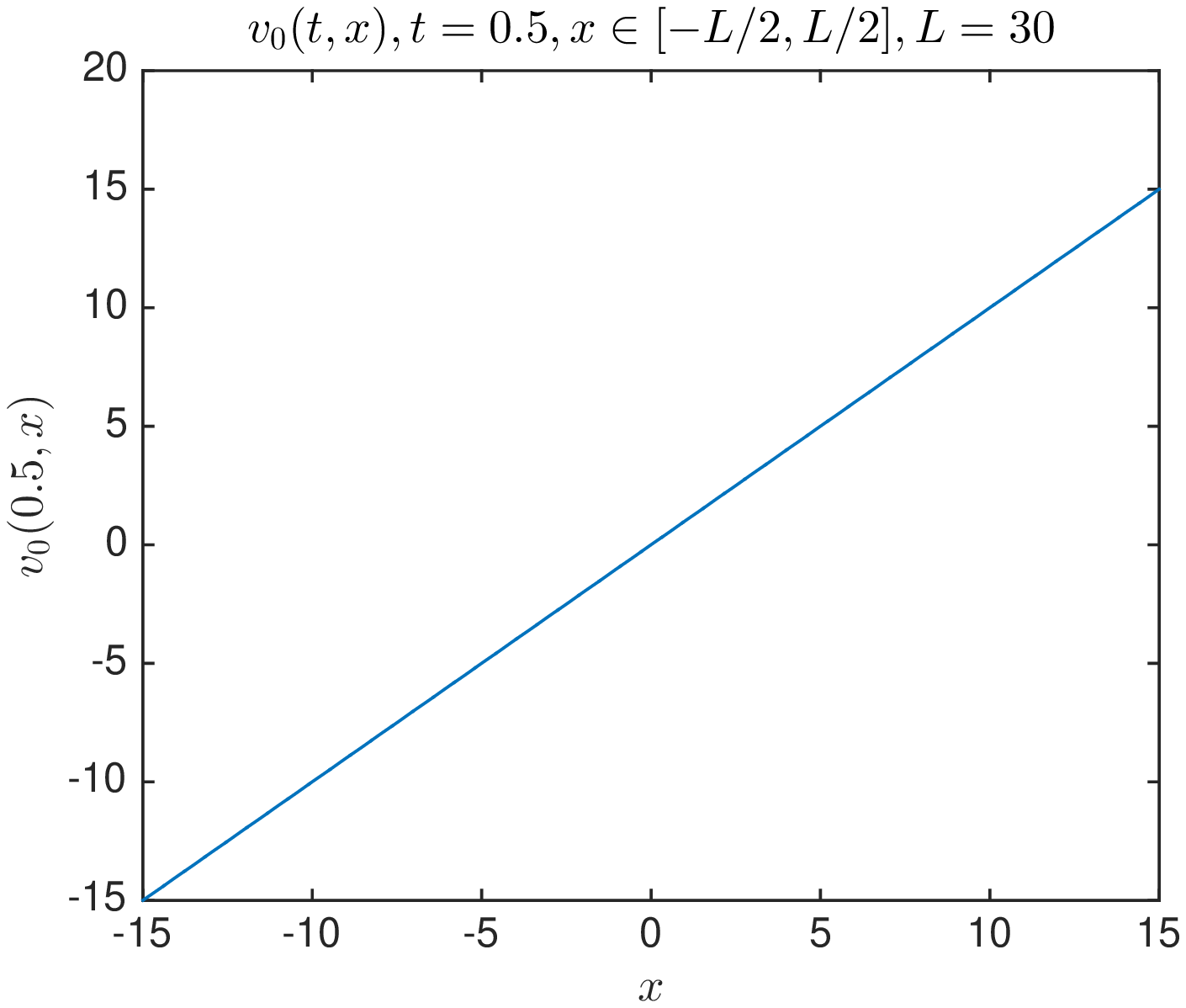}
        \includegraphics[width=0.48\textwidth]{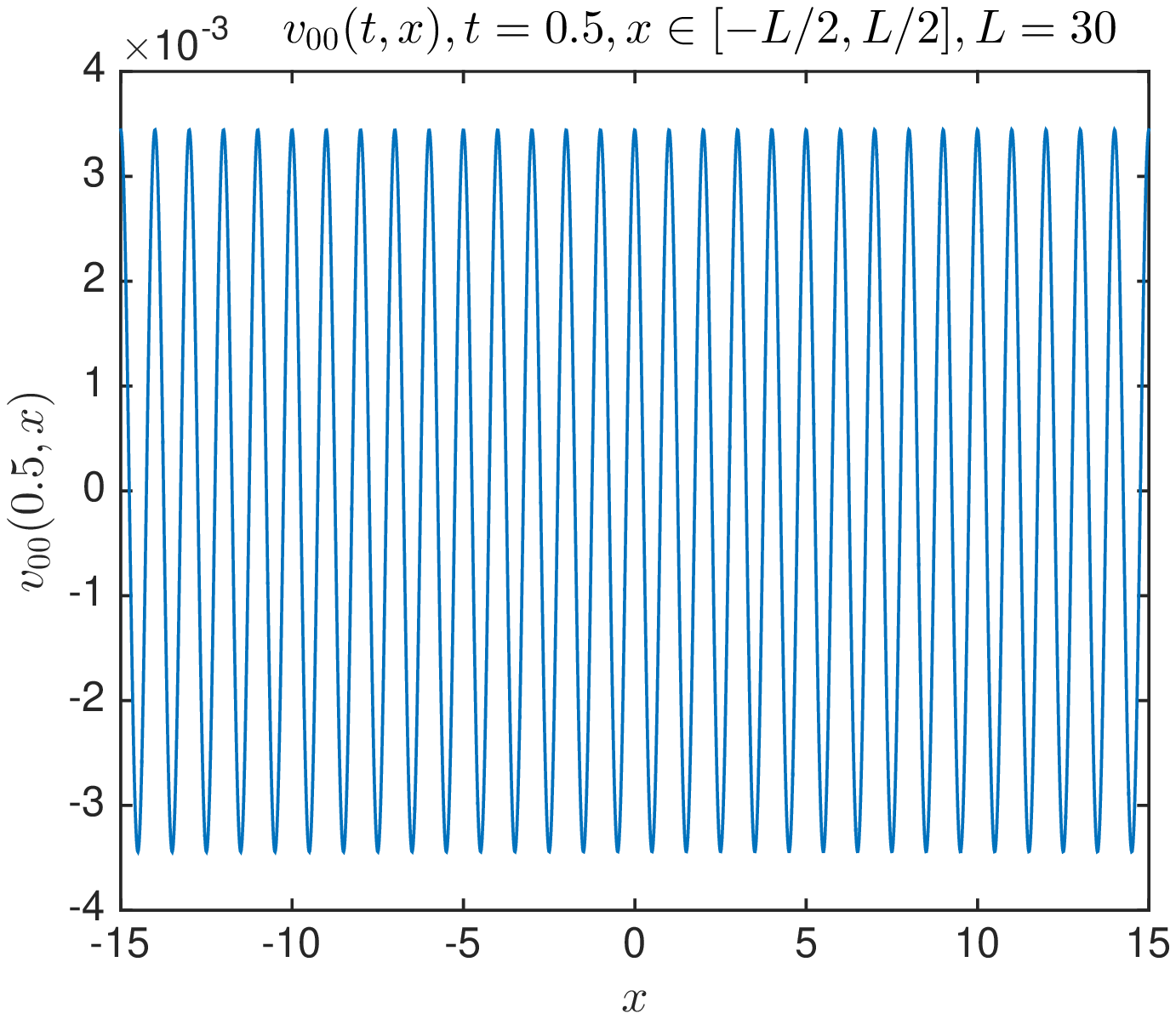}
    \caption{The equation \eqref{Eq_v0} is solved using a one dimensional coefficient $A(x,y) = 1.1 + \dfrac{1}{2}\left( \sin(2 \pi x + 0.1) + \sin(2\pi y + 2) \right)$ and the initial data $v_0(0,x) = x$ over the domain $(t,x) \in (0,1] \times [-L,L]$ for $L = 30$. (left) the term $v_0(t,\cdot)$ is depicted for $t = 0.5$. (right) the term $v_{00}(t,\cdot)$ is computed by $v_{00} = v_0  - x$. }
                \label{Fig_v0v00Terms}
\end{figure}

 \begin{figure}[h] 
    \centering
        \includegraphics[width=0.47\textwidth]{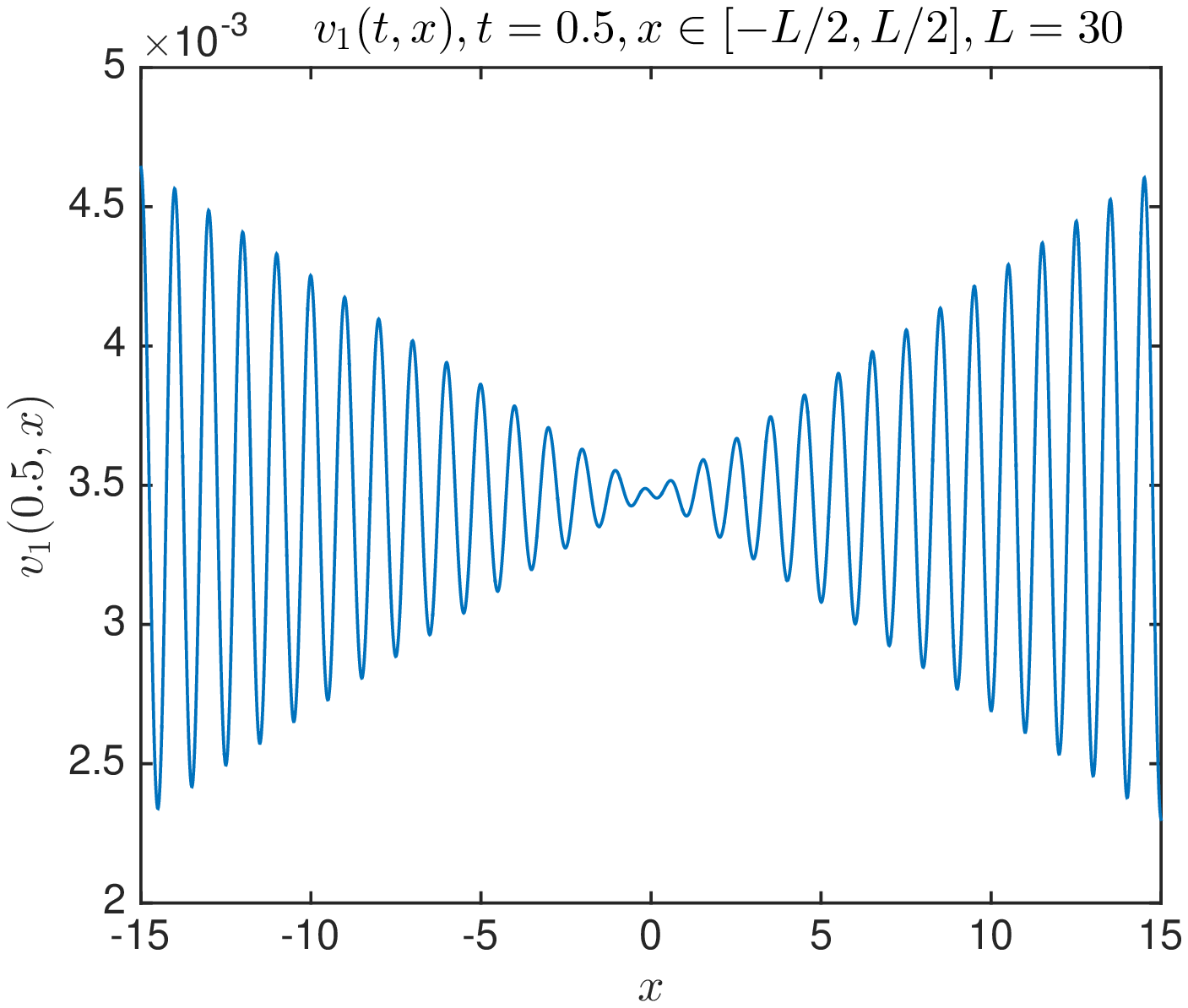}
        \includegraphics[width=0.47\textwidth]{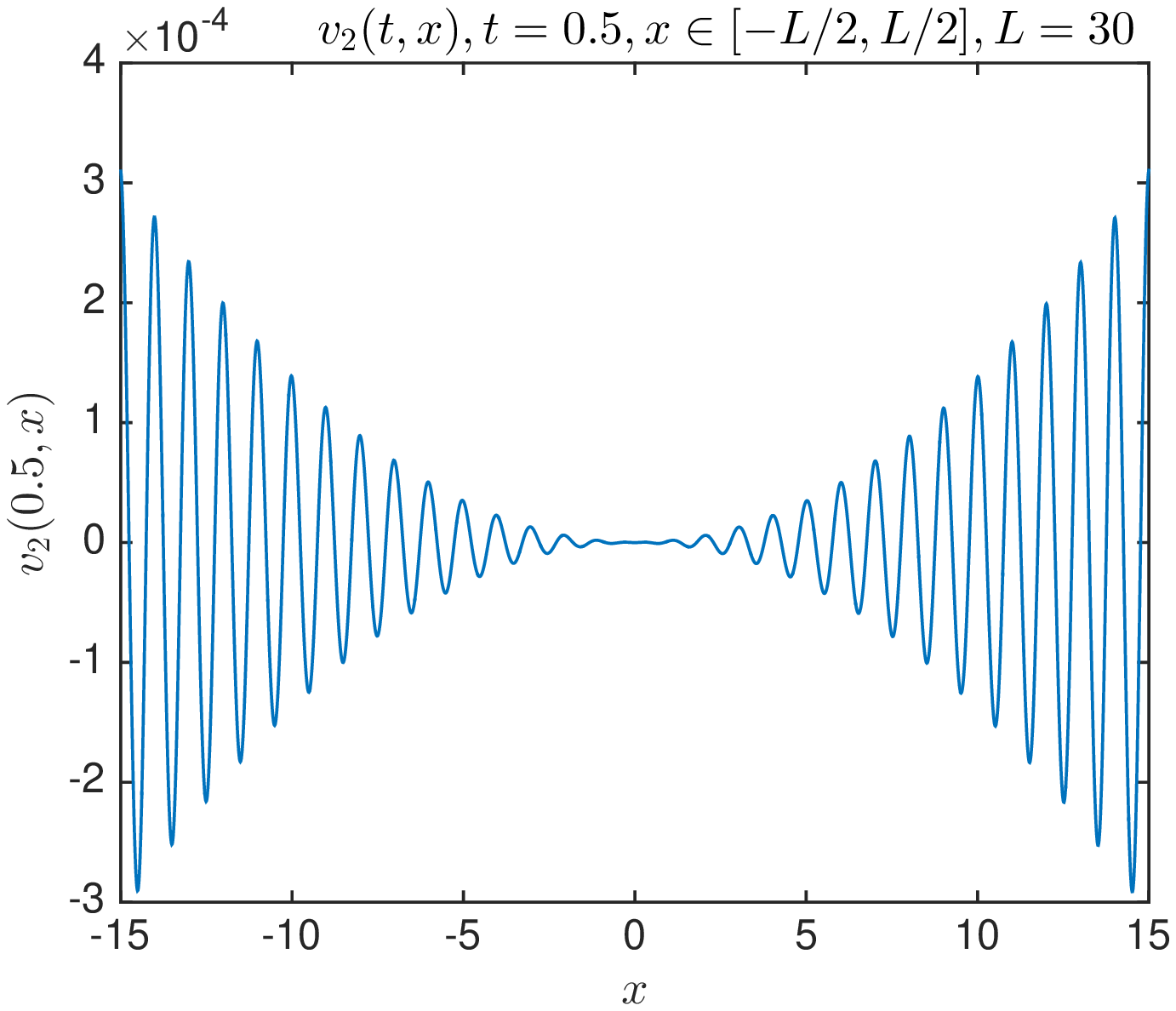}
    \caption{The equations \eqref{Eq_vm} for $m= 1$ and $m=2$  are solved with the same coefficient and domain as in Figure \ref{Fig_v0v00Terms}. (left) this plot shows that $v_1 \in \mathbb{P}_1$ has linear growth in space as the theory of quasi-polynomials predicts. (right) this plot shows that $v_2 \in \mathbb{P}_2$ has a quadratic growth in space.}
                \label{Fig_v1v2Terms}
\end{figure}
\begin{Remark}\label{Rem_Locally_Periodic} Using the notation introduced in the beginning of Section $4$, the derivative $\partial_{x_j} A(0,\by)$ should be understood as $\partial_{x_j} A_{\br_0,\gamma}(\bx,\by) |_{\bx=0}$. 
\end{Remark}
To illustrate the idea, we present also numerical simulations, in Figures \ref{Fig_v0v00Terms} and \ref{Fig_v1v2Terms}, where we solve \eqref{Eq_v0}, and \eqref{Eq_vm} for $m= 1$ and $m=2$. The numerical results are consistent with the fact that $v_0 \in \mathbb{P}_1$, $v_1 \in \mathbb{P}_1$. Moreover, we observe, in Figure \ref{Fig_v1v2Terms}, that $v_2 \in \mathbb{P}_2$ has a quadratic growth in spatial dimensions.
\subsection{Energy estimates}\label{SubSec_EnergyEstimates}
In this section we present energy estimates for elliptic and second order hyperbolic equations. Throughout the section, we will assume $Y$-periodic {\color{red} uniformly elliptic and bounded coefficients} of the form $B = B(\by)$, where $B_{ij} \in C^{\infty}(\overline{Y})$. The first lemma concerns the regularity in high order Sobolev norms of solutions to periodic elliptic problems. In this section, we write
$$
L = \nabla_\by \cdot B \nabla_\by,
$$
and we denote the averages over the unit cube $Y$ by
$$
\overline{f} := \int_{Y} f(\by) \; d\by.
$$
\begin{Lemma}(Lemma $3.1$ in \cite{Arjmand_Runborg_1})\label{Lemma_Elliptic_Regularity} Suppose that $f \in H^{k}(Y)$ is a $Y$-periodic function and that $\overline{f}= 0$. Then, for any positive integer $n$, there exists a unique periodic function $u \in H^{k+2n}(Y)$ satisfying
$$
(-1)^{n} L^{n}[u] = f, \quad \overline{u}=0.
$$ 
In addition, the following stability estimate holds
\begin{align}\label{Eqn_Elliptic_Estimates}
\| u \|_{H^{k+2n}(Y)} \leq C \| f \|_{H^k(Y)},
\end{align}
where $C$ does not depend on $f$ but may depend on $k,Y$ and $B$.
\end{Lemma}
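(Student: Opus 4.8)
The plan is to reduce the statement to the case $n=1$ and then induct on $n$. For the base case I would solve $-L u = f$ in the mean-zero subspace of $H^1_{\mathrm{per}}(Y)$ via the Lax--Milgram theorem applied to the bilinear form $a(u,v) = \int_Y (B\nabla_\by u)\cdot\nabla_\by v\,d\by$. Boundedness of $B$ gives continuity of $a$, and coercivity on mean-zero functions follows by combining the uniform ellipticity $a(u,u)\ge c_1\|\nabla_\by u\|_{L^2(Y)}^2$ with the Poincar\'e--Wirtinger inequality $\|u\|_{L^2(Y)}\le C\|\nabla_\by u\|_{L^2(Y)}$ (valid precisely because $\overline u=0$); the source $f\in H^k(Y)\hookrightarrow L^2(Y)\hookrightarrow (H^1_{\mathrm{per}})^*$ defines a bounded functional. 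This produces a unique mean-zero weak solution with $\|u\|_{H^1(Y)}\le C\|f\|_{L^2(Y)}$. I would note that the compatibility condition $\overline f=0$ is not optional but forced, since $\overline{Lw}=\int_Y\nabla_\by\cdot(B\nabla_\by w)\,d\by=0$ for every periodic $w$, there being no boundary terms on the torus.

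Next I would upgrade the regularity by the standard periodic elliptic bootstrap. Because $B_{ij}\in C^\infty(\overline Y)$, one obtains (for instance by the difference-quotient method, or by comparison with the constant-coefficient Fourier multiplier and commutator estimates) the implication $f\in H^k(Y)\Rightarrow u\in H^{k+2}(Y)$ together with the a priori inequality $\|u\|_{H^{k+2}(Y)}\le C\bigl(\|f\|_{H^k(Y)}+\|u\|_{L^2(Y)}\bigr)$. Absorbing $\|u\|_{L^2(Y)}\le C\|f\|_{L^2(Y)}$ from the base-case energy bound yields the clean estimate $\|u\|_{H^{k+2}(Y)}\le C\|f\|_{H^k(Y)}$. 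On the torus there is no boundary, so interior regularity is already global and this step is uniform in $\by$.

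For general $n$ I would solve sequentially: set $u_0:=f$ and, for $j=1,\dots,n$, let $u_j$ be the unique mean-zero solution of $-Lu_j=u_{j-1}$. Every solve is legitimate because its right-hand side is mean-zero, $\overline{u_0}=\overline f=0$ by hypothesis and $\overline{u_j}=0$ by construction. Then $u:=u_n$ satisfies $(-1)^n L^n u=(-L)^n u_n=u_0=f$, the regularity gains compound to give $u_j\in H^{k+2j}(Y)$ hence $u\in H^{k+2n}(Y)$, and chaining the base-case estimate gives $\|u\|_{H^{k+2n}(Y)}\le C^n\|f\|_{H^k(Y)}$, which is the asserted bound with a constant depending on $n,k,Y,B$ only. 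Uniqueness follows by descent: if $(-L)^n u=0$ with $\overline u=0$, put $v:=(-L)^{n-1}u$; then $-Lv=0$ forces $v$ constant by coercivity, while $\overline v=0$ since $\overline{Lw}=0$ for periodic $w$, so $v=0$, and repeating lowers the power until $-Lu=0$, $\overline u=0$ gives $u=0$.

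The only place where genuine work is required is the single-operator periodic elliptic regularity estimate $\|u\|_{H^{k+2}(Y)}\le C\|f\|_{H^k(Y)}$; the rest is bookkeeping of the mean-zero compatibility at each stage and the induction. I expect this estimate to be the main obstacle, but the smoothness of $B$ is exactly what makes it hold to \emph{all} orders, which is what permits both the power $n$ and the Sobolev index $k$ to be arbitrary.
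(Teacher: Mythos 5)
Your proof is correct. Note that the paper does not prove this lemma itself---it is imported verbatim as Lemma 3.1 of \cite{Arjmand_Runborg_1}---so there is no in-paper argument to compare against; your combination of Lax--Milgram with Poincar\'e--Wirtinger coercivity on the mean-zero periodic subspace, the smooth-coefficient elliptic bootstrap $f \in H^{k}(Y) \Rightarrow u \in H^{k+2}(Y)$, and sequential solves with induction on $n$ (plus uniqueness by descent using $\overline{Lw}=0$) is the standard proof of exactly this statement. The only cosmetic discrepancy is that your chained constant $C^{n}$ depends on $n$, whereas the lemma as stated lists only $k$, $Y$ and $B$; this is harmless since $n$ is a fixed positive integer (bounded by $q/2-1$ in all of the paper's applications), but it is worth recording the $n$-dependence explicitly.
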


Now we present a lemma which gives energy estimates for the periodic wave equation in high order Sobolev norms. 
\begin{Lemma}\label{Lemma_Energy_Estimate}
Let $f \in C^{\infty}([0,T]\times Y)$, $f(t,\cdot)$ be $Y$-periodic, and $\overline{f(t,\cdot)} = 0$. Moreover, let $g,h \in C^{\infty}(Y)$ be $Y$-periodic functions with $\bar{g} = \bar{h} = 0$. Then there is a unique solution $u \in C^{\infty}([0,T]\times Y)$, with $\overline{u(t,\cdot)}= 0$, solving
\begin{align*}
\partial_{tt} u  = L[u]  + f(t,\by), \\
u(0,\by) = g, \quad \partial_t u(0,\by) = h.
\end{align*}
Moreover, there exists a constant $C$ independent of $t$ such that for any $n\geq 0$
\begin{equation}\label{Eqn_HighOrderEstimate_Wave}
\| u(t,\cdot) \|_{H^{2n + 1}(Y)} \leq C E^{1/2}_{L^n[u]}(0) + C \begin{cases} \int_{0}^{t} \| f(z,\cdot) \|_{H^{2n}(Y)} dz, & f \text{ is time dependent} \\
\| f \|_{H^{2n}(Y)}, & f \text{ is time independent},
\end{cases}
\end{equation}
where the energy is defined as
$$
E_u(t):= \dfrac{1}{2} \int_{Y} |\partial_t u(t,\by)|^2 + {\color{red} B} \nabla u \cdot \nabla u(t,\by) \; d\by. 
$$
\end{Lemma}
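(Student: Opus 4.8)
The plan is to prove the three assertions—existence, uniqueness with zero mean, and the high-order estimate—by reducing everything to the basic $L^2$ energy identity and then upgrading it via the elliptic regularity of Lemma \ref{Lemma_Elliptic_Regularity}. Existence and uniqueness of a smooth periodic solution follow from standard theory (for instance a spectral/Galerkin construction on the eigenfunctions of $-L$ over $Y$, with smoothness recovered by elliptic bootstrapping from the smoothness of $B,f,g,h$), so I would only sketch this. The zero-mean property then propagates in time: integrating the equation over $Y$ and using periodicity gives $\overline{L[u(t,\cdot)]}=0$ and $\overline{f(t,\cdot)}=0$, hence $\partial_{tt}\,\overline{u(t,\cdot)}=0$; since $\overline{g}=\overline{h}=0$ this forces $\overline{u(t,\cdot)}=0$ for all $t$, which is exactly what is needed to invoke Lemma \ref{Lemma_Elliptic_Regularity} later.

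The engine of the estimate is the classical energy identity. For any solution $w$ of $\partial_{tt}w=L[w]+F$ with $w(t,\cdot)$ periodic, multiplying by $\partial_t w$ and integrating by parts over $Y$ (no boundary terms, by periodicity) gives $\frac{d}{dt}E_{w}(t)=\int_Y F\,\partial_t w\,d\by$, whence $\frac{d}{dt}E_{w}^{1/2}\leq C\|F(t,\cdot)\|_{L^2(Y)}$ and therefore $E_{w}(t)^{1/2}\leq E_{w}(0)^{1/2}+C\int_0^t\|F(z,\cdot)\|_{L^2(Y)}\,dz$. The key observation is that, because $B=B(\by)$ is time-independent, $L$ commutes with $\partial_t$, so $w:=L^n[u]$ solves the same equation with forcing $L^n[f]$ and data $L^n[g],L^n[h]$. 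Applying the identity to this $w$ and using $\|L^n[f]\|_{L^2(Y)}\leq C\|f\|_{H^{2n}(Y)}$ yields a bound on $E_{L^n[u]}(t)^{1/2}$ in terms of $E_{L^n[u]}(0)^{1/2}$ and the time integral of $\|f\|_{H^{2n}(Y)}$.

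It then remains to convert $E_{L^n[u]}(t)^{1/2}$ into the $H^{2n+1}(Y)$ norm of $u$. Since $\overline{u(t,\cdot)}=0$ and $\overline{L^n[u(t,\cdot)]}=0$, I would regard $u(t,\cdot)$ as the solution of $(-1)^nL^n[u]=(-1)^nL^n[u]$ and apply Lemma \ref{Lemma_Elliptic_Regularity} with $k=1$ to obtain $\|u(t,\cdot)\|_{H^{2n+1}(Y)}\leq C\|L^n[u](t,\cdot)\|_{H^1(Y)}$; the Poincar\'e inequality (zero mean) together with the lower ellipticity bound on $B$ then gives $\|L^n[u](t,\cdot)\|_{H^1(Y)}\leq C\|\nabla L^n[u](t,\cdot)\|_{L^2(Y)}\leq C\,E_{L^n[u]}(t)^{1/2}$. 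Chaining these inequalities with the energy bound of the previous paragraph proves the time-dependent case of \eqref{Eqn_HighOrderEstimate_Wave}.

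The main obstacle is the time-independent case, where the naive bound $C\int_0^t\|f\|_{H^{2n}(Y)}\,dz=Ct\,\|f\|_{H^{2n}(Y)}$ grows in $t$, whereas the claimed bound is $t$-independent. Here I would introduce the steady state $u_\infty$ solving $L[u_\infty]=-f$ with $\overline{u_\infty}=0$, which exists and satisfies $\|u_\infty\|_{H^{2n+1}(Y)}\leq C\|f\|_{H^{2n}(Y)}$ by Lemma \ref{Lemma_Elliptic_Regularity}, and set $v:=u-u_\infty$, which solves the homogeneous equation $\partial_{tt}v=L[v]$. For the homogeneous equation the energy is exactly conserved, $E_{L^n[v]}(t)=E_{L^n[v]}(0)$, so the conversion of the third paragraph gives $\|v(t,\cdot)\|_{H^{2n+1}(Y)}\leq C\,E_{L^n[v]}(0)^{1/2}$ uniformly in $t$. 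Bounding $E_{L^n[v]}(0)^{1/2}\leq C\,E_{L^n[u]}(0)^{1/2}+C\|\nabla L^n[u_\infty]\|_{L^2(Y)}$ (from $v(0)=g-u_\infty$, $\partial_t v(0)=h$ and the ellipticity bounds) and using $\|\nabla L^n[u_\infty]\|_{L^2(Y)}\leq C\|u_\infty\|_{H^{2n+1}(Y)}\leq C\|f\|_{H^{2n}(Y)}$, together with $\|u(t,\cdot)\|_{H^{2n+1}(Y)}\leq \|v(t,\cdot)\|_{H^{2n+1}(Y)}+\|u_\infty\|_{H^{2n+1}(Y)}$, yields the stated $t$-uniform estimate. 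I expect the only genuinely delicate points to be verifying that every mean-zero condition is preserved under $L^n$ (so that Lemma \ref{Lemma_Elliptic_Regularity} applies at each stage) and keeping the constants independent of $t$ and $n$ as claimed.
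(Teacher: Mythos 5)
Your proposal is correct and follows essentially the same route as the paper: set $w := L^n[u]$ (valid because $B$ is time-independent, so $L^n$ commutes with the equation), apply the basic $n=0$ energy estimate to $w$ with forcing $L^n[f]$ and data $L^n[g], L^n[h]$, bound $\| L^n[f] \|_{L^2(Y)} \leq C \| f \|_{H^{2n}(Y)}$, and convert $\| w \|_{H^1(Y)}$ back into $\| u \|_{H^{2n+1}(Y)}$ via Lemma \ref{Lemma_Elliptic_Regularity} with $k=1$. The one point where you go beyond the paper is that the paper dismisses the $n=0$ case as ``classical,'' whereas you actually prove its only delicate half --- the $t$-uniform bound for time-independent $f$ --- by the steady-state subtraction $v = u - u_\infty$ with $L[u_\infty] = -f$ and exact energy conservation for the homogeneous equation, which is precisely the argument the paper leaves implicit.
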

\begin{proof}The result is classical for $n=0$. To get the estimate for $n>0$, we consider $w := L^n[u]$. Then by elliptic regularity, Lemma \ref{Lemma_Elliptic_Regularity}, we have 
\begin{equation} \label{Eqn_u_w_Estimate}
\| u \|_{H^{k + 2n}(Y)} \leq  \| w \|_{H^{k}(Y)}. 
\end{equation}
Next we apply the operator $L^{n}$ to the main equation and obtain
\begin{align*}
\partial_{tt} w(t,y)  &= L[w]  + L^{n}[f], \\
w(0,y) = L^n[g]&, \quad \partial_t w(0,\by) = L^n[h].
\end{align*}
Then we have
\begin{align*}
\| w(t,\cdot) \|_{H^{1}(Y)} &\leq C E^{1/2}_{w}(0) + C \begin{cases} \int_{0}^{t} \| L^n[f](z,\cdot) \|_{L^{2}(Y)} dz, & f \text{ is time dependent} \\
\| L^n[f] \|_{L^{2}(Y)}, & f \text{ is time independent},
\end{cases}
\end{align*}
The final result follows by observing that $\| L^n[f] \|_{L^2(Y)} \leq C \| f \|_{H^{2n}(Y)}$, and using the estimate \eqref{Eqn_u_w_Estimate} with $k=1$.
\end{proof}
\subsection{Time averages} \label{SubSec_TimeAverages}
The upscaling step in HMM includes averaging of the microscopic flux over micro boxes in time and space, see \eqref{Analysis_HMM_Flux}. Since 
$$
\left( K_{\tau} \ast \nabla u^{\e,\eta} \right)(t,\bx) = \nabla \left( K_{\tau} \ast u^{\e,\eta} \right)(t,\bx),
$$
and since $A$ is time independent we can write the HMM flux in \eqref{Analysis_HMM_Flux} as 
$$
\bF(\br_0) = \left( K_{\eta}\ast A^{\e} \nabla \left( K_{\tau} \ast u^{\e,\eta} \right) \right)(0,0).
$$
The idea is now to write down equations for the time average $\left( K_{\tau}\ast u^{\e}(\cdot,\bx)\right)(0)$. To do this we start by presenting some intermediate results. First we use a theorem from \cite{Arjmand_Runborg_1} which is used to derive equations for the local time averages of solutions to periodic wave equations. For the next theorem, we will assume coefficient functions of the form $B = B(\bx,\by)$, where $B_{ij} \in C^{\infty}(\overline{\Omega} \times \overline{Y})$, and $B(\bx,\cdot)$ is $Y$-periodic. We write also $L:= \nabla_\by \cdot B \nabla_\by$.
\begin{Theorem} (Theorem $4.1$ in \cite{Arjmand_Runborg_1}) \label{Thm_Time_Averaging_Wave_Equation}Suppose $\alpha = \frac{\varepsilon}{\tau}$ where $0< \varepsilon \leq \tau$. Let $f \in C^{\infty}([0,\alpha^{-1}],Y)$ be a $Y$-periodic function with $\overline{f(t,\cdot)} = 0$, and $K \in \mathbb{K}^{p,q}$ with an even $q$. Furthermore, assume that $w(t,\by;\bx)$ is the solution of the periodic wave equation parametrized by $\bx$, i.e., $B = B(\bx,\by)$,
\begin{equation}\label{Main_PDE_Eqn}
\begin{array}{ll}
\partial_{tt} w(t,\by;\bx) =  L[w]  + f(t,\by;\bx), \\
w(0,\by;\bx) = \partial_t w(0,\by;\bx)  = 0. 
\end{array}
\end{equation}
Then the local time average $ d^{\{2k \}}(\bx,\by):= K_{\tau} \ast \partial_t^{2k} w(\cdot/\varepsilon,\by;\bx)(0)$  satisfies 
\begin{equation*}
L[ d^{\{ 2k\}}] = -\sum_{\ell=k}^{q/2 - 1} L^{-\ell + k} K_{\tau}  \ast \partial_t^{2\ell} f(\cdot/\varepsilon,\by;\bx)(0)  + \alpha^q R_k(\bx,\by), \quad  k=0,1,\cdots,q/2 -1,
\end{equation*}
where $R_k(\bx,\by)$ is  $Y$-periodic with zero average ($\overline{R_k(\bx,\cdot)} = 0$), and  
\begin{align}\label{Eqn_R_Estimates}
\| R_k(\bx,\cdot) \|_{H^1(Y)} &\leq C \max_{|t| \leq 1}  \| w(t/\alpha, \cdot;\bx) \|_{L^2(Y)},
\end{align}
where $C$ does not depend on $\alpha,\e,\eta$ but may depend on $Y,K,p$ or $q$ .
\end{Theorem}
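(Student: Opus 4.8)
The plan is to read off an effective elliptic equation for each time average $d^{\{2k\}}$ directly from the wave equation \eqref{Main_PDE_Eqn}, and then to isolate the error as a high power of $\alpha$. First I would differentiate \eqref{Main_PDE_Eqn} in time: since $\partial_{tt}w = L[w]+f$ and $L$ acts only in $\by$, commuting $\partial_t^{2k}$ through the equation gives $\partial_t^{2k+2}w = L[\partial_t^{2k}w] + \partial_t^{2k}f$. Applying the time-averaging map $K_\tau \ast (\cdot)(\cdot/\e)|_{0}$, which commutes with $L$, produces the recursion $d^{\{2k+2\}} = L[d^{\{2k\}}] + F^{\{2k\}}$, where $F^{\{2k\}}(\bx,\by):=K_\tau\ast \partial_t^{2k}f(\cdot/\e,\by;\bx)(0)$. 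Integrating \eqref{Main_PDE_Eqn} over $Y$ and using that $L$ is in divergence form, together with $\overline{f(t,\cdot)}=0$ and the zero initial data, shows $\overline{w(t,\cdot)}=0$; hence $\overline{d^{\{2k\}}}=\overline{F^{\{2k\}}}=0$, which legitimizes the use of $L^{-1}$ through the elliptic regularity Lemma \ref{Lemma_Elliptic_Regularity}. Existence, uniqueness and smoothness of $w$ (and therefore of all $d^{\{2k\}}$) I would take from Lemma \ref{Lemma_Energy_Estimate}.

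Second, I would telescope the recursion. Writing it as $L[d^{\{2k\}}] = d^{\{2k+2\}} - F^{\{2k\}}$ and repeatedly substituting $d^{\{2k+2\}} = L^{-1}\bigl(d^{\{2k+4\}} - F^{\{2k+2\}}\bigr)$, after $j=q/2-1-k$ steps one arrives at
\[
L[d^{\{2k\}}] \;=\; -\sum_{\ell=k}^{q/2-1} L^{-(\ell-k)} F^{\{2\ell\}} \;+\; L^{-(q/2-1-k)} d^{\{q\}} .
\]
This already has the form claimed in the theorem, with $\alpha^q R_k := L^{-(q/2-1-k)} d^{\{q\}}$; the remaining work is to show that the tail $d^{\{q\}}$ carries the factor $\alpha^q$ and to bound $R_k$ in $H^1$.

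Third, to extract $\alpha^q$ I would rescale time in the tail. With $K_\tau(s)=\tfrac1\tau K(s/\tau)$, the substitution $s=\tau\sigma$ and $\tau/\e=1/\alpha$ give $d^{\{q\}}(\bx,\by)=\int K(\sigma)\,(\partial_t^{q}w)(\sigma/\alpha,\by;\bx)\,d\sigma$. Since $(\partial_t^{q}w)(\sigma/\alpha)=\alpha^{q}\partial_\sigma^{q}\bigl[w(\sigma/\alpha)\bigr]$, this equals $\alpha^{q}\int K(\sigma)\,\partial_\sigma^{q}\bigl[w(\sigma/\alpha,\by)\bigr]\,d\sigma$. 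Because $K\in\mathbb{K}^{p,q}$ is $C^{q}$ and compactly supported in $[-1,1]$, all its derivatives up to order $q$ vanish at $\pm1$, so integrating by parts $q$ times leaves no boundary terms and (with $q$ even) yields $d^{\{q\}}(\bx,\by)=\alpha^{q}\int_{-1}^{1}K^{(q)}(\sigma)\,w(\sigma/\alpha,\by;\bx)\,d\sigma$. This identifies $R_k=L^{-(q/2-1-k)}\int_{-1}^{1}K^{(q)}(\sigma)\,w(\sigma/\alpha,\cdot;\bx)\,d\sigma$, whose zero average is inherited from $w$. The desired estimate then follows by Minkowski's inequality and the smoothing estimate \eqref{Eqn_Elliptic_Estimates}, which trades each inverse power of $L$ for two spatial derivatives, giving $\|R_k\|_{H^1(Y)} \le C\int_{-1}^{1}|K^{(q)}(\sigma)|\,\|w(\sigma/\alpha,\cdot)\|_{L^2(Y)}\,d\sigma \le C\max_{|t|\le1}\|w(t/\alpha,\cdot)\|_{L^2(Y)}$, with $C$ depending only on $K,q,Y$ and the coefficient.

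I expect the main obstacle to be the interplay between the telescoping bookkeeping and the regularity loss: every inverse $L^{-1}$ must be justified by the zero-average property, and the final $H^1$ bound on $R_k$ rests entirely on the smoothing of Lemma \ref{Lemma_Elliptic_Regularity}. The delicate endpoint is $k=q/2-1$, where no inverse power of $L$ survives; there the $H^1$ control of the tail must instead be drawn from the $\sigma$-uniform smoothness and energy bounds for $w$ furnished by Lemma \ref{Lemma_Energy_Estimate}. The precise role of the class $\mathbb{K}^{p,q}$ is to make the $q$-fold integration by parts produce the gain $\alpha^{q}$ with no boundary contribution, which is the sole source of the stated rate.
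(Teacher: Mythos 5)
First, a caveat: the paper does not prove this theorem itself --- it is imported verbatim from \cite{Arjmand_Runborg_1} (Theorem 4.1 there) --- so your attempt can only be measured against the statement, not against an in-paper proof. Much of your skeleton is sound: differentiating the equation in time and averaging gives the recursion $d^{\{2k+2\}} = L[d^{\{2k\}}] + F^{\{2k\}}$; the zero-average facts you derive do legitimize the inverses via Lemma \ref{Lemma_Elliptic_Regularity}; the telescoping yields exactly the stated sum with remainder $\alpha^q R_k = L^{-(q/2-1-k)}d^{\{q\}}$; and the $q$-fold integration by parts (valid because $K\in C^q$ supported in $[-1,1]$ forces $K^{(j)}(\pm 1)=0$ for $j\le q$) correctly extracts the factor $\alpha^q$, giving $R_k = L^{-(q/2-1-k)}G$ with $G=\int_{-1}^{1}K^{(q)}(\sigma)\,w(\sigma/\alpha,\cdot)\,d\sigma$. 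For $k\le q/2-2$ your estimate is complete: Minkowski gives $\|G\|_{L^2(Y)}\le C\max_{|t|\le 1}\|w(t/\alpha,\cdot)\|_{L^2(Y)}$, and Lemma \ref{Lemma_Elliptic_Regularity} with $n=q/2-1-k\ge 1$ upgrades this to $H^{2n}\subset H^1$.

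The genuine gap is the endpoint $k=q/2-1$, and your proposed repair fails. There $R_{q/2-1}=G$ itself, with no inverse power of $L$ left to smooth, and Minkowski only gives $\|G\|_{L^2(Y)}\le C\max\|w\|_{L^2(Y)}$ or $\|G\|_{H^1(Y)}\le C\max\|w\|_{H^1(Y)}$ --- neither is the asserted bound \eqref{Eqn_R_Estimates}, which controls an $H^1$ norm by the $L^2$ norm of $w$. Your fallback, Lemma \ref{Lemma_Energy_Estimate}, cannot close this: that lemma bounds Sobolev norms of $w$ by norms of the \emph{forcing} $f$, so invoking it replaces $\max_{|t|\le1}\|w(t/\alpha,\cdot)\|_{L^2(Y)}$ by a data-dependent quantity, i.e., it proves a statement of a different form, while the constant in \eqref{Eqn_R_Estimates} must depend only on $Y,K,p,q$. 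The missing mechanism is betrayed by what your proof never uses: the hypotheses $K^{(q+1)}\in BV$ and the moment count $p$ of the class $\mathbb{K}^{p,q}$. The gain of one spatial derivative at the endpoint comes from the oscillatory structure of the wave equation combined with exactly this extra kernel regularity: for instance, applying the equation once more and integrating by parts twice in the Stieltjes sense gives $L[G]=\alpha^{2}\int w(\sigma/\alpha,\cdot)\,dK^{(q+1)}(\sigma)-\int K^{(q)}(\sigma)f(\sigma/\alpha,\cdot)\,d\sigma$, or, in an eigenfunction expansion of $L$, one exploits that $\int K(\sigma)e^{i\omega\sigma}\,d\sigma$ decays like $|\omega|^{-(q+2)}$ for $K\in\mathbb{K}^{p,q}$, so that time oscillation at frequency $\sqrt{\lambda_j}/\alpha$ is traded for decay in $\lambda_j$. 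Converting this into a bound by $\max\|w\|_{L^2}$ alone (eliminating the forcing term, and handling the interchange of $\max_t$ with the sum over modes) is precisely the nontrivial content of the proof in \cite{Arjmand_Runborg_1}, and it is absent from your argument. Note the endpoint is not a corner case: it is the \emph{only} case when $q=2$, and the paper's application (the bounds on $Z_{00}$, $Z_{11j}$, $Z_{10}$ in Theorem \ref{Thm_TimeAverages_d}) relies on it.
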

\begin{Remark} Our main aim in this section is to apply Theorem \ref{Thm_Time_Averaging_Wave_Equation} to equations \eqref{Eq_v00}, and \eqref{eqn2_Lemma_v1} in order to be able to find equations for the local time averages. However, the theorem assumes periodic wave equations with forcing terms that have zero average over the unit cube $Y$. Clearly, the equations for $v_{00}$ and $\{ v_{11j} \}_{j=1}^{d}$ satisfy this condition. On the other hand, the equation for $v_{10}$ needs special treatment since its right hand side does not have zero average. To handle this problem we introduce $g(t):= \overline{v_{10}(t,\cdot)}$, and split $v_{10}(t,\by)$ into two parts as follows.
$$
v_{10}(t,\by) = \tilde{v}_{10}(t,\by)  + \overline{v_{10}(t,\cdot)} = \tilde{v}_{10}(t,\by)  + g(t).
$$
Then $\tilde{v}_{10}$ satisfies a wave equation with zero average forcing,
\begin{equation}\label{Eqn_vtilde10}
\partial_{tt} \tilde{v}_{10}(t,\by) = L[\tilde{v}_{10}] +  \tilde{M}[v_{11}] + \tilde{f}_{10}(t,\by),
\end{equation}
where 
$$
\tilde{M}[v_{11}] := M[v_{11}] - \overline{M[v_{11}]}, \quad \tilde{f}_{10}(t,\by) = f_{10}(t,\by)  - \overline{f_{10}(t,\cdot)}.
$$  
and $M[v_{11}]$ is given as in \eqref{eqn3_Lemma_v1}. Moreover, the function $g(t)$ solves the second order ODE:
\begin{align} \label{Eqn_g_Equation}
g^{\prime\prime}(t) &= \overline{M[v_{11}]} + \overline{f_{10}(t,\cdot)} \nonumber \\
g(0) &= g^{\prime}(0) =0.
\end{align}

\end{Remark}
We introduce the local time averages $d_{00}$, $\{d_{11j}\}_{j=1}^{d}$, and $d_{10}$ 
\begin{equation} \label{Eqn_LocalTimeAverages}
\begin{array}{cc}
d_{00}(\by) :=  K_{\tau} \ast v_{00}(\cdot/\e,\by)(0),  \quad d_{11j}(\by) :=  K_{\tau} \ast v_{11j}(\cdot/\e,\by)(0), \\  d_{10}(\by) :=  K_{\tau} \ast \tilde{v}_{10}(\cdot/\e,\by)(0).
\end{array}
\end{equation}
In the following theorem we use Theorem \ref{Thm_Time_Averaging_Wave_Equation} to derive equations for these local averages. 
\begin{Theorem}\label{Thm_TimeAverages_d} Suppose $v_{00}$ and $v_{11j}$ solve \eqref{Eq_v00}, and \eqref{eqn2_Lemma_v1} respectively. Moreover, suppose $\tilde{v}_{10}$ solves \eqref{Eqn_vtilde10} and let $K \in \mathbb{K}^{p,q}$, $\alpha = \frac{\e}{\eta}$ with $0< \e \leq \eta$ and $\tau = \eta$. Then the local time averages defined in \eqref{Eqn_LocalTimeAverages} satisfy
\begin{align} \label{Eqn_TimeAverages_d}
L[d_{00}] &= -\nabla_\by \cdot A \bs  + \alpha^{q} Z_{00}(\by) \nonumber \\
L[d_{11j}] &= -\nabla_\by \cdot \partial_{x_j} A \bs - \nabla_\by \cdot \left( \left( \partial_{x_j} A \right) \nabla_\by d_{00}\right) + \alpha^{q} Z_{11j}(\by) ,  \nonumber \\
L[d_{10}] &= -\sum_{j=1}^{d} \nabla_\by \cdot \left( A e_{j} d_{11j} \right) - \sum_{j=1}^{d} e_{j}^{T} A \nabla_\by d_{11j} - \nabla_{\bx} \cdot A \bs  - \left( \nabla_{\bx} \cdot A \right) \cdot \nabla_{\by} d_{00} \nonumber \\ \hspace{1cm} &+ \tilde{K}  + \alpha^{q} Z_{10}(\by), 
\end{align}
where $\tilde{K}$ is a constant such that the right hand side has zero average, $Z_{00},Z_{11j},$ and $Z_{10}$ are $Y$-periodic functions with zero average. Moreover, 
\begin{equation}\label{Eqn_Z_Estimates}
\| Z \|_{H^1(Y)} \leq C_1 |\bs |_{\infty} \begin{cases} 1, &  Z = Z_{00} \\
\alpha^{-1}, & Z = Z_{11j} \\
\alpha^{-2}, & Z = Z_{10}. 
\end{cases}
\end{equation}
where $C_1$ is a constant independent of $\varepsilon$, $\eta$, $\alpha$.
\end{Theorem}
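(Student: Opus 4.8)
The plan is to apply Theorem~\ref{Thm_Time_Averaging_Wave_Equation} with $k=0$ and $B=A(0,\by)$ separately to each of the three periodic wave equations \eqref{Eq_v00}, \eqref{eqn2_Lemma_v1}, and \eqref{Eqn_vtilde10}, to read off the $\ell=0$ terms of the resulting sums as the stated right-hand sides of \eqref{Eqn_TimeAverages_d}, and to absorb every remaining term into the remainders $Z_{00},Z_{11j},Z_{10}$. For $d_{00}$ this is essentially immediate: the forcing $\nabla_\by\cdot(A(0,\by)\bs)$ in \eqref{Eq_v00} is time-independent and has zero average, so all the $\partial_t^{2\ell}$ contributions with $\ell\ge1$ vanish, and since $K_\tau$ has unit mass the single surviving $\ell=0$ term reproduces the forcing, giving the first line of \eqref{Eqn_TimeAverages_d} with $Z_{00}=R_0$.

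For $d_{11j}$ I would split the forcing $f_{11k}$ from \eqref{eqn3_Lemma_v1} into its time-independent piece $\nabla_\by\cdot(\partial_{x_j}A\,\bs)$ and its time-dependent piece $\nabla_\by\cdot((\partial_{x_j}A)\nabla_\by v_{00})$. The $\ell=0$ contributions then reproduce the stated right-hand side, the time-dependent piece yielding $\nabla_\by\cdot((\partial_{x_j}A)\nabla_\by d_{00})$ because the time convolution commutes with $\nabla_\by$. The $\ell\ge1$ corrections involve the higher time averages $d_{00}^{\{2\ell\}}:=K_\tau\ast\partial_t^{2\ell}v_{00}(\cdot/\e,\by)(0)$; applying Theorem~\ref{Thm_Time_Averaging_Wave_Equation} with $k\ge1$ to the time-independently forced equation \eqref{Eq_v00} gives $L[d_{00}^{\{2\ell\}}]=\alpha^{q}R_\ell^{(00)}$, and then Lemma~\ref{Lemma_Elliptic_Regularity} shows $d_{00}^{\{2\ell\}}=O(\alpha^{q})$, so these corrections fold into $\alpha^{q}Z_{11j}$. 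The same procedure applied to \eqref{Eqn_vtilde10} produces the third line of \eqref{Eqn_TimeAverages_d}, the constant $\tilde K$ arising precisely from the mean-value splitting $v_{10}=\tilde v_{10}+g(t)$ that makes the forcing zero-average; here the $\ell\ge1$ corrections involve $d_{00}^{\{2\ell\}}$ and $d_{11j}^{\{2\ell\}}$ and are controlled recursively by the same elliptic-regularity argument.

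It then remains to prove \eqref{Eqn_Z_Estimates}, and this is where the growth-in-time of the expansion terms enters. By \eqref{Eqn_R_Estimates} each remainder satisfies $\|R_0\|_{H^1(Y)}\le C\max_{|t|\le1}\|w(t/\alpha,\cdot)\|_{L^2(Y)}$, so I need time-growth bounds for $v_{00},v_{11j},\tilde v_{10}$ obtained from Lemma~\ref{Lemma_Energy_Estimate}. Since the forcing of \eqref{Eq_v00} is time-independent---a zero-frequency source, which cannot resonate with the strictly positive natural frequencies of $L$ on the zero-average subspace---the term $v_{00}$ stays bounded, $\|v_{00}(t,\cdot)\|_{L^2(Y)}\le C|\bs|_\infty$. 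The forcing of the $v_{11j}$ equation contains $v_{00}$ and therefore oscillates at exactly those natural frequencies, producing resonant linear growth $\|v_{11j}(t,\cdot)\|_{L^2(Y)}\le C|\bs|_\infty(1+t)$; and the forcing of \eqref{Eqn_vtilde10} contains $v_{11}$, which grows linearly, producing quadratic growth $\|\tilde v_{10}(t,\cdot)\|_{L^2(Y)}\le C|\bs|_\infty(1+t^2)$. Evaluating at $t=1/\alpha$ gives the orders $O(1)$, $O(\alpha^{-1})$, and $O(\alpha^{-2})$, which are exactly the bounds claimed in \eqref{Eqn_Z_Estimates}; one checks along the way that the subdominant $\ell\ge1$ corrections never exceed these orders.

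The hard part will be the last step: rigorously establishing the hierarchical growth rates $O(1)$, $O(t)$, $O(t^2)$ and showing that the recursion for the higher time averages $d^{\{2\ell\}}$ closes so that the $\ell\ge1$ corrections remain subdominant. The resonance mechanism has to be made quantitative through the energy estimate of Lemma~\ref{Lemma_Energy_Estimate}, carefully tracking how the polynomial-in-$t$ growth is inherited from each level of the hierarchy ($v_{00}\to v_{11j}\to\tilde v_{10}$) to the next, and verifying that the mean-value splitting of $v_{10}$ does not introduce additional uncontrolled growth through the ODE \eqref{Eqn_g_Equation}.
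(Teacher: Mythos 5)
Your proposal is correct and follows essentially the same two-step argument as the paper's proof: apply Theorem~\ref{Thm_Time_Averaging_Wave_Equation} to the three wave equations, keep the $\ell=0$ terms as the stated right-hand sides, absorb the $\ell\geq 1$ corrections into the remainders via the elliptic-regularity bounds $d_{00}^{\{2\ell\}},d_{11j}^{\{2\ell\}}=O(\alpha^{q})$ from Lemma~\ref{Lemma_Elliptic_Regularity}, and obtain \eqref{Eqn_Z_Estimates} from the hierarchical time-growth bounds $O(1)$, $O(t)$, $O(t^{2})$ for $v_{00}$, $v_{11j}$, $\tilde{v}_{10}$ given by Lemma~\ref{Lemma_Energy_Estimate} evaluated at $t\sim\alpha^{-1}$. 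Your ``resonance'' framing is only an interpretive gloss: the paper derives the same growth rates directly from the time-integral term in the energy estimate (bounded forcing integrated over $[0,r/\alpha]$), which is exactly the quantitative mechanism you say must be made rigorous.
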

\begin{proof} We prove this theorem in two steps
\begin{enumerate}
\item{Energy estimates: First we will prove the following energy estimates}
\begin{align} \label{Eqn_Energy_Estimates_v}
\max_{|r| \leq 1} \| w(r/\alpha, \cdot) \|_{H^{2n+1}(Y)} \leq C_1 | \bs |_{\infty}  \begin{cases}
1, & w = v_{00} \\
\alpha^{-1}, & w=v_{11j} \\
\alpha^{-2}, & w = \tilde{v}_{10}. 
\end{cases}
\end{align}
An application of Lemma \ref{Lemma_Energy_Estimate} to equation \eqref{Eq_v00} gives
\begin{equation*}
\max_{|r|\leq 1} \| v_{00}(r/\alpha, \cdot) \|_{H^{2n+1}(Y)} \leq C \| \nabla_\by \cdot A \bs \|_{H^{2n}(Y)} \leq C |\bs|_{\infty}, \quad \forall \quad n \geq 0.
\end{equation*}
For the term $v_{11j}$, we first note that with $I := \displaystyle \max_{|z| \leq \alpha^{-1}}  \| f_{11j}(z,\cdot) \|_{H^{2n}(Y)} $
\begin{align*} 
\begin{array}{lll}
\displaystyle I \leq C \max_{|z|\leq \alpha^{-1}}  \left(  \| \nabla_\by \cdot \partial_{x_j} A \bs \|_{H^{2n}(Y)} + \| \nabla_\by \cdot  \left( \left(\partial_{x_j} A \right)  \nabla_{\by} v_{00}(z,\cdot) \right)\|_{H^{2n}(Y)} \right) \\
\displaystyle \leq  C_1 |\bs|_{\infty} + C_2 \max_{|z|\leq \alpha^{-1}} \|  v_{00}(z,\cdot) \|_{H^{2n+2}(Y)} \leq C |\bs|_{\infty}, \quad \forall \quad n \geq 0.
\end{array}
\end{align*}
We then apply Lemma \ref{Lemma_Energy_Estimate} to the first equation in \eqref{eqn2_Lemma_v1}. This gives
\begin{align*}
\max_{|r| \leq 1}  \| v_{11j}(r/\alpha,\cdot) \|_{H^1(Y)}   &\leq C \max_{|r|\leq 1} \int_{0}^{r/\alpha} \| f_{11j}(z,\cdot) \|_{L^2(Y)} dz \leq C \alpha^{-1} |\bs|_{\infty}.
\end{align*}
Now we give an estimate for $\tilde{v}_{10}$. For this, we first recall from \eqref{Eqn_vtilde10} and \eqref{eqn3_Lemma_v1} that
$$
\tilde{M}[v_{11}] := M[v_{11}] - \overline{M[v_{11}]}, \quad \tilde{f}_{10}(t,\by) = f_{10}(t,\by)  - \overline{f_{10}(t,\cdot)},
$$  
where
\begin{align*}
f_{10} &= \nabla_{\bx} \cdot A \bs  + \left( \nabla_{\bx} \cdot A \right) \cdot \nabla_\by v_{00} \nonumber \\
M[v_{11}]&= \sum_{j=1}^{d} \nabla_\by \cdot \left( A e_{j} v_{11j} \right) + \sum_{j=1}^{d }e_{j}^{T} A \nabla_\by v_{11j}.
\end{align*}
It follows that
\begin{align} \label{Eqn_f10_Estimate}
\max_{|r|\leq 1}   \| f_{10}(r/\alpha,\cdot)\|_{H^{2n}(Y)}  &\leq C \left( \|  \nabla_\bx \cdot A \bs\|_{H^{2n}(Y)}  + \max_{|r|\leq 1}   \| v_{00}(r/\alpha,\cdot)\|_{H^{2n+1}(Y)}   \right) \\ &\leq C |\bs|_{\infty}, \nonumber
\end{align}
and
\begin{align*}
\max_{|r|\leq 1}   \| \tilde{f}_{10}(r/\alpha,\cdot)\|_{H^{2n}(Y)}  &\leq \max_{|r|\leq 1}   \| f_{10}(r/\alpha,\cdot)\|_{H^{2n}(Y)}  + \max_{|r|\leq 1}   \| \overline{f_{10}(r/\alpha,\cdot)}\|_{H^{2n}(Y)} \\
&\leq 2  \max_{|r|\leq 1}   \|f_{10}(r/\alpha,\cdot)\|_{H^{2n}(Y)} \leq C |\bs|_{\infty}.
\end{align*}
The last inequality is due to the estimate for $v_{00}$ in \eqref{Eqn_Energy_Estimates_v} and the inequality \eqref{Eqn_f10_Estimate}. Moreover, we use the estimate for $v_{11j}$ in \eqref{Eqn_Energy_Estimates_v} to see that
\begin{align*}
\max_{|r|\leq 1} \| \tilde{M}[v_{11}](r/\alpha,\cdot)\|_{H^{2n}(Y)}  &\leq C \max_{|r|\leq 1} \sum_{j=1}^{d} \| \tilde{M}[v_{11j}](r/\alpha,\cdot)\|_{H^{2n+1}(Y)} \leq C \alpha^{-1}|\bs|_{\infty}.
\end{align*}
We then apply Lemma \ref{Lemma_Energy_Estimate} to \eqref{Eqn_vtilde10} for $II :=\max_{|r| \leq 1}  \| \tilde{v}_{10}(r/\alpha,\cdot) \|_{H^{2n+1}(Y)}$,
\begin{align*}
II   &\leq C \max_{|r|\leq 1} \int_{0}^{r/\alpha} \| \tilde{M}[v_{11}](z,\cdot)\|_{H^{2n}(Y)} + \| \tilde{f}_{10}(z,\cdot)\|_{H^{2n}(Y)} \; dz \\
&\leq C \alpha^{-1} \left(  \max_{|r|\leq 1} \| \tilde{M}[v_{11}](r/\alpha,\cdot)\|_{H^{2n}(Y)} + \max_{|r|\leq 1}   \| \tilde{f}_{10}(r/\alpha,\cdot)\|_{H^{2n}(Y)}  \right)\leq C \alpha^{-2} |\bs|_{\infty}.
\end{align*}
\item{Equations for time averages:} To derive the first equation in \eqref{Eqn_TimeAverages_d} we apply Theorem \ref{Thm_Time_Averaging_Wave_Equation} to \eqref{Eq_v00}. We immediately see that, since the forcing $\nabla_\by \cdot A\bs$ is time-independent, 
\begin{align*}
L[d_{00}] &= -\sum_{\ell=0}^{q/2 - 1} L^{-\ell} K_{\tau} \ast \partial_t^{2\ell} \left( \nabla_\by \cdot A \bs \right) + \alpha^{q} R_{00,0}(\by) \\
&=  -\nabla_\by \cdot A \bs + \alpha^{q} R_{00,0}(\by).
\end{align*}
We define now $Z_{00}:= R_{00,0}$. Then by Theorem \ref{Thm_Time_Averaging_Wave_Equation} and step $1$ we obtain 
$$
\| Z_{00} \|_{H^1(Y)} \leq C \max_{|r|\leq 1} \| v_{00}(r/\alpha,\cdot) \|_{L^2(Y)} \leq C_1 |\bs|_{\infty}. 
$$
Now we focus on $d_{11j}$. In this case, we apply Theorem \ref{Thm_Time_Averaging_Wave_Equation} to \eqref{eqn2_Lemma_v1}. This gives
\begin{align*}
L[d_{11j}] &= -\sum_{\ell=0}^{q/2-1} L^{-\ell} K_{\tau} \ast \partial_t^{2\ell} f_{11j}(\cdot/\e,\by)(0) + \alpha^{q} R_{11j,0}(\by) \\
&= -\sum_{\ell=0}^{q/2-1} L^{-\ell} K_{\tau} \ast \partial_t^{2\ell} \left( \nabla_\by \cdot \partial_{x_j} A\bs  + \nabla_\by \cdot \left( \left( \partial_{x_j}A\right) \nabla_\by v_{00}\right) \right) + \alpha^q R_{11j,0}(\by) \\
&= -\nabla_\by \cdot \partial_{x_j} A\bs - \nabla_\by \cdot \left( \left( \partial_{x_j}A\right) \nabla_\by d_{00}\right) +  \alpha^{q} Z_{11j}(\by)
\end{align*}
where we use the notation $d_{00}^{\{2\ell \}}(\by) := K_{\tau} \ast \partial_t^{2\ell} v(\cdot/\e,\by)$ from Theorem \ref{Thm_Time_Averaging_Wave_Equation} and define
$$
Z_{11j}:= -\alpha^{-q} \sum_{\ell=1}^{q/2-1} L^{-\ell} \nabla_\by \cdot  \left( \left( \partial_{x_j}A\right) \nabla_\by d_{00}^{\{ 2\ell \}} \right)+ R_{11j,0}(\by).
$$
Now we estimate $Z_{11j}$. From step $1$ and by Theorem \ref{Thm_Time_Averaging_Wave_Equation} we have 
\begin{equation} \label{Bound_R11j}
\| R_{11j,0} \|_{H^1(Y)} \leq C \max_{|r|\leq 1} \| v_{11j}(r/\alpha, \cdot) \|_{L^2(Y)} \leq C |\bs|_{\infty} \alpha^{-1}.
\end{equation}
Furthermore, let 
\begin{equation} \label{Eqn_psi0}
\psi_{\ell}^{0}:=L^{-\ell} \nabla_y \cdot  \left( \left( \partial_{x_j}A\right) \nabla_\by d_{00}^{\{ 2\ell \}} \right), 
\end{equation}
then by Lemma \ref{Lemma_Elliptic_Regularity} we have
\begin{equation} \label{Estimate_psi0}
\| \psi_{\ell}^0 \|_{H^{2\ell}(Y)} \leq C \| \nabla_\by \cdot \left( \left( \partial_{x_j} A \right) \nabla_\by d_{00}^{\{2\ell\}} \right) \|_{L^2(Y)} \leq C \|d_{00}^{\{2\ell\}} \|_{H^2(Y)}.
\end{equation}
On the other hand, by Theorem \ref{Thm_Time_Averaging_Wave_Equation} and elliptic regularity we obtain
\begin{equation} \label{Eqn_d002l}
L[d_{00}^{\{2\ell\}}]  = \alpha^{q} R_{00,\ell}, 
\end{equation}
and
\begin{align}\label{Estimate_d00}
\|d_{00}^{\{2\ell\}} \|_{H^2(Y)} \leq C \alpha^q \| R_{00,\ell} \|_{L^2(Y)} \leq C \alpha^q \max_{|r|\leq 1} \| v_{00}(r/\alpha,\cdot) \|_{H^1(Y)} \leq C \alpha^q |\bs|_{\infty}.
\end{align}
Therefore $\| \psi_{\ell}^0\|_{H^2(Y)} \leq C \alpha^q |\bs|_{\infty}$, and it follows that
\begin{align*}
\| Z_{11j} \|_{H^1(Y)} \leq  \| R_{11j,0} \|_{H^1(Y)} + \alpha^{-q} \sum_{\ell=1}^{q/2 - 1} \| \psi_{\ell} \|_{H^1(Y)}  \leq C_1 \alpha^{-1} |\bs|_{\infty}.
\end{align*}
Now we concentrate on the term $d_{10}$. First we introduce short hand notations:
\begin{align*}
\psi_{\ell}^{1} &= L^{-\ell} \left( \left( \nabla_\bx \cdot A \right) \cdot \nabla_\by d_{00}^{\{2\ell \}} - \overline{\left( \nabla_\bx \cdot A \right) \cdot \nabla_\by d_{00}^{2\ell}}   \right) \\
\psi_{\ell}^{2} &= L^{-\ell} \left( M[d_{11}^{\{  2\ell\} }]  - \overline{M[d_{11}^{\{  2\ell\} }]} \right),
\end{align*}
where the $M[d_{11}]$ is understood as replacing $v$ with $d$ in \eqref{eqn3_Lemma_v1}. Then by Theorem \ref{Thm_Time_Averaging_Wave_Equation} we have 
\begin{align*}
L[d_{10}] &= -\sum_{\ell=0}^{q/2-1} L^{-\ell} K_{\tau} \ast \partial_t^{2\ell} \left( \tilde{f}_{10}(\cdot/\e,\by)(0) + \tilde{M}[v_{11}(\cdot/\e, \by)] \right)+ \alpha^{q} R_{10}(\by) \\
&= -\nabla_\bx \cdot A \bs - \left( \nabla_\bx \cdot A\right) \cdot \nabla_\by d_{00} - M[d_{11}] + K  + \alpha^{q} Z_{10}(\by),
\end{align*}
where $K$ and $Z_{10}$ are defined as
\begin{align} \label{Eqn_Z10}
K&:= \overline{\nabla_\bx \cdot A \bs   + \left( \nabla_\bx \cdot A\right) \cdot \nabla_\by d_{00}  + M[d_{11}] }, \nonumber \\
Z_{10}(\by) &:=  R_{10}(\by) + \alpha^{-q} \sum_{\ell=1}^{q/2-1} \left( \psi_{\ell}^{1} +  \psi_{\ell}^{2}\right). 
\end{align}
In order to bound $Z_{10}$, we first see from Theorem \ref{Thm_Time_Averaging_Wave_Equation} and the first part in this proof that
\begin{align*}
\|  R_{10}  \|_{H^1(Y)} \leq  C \max_{|r| \leq 1} \| \tilde{v}_{10}(r/\alpha,\cdot)  \|_{L^2(Y)} \leq C \alpha^{-2} |\bs|_{\infty}.
\end{align*}
Next by Lemma \ref{Lemma_Elliptic_Regularity}, we have
\begin{align*}
\|  \psi_{\ell}^{1}  \|_{H^{2\ell}(Y)} &\leq  C  \left\|  \left( \nabla_\bx \cdot A \right) \cdot \nabla_\by d_{00}^{\{2\ell \}} - \overline{\left( \nabla_\bx \cdot A \right) \cdot \nabla_\by d_{00}^{\{2\ell \}}}   \right\|_{L^2(Y)}  \\ 
&\leq C \left\| d_{00}^{\{ 2\ell \}}  \right\|_{H^2(Y)} \leq C \alpha^{q} |\bs|_{\infty}.
\end{align*}
and
\begin{align*}
\|  \psi_{\ell}^{2}  \|_{H^{2\ell}(Y)} \leq  C  \left\|   M[d_{11}^{\{  2\ell\} }]  - \overline{M[d_{11}^{\{  2\ell\} }]}  \right\|_{L^2(Y)} \leq C \sum_{j=1}^{d} \left\| d_{11j}^{\{ 2\ell \}}  \right\|_{H^1(Y)}. 
\end{align*}
But by Theorem \ref{Thm_Time_Averaging_Wave_Equation} and equation \eqref{Eqn_psi0} we have 
\begin{align*}
L[d_{11j}^{\{ 2 \ell \} }] = L^{\ell} \sum_{k=\ell}^{q/2-1} \psi_{k}^{0} + \alpha^{q} R_{11j,\ell},
\end{align*}
where similar to \eqref{Bound_R11j}, it holds that
\begin{align*}
\| R_{11j,\ell} \|_{H^1(Y)} \leq C |\bs|_{\infty} \alpha^{-1}.
\end{align*}
We apply now Lemma \ref{Lemma_Elliptic_Regularity}, use the fact that $\ell \leq k$ and obtain
\begin{equation*}
\begin{array}{lll}
\left\|  d_{11j}^{\{ 2\ell \}}   \right\|_{H^1(Y)} &\leq& C_1 \sum_{k=\ell}^{q/2-1} \left\|  L^{\ell}  \psi_{k}^{0}  \right\|_{L^2(Y)} + C_2 \alpha^{q-1} |\bs|_{\infty}  \\ &\leq& C_1 \sum_{k=\ell}^{q/2-1} \left\|  \psi_{k}^{0}  \right\|_{H^{2\ell}(Y)}  + C_2 \alpha^{q-1} |\bs|_{\infty} \\ &\leq& C_1 \sum_{k=\ell}^{q/2-1} \left\|  \psi_{k}^{0}  \right\|_{H^{2k}(Y)}  + C_2 \alpha^{q-1} |\bs|_{\infty} \leq C \alpha^{q-1} |\bs|_{\infty}. 
\end{array}
\end{equation*}
Note that we used estimates \eqref{Estimate_psi0} and \eqref{Estimate_d00} in the last step. Therefore,
\begin{align*}
\| Z_{10} \|_{H^1(Y)} \leq C \alpha^{-2} |\bs|_{\infty}.
\end{align*}
\end{enumerate}
\end{proof}

\subsection{Decomposition of the flux}
We consider first a truncated version of the expansion \eqref{Asymptotic_Expansion_2D} by taking only $v_0$ and $v_1$ into consideration. We denote the truncated microscopic solution by $\tilde{u}^{\e,\eta}(t,\bx)$. Then by the scaling introduced in Subsection \ref{SubSec_Expansion} we have
\begin{align}\label{Eqn_TruncatedExpansion}
\tilde{u}^{\e,\eta}(t,\bx) &= \e v_0(t/\e,\bx/\e) + \e^2 v_1(t/\e,\bx/\e) \nonumber \\
&=  \left( \bs \cdot \bx  + \e v_{00}(t/\e,\bx/\e) \right) + \e^2 v_{10}(t/\e,\bx/\e) + \e \sum_{j=1}^{d} x_j v_{11j}(t/\e,\bx/\e). 
\end{align}
Using the notation $A(\bx,\by) = A_{\br_0,\gamma}(\bx,\by)$, the HMM flux in \eqref{Analysis_HMM_Flux} can be written as
\begin{align} \label{Flux_HMM_Split}
\bF(\br_0) &= \left( \mathcal{K}_{\tau,\eta} \ast A(\cdot,\cdot/\e)\nabla\tilde{u}^{\e,\eta}(\cdot,\cdot) \right)(0,0) + \underbrace{\left( \mathcal{K}_{\tau,\eta} \ast A(\cdot,\cdot/\e)\left( \nabla u^{\e,\eta}-\nabla\tilde{u}^{\e,\eta} \right) \right)(0,0)}_{\mathcal{E}_{tail}}  \nonumber  \\ &= \left(  K_{\eta} \ast A(\cdot,\cdot/\e) \nabla \underbrace{K_{\tau} \ast \tilde{u}^{\e,\eta}(\cdot,\cdot)}_{:=d^{\e}(x)}\right)(0,0) + \mathcal{E}_{tail}.
\end{align}  
On the other hand, we apply $K_{\tau}$ to \eqref{Eqn_TruncatedExpansion} and obtain
\begin{align*}
d^{\e}(\bx) = \bs \cdot \bx  +  \e d_{00}( \bx/\e)  + \e^{2} d_{10}( \bx/\e) + \e^{2} \left(K_{\tau} \ast g(\cdot/\e)\right)(0) +  \e \sum_{j=1}^{d} x_j d_{11j}( \bx/\e).
\end{align*}
Then we can rewrite the HMM flux as
\begin{align}\label{Eqn_HMM_Flux_Decomposed}
\bF = \bF_{0} + \e \bF_{1} + \delta + \mathcal{E}_{tail},
\end{align}
where 
\begin{align*}
\bF_{0}(\br_0) &= \left( K_{\eta} \ast  A(\cdot,\cdot /\e)\left(\bs  + \nabla_\by d_{00}(\cdot/\e) \right) \right) (0) \\
\bF_{1}(\br_0) &= \left( K_{\eta} \ast  A(\cdot,\cdot /\e)\left(\nabla_\by d_{10}(\cdot/\e)  + d_{11}(\cdot/\e)\right) \right)(0) \\
\delta(\br_0) &= \left(  K_{\eta}\ast \sum_{j=1}^{d}x_{j} A(\cdot,\cdot/\e) \nabla_\by d_{11j}(\cdot/\e) \right)(0).
\end{align*}
We will first bound the tail $\mathcal{E}_{tail}$ in the following lemma.
\begin{Lemma}\label{Lemma_Tail_Flux_Estimate}
Suppose that $\mathcal{E}_{tail}$ is defined as in \eqref{Flux_HMM_Split}, with $ 0 < \e \leq \eta$ and $\tau = \eta$. Then
$$
\left| \mathcal{E}_{tail}  \right|\leq C \e^{-5} \eta^{7} \left| \bs \right|_{\infty},
$$
where $C$ does not depend on $\e, \eta$ but may depend on $A$. 
\end{Lemma}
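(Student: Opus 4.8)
The plan is to estimate $\mathcal{E}_{tail}$ directly from its integral representation, leaning entirely on the tail bound already established in Corollary \ref{Lemma_Tail_Estimate}. The first observation is that the truncated solution $\tilde{u}^{\e,\eta}$ appearing in \eqref{Flux_HMM_Split} coincides with the $\tilde{u}^{\e,\eta}_1$ of that corollary, so that $e^{\e,\eta} = u^{\e,\eta}-\tilde{u}^{\e,\eta}$ is exactly the quantity controlled there. Writing out the convolution, I would start from
\begin{equation*}
\mathcal{E}_{tail} = \int_{-\tau}^{\tau}\int_{\Omega_{0,\eta}} K_\eta(\bx)K_\tau(t)\, A(\bx+\br_0,\bx/\e+\br_0/\e)\,\nabla e^{\e,\eta}(t,\bx)\, d\bx\, dt,
\end{equation*}
and immediately bound the matrix $A$ by its $L^\infty$ norm, reducing the task to controlling $\int\int |K_\eta(\bx)|\,|K_\tau(t)|\,|\nabla e^{\e,\eta}|$.

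The decisive step is a Cauchy--Schwarz estimate in the spatial variable. Since $K_\eta$ is supported in $[-\eta,\eta]^d\subset\Omega_{0,\eta}$, for each fixed $t$ one has $\left|\int_{\Omega_{0,\eta}}K_\eta(\bx)A\nabla e^{\e,\eta}\,d\bx\right|\leq C\,\|K_\eta\|_{L^2(\Omega_{0,\eta})}\,\|\nabla_\bx e^{\e,\eta}(t,\cdot)\|_{L^2(\Omega_\eta)}$. The product structure $K_\eta(\bx)=\prod_{i=1}^d K_\eta(x_i)$ together with the scaling $K_\eta(x)=\eta^{-1}K(x/\eta)$ gives $\|K_\eta\|_{L^2(\mathbb{R}^d)}=\eta^{-d/2}\|K\|_{L^2(\mathbb{R})}^d$, i.e.\ a factor $C\eta^{-d/2}$. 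For the time integration I would use $\|K_\tau\|_{L^1(\mathbb{R})}=\|K\|_{L^1(\mathbb{R})}=O(1)$ to pull out the supremum over $t$, after first extending the corollary's bound from $(0,\tau]$ to $[-\tau,\tau]$ via the symmetry $u^{\e,\eta}(t,\bx)=u^{\e,\eta}(-t,\bx)$ (and likewise for $\tilde{u}^{\e,\eta}$). This yields
\begin{equation*}
|\mathcal{E}_{tail}|\leq C\,\eta^{-d/2}\sup_{t\in(0,\tau]}\|\nabla_\bx e^{\e,\eta}(t,\cdot)\|_{L^2(\Omega_\eta)}.
\end{equation*}

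It then remains to insert Corollary \ref{Lemma_Tail_Estimate} with $\tau=\eta$. Substituting $\tau/\e=\eta/\e$ collapses the two terms of that estimate into a single $(\eta/\e)^{d/2+7}$, so that $\sup_t\|\nabla_\bx e^{\e,\eta}\|_{L^2(\Omega_\eta)}\leq C\e^{2+d/2}(\eta/\e)^{d/2+7}|\bs|_\infty=C\e^{-5}\eta^{d/2+7}|\bs|_\infty$, the exponent of $\e$ simplifying as $2+d/2-(d/2+7)=-5$. Multiplying by the $\eta^{-d/2}$ coming from the kernel cancels the dimension-dependent factor exactly and produces $|\mathcal{E}_{tail}|\leq C\e^{-5}\eta^{7}|\bs|_\infty$, as claimed. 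The computation is short, so there is no genuine obstacle; the only point requiring care---and the reason the final bound is dimension-free---is that the $L^2$ mass $\eta^{-d/2}$ of the averaging kernel is precisely what is needed to absorb the $\eta^{d/2}$ produced by the $L^2$-in-space tail estimate, whence the dimension $d$ disappears from the statement.
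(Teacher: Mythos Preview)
Your proposal is correct and follows essentially the same route as the paper: write out the convolution, apply Cauchy--Schwarz in the spatial variable (producing the $\eta^{-d/2}$ kernel factor), use the $L^1$ bound on $K_\tau$ in time, and then invoke Corollary~\ref{Lemma_Tail_Estimate} with $\tau=\eta$ so that the $d$-dependent exponents cancel. Your remark on the time symmetry and on why the dimension disappears is a slightly more explicit accounting than the paper gives, but the argument is the same.
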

\begin{proof} By  definition we have
\begin{align*} 
\mathcal{E}_{tail} &= \dfrac{1}{\tau} \dfrac{1}{\eta^{d}} \int_{-\tau}^{\tau} \int_{\Omega_{\eta}} K(t/\tau) K(\bx/\eta) A(\bx,\bx/\e) \left(  \nabla u^{\e,\eta}  - \nabla \tilde{u}^{\e,\eta} \right) \; d\bx \; dt  \\
&\leq \dfrac{1}{\tau} \dfrac{1}{\eta^{d}} \int_{-\tau}^{\tau}  \left| K(t/\tau) \right| \left| \int_{\Omega_{\eta}} K(\bx/\eta) A(\bx,\bx/\e) \left(  \nabla u^{\e,\eta}  - \nabla \tilde{u}^{\e,\eta} \right) \; d\bx \right| \; dt \\
&\leq  C \dfrac{1}{\tau} \dfrac{1}{\eta^{d}} \int_{-\tau}^{\tau}  \left| K(t/\tau) \right|   \sup_{t \in [0,\tau]} \| \nabla u^{\e,\eta} - \nabla \tilde{u}^{\e,\eta} \|_{L^2(\Omega_{\eta})} \eta^{d/2} \; dt \\
&\leq C \eta^{-d/2} \sup_{t\in [0,\tau]} \| \nabla u^{\e,\eta} - \nabla \tilde{u}^{\e,\eta} \|_{L^2(\Omega_{\eta})} \leq C   \e^{2} \left( \dfrac{\eta}{\e} \right)^{3} \left( \dfrac{\tau}{\e} \right)^{4} \left| \bs \right|_{\infty} \leq \e^{-5} \eta^{7} \left| \bs \right|_{\infty}.
\end{align*}
In the last two inequalities we used Corollary \ref{Lemma_Tail_Estimate} and the fact that $\tau=  \eta$, respectively.
\end{proof}
Before proving Theorem \ref{Thm_Main_Thm}, we  introduce some intermediate results.
\begin{Lemma}\label{Lemma_d00Chi_Estimate}
Suppose that $d_{00}$ is given by \eqref{Eqn_TimeAverages_d} with  $\br_0=\bx$. Moreoever let the assumptions of Theorem \ref{Thm_TimeAverages_d} be satisfied and
$$
z_{00}(\bx,\by) = \sum_{\ell=1}^{d} s_{\ell} \chi_{\ell}(\bx,\by),
$$
where $\chi_{\ell}$ is the solution of the cell problems \eqref{eqn_CellProblemChi}. Then
\begin{align*}
\left\| d_{00}(\bx,\cdot)  - z_{00}(\bx,\cdot) \right\|_{H^{1}(Y)} \leq C \alpha^{q} \left| \bs \right|_{\infty},
\end{align*} 
where $\alpha = \e/\eta$ and $C$ does not depend on $\alpha, \e ,\eta$ but may depend on $A,p,q$ or $K$.
\end{Lemma}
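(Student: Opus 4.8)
The plan is to exploit that both $d_{00}(\bx,\cdot)$ and $z_{00}(\bx,\cdot)$ solve the \emph{same} periodic elliptic cell equation up to an $O(\alpha^q)$ forcing, and then to invoke the elliptic stability estimate of Lemma~\ref{Lemma_Elliptic_Regularity}.

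First I would identify the equation solved by $z_{00}$. Writing $\bs = \sum_{\ell=1}^d s_\ell e_\ell$ and summing the cell problems \eqref{eqn_CellProblemChi} weighted by $s_\ell$, linearity gives
$$
L[z_{00}] = \nabla_\by \cdot \Big( A(\bx,\cdot)\nabla_\by z_{00} \Big) = -\nabla_\by \cdot \big(A(\bx,\cdot)\,\bs\big),
$$
and moreover $\overline{z_{00}} = \sum_\ell s_\ell \,\overline{\chi_\ell} = 0$, since each corrector has zero mean by \eqref{eqn_CellProblemChi}. Here one must take care that the operator $L$ and the coefficient $A$ appearing in \eqref{Eqn_TimeAverages_d} carry the periodic shift $\gamma = \{\br_0/\e\}$ coming from the convention of Remark~\ref{Rem_Locally_Periodic}; with $\br_0 = \bx$ the same shift must be used in the correctors $\chi_\ell$, so that the two elliptic operators genuinely coincide.

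Next I would recall the first line of \eqref{Eqn_TimeAverages_d} in Theorem~\ref{Thm_TimeAverages_d}, namely that $d_{00}$ satisfies
$$
L[d_{00}] = -\nabla_\by \cdot \big(A(\bx,\cdot)\,\bs\big) + \alpha^q Z_{00}(\by), \qquad \overline{d_{00}} = 0,
$$
where the zero-mean property holds because $d_{00}$ is a time average of $v_{00}$, whose forcing in \eqref{Eq_v00} is a divergence and hence mean-free. Subtracting the two equations cancels the common forcing $-\nabla_\by \cdot (A\bs)$ and leaves
$$
L[d_{00} - z_{00}] = \alpha^q Z_{00}, \qquad \overline{d_{00} - z_{00}} = 0.
$$

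Finally, since $Z_{00}$ is $Y$-periodic with zero average, I would apply Lemma~\ref{Lemma_Elliptic_Regularity} with $n=1$ and $k=1$ to the mean-free function $w := d_{00} - z_{00}$, solving $L[w] = \alpha^q Z_{00}$, to obtain $\| w \|_{H^3(Y)} \leq C \alpha^q \| Z_{00} \|_{H^1(Y)}$; the bound \eqref{Eqn_Z_Estimates} on $Z_{00}$ then yields $\| w \|_{H^1(Y)} \leq \| w \|_{H^3(Y)} \leq C \alpha^q |\bs|_{\infty}$, which is the claim. The argument is essentially a single stability estimate for $L$, and the only genuinely delicate point is the bookkeeping in the first step: verifying that $z_{00}$ solves exactly the leading-order equation driving $d_{00}$ (matching coefficients, including the $\gamma$-shift, and checking that both functions are mean-free so that Lemma~\ref{Lemma_Elliptic_Regularity} applies).
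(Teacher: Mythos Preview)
Your proof is correct and follows essentially the same route as the paper: derive $L[z_{00}] = -\nabla_\by\cdot(A\bs)$ from the cell problems, subtract it from the first equation in \eqref{Eqn_TimeAverages_d} to get $L[d_{00}-z_{00}] = \alpha^q Z_{00}$, and apply the elliptic regularity Lemma~\ref{Lemma_Elliptic_Regularity} together with the bound \eqref{Eqn_Z_Estimates}. The only cosmetic difference is that the paper invokes Lemma~\ref{Lemma_Elliptic_Regularity} with $k=0$ (giving an $H^2$ bound via $\|Z_{00}\|_{L^2}$) whereas you use $k=1$; both yield the desired $H^1$ estimate.
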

\begin{proof} From \eqref{eqn_CellProblemChi}, we see that
\begin{align} \label{Eqn_TimeAverages_z}
L[z_{00}] &= -\nabla_y \cdot A \bs 
\end{align}
Note the similarity between \eqref{Eqn_TimeAverages_z} with the first equation in \eqref{Eqn_TimeAverages_d}. Using these two equations we readily see that
\begin{equation*}
L[d_{00}  - \sum_{\ell=1}^{d} s_{\ell} \chi_{\ell}] = \alpha^{q} Z_{00}(\bx,\by).
\end{equation*}
Therefore by elliptic regularity, Lemma \ref{Lemma_Elliptic_Regularity}, and the estimate \eqref{Eqn_Z_Estimates} we have
\begin{align*}
\left\| d_{00}(\bx,\cdot)  - z_{00}(\bx,\cdot) \right\|_{H^2(Y)} \leq \alpha^{q} \left\| Z_{00}(\bx,\cdot) \right\|_{L^2(Y)} \leq C |\bs|_{\infty} \alpha^{q}.
\end{align*}
\end{proof}
Now we present a lemma from  \cite{Arjmand_Runborg_1} which concerns the local averages of locally-periodic functions.
\begin{Lemma} \label{Lemma_Kernel}
Let $f$ be a $1$-periodic continuous function and $K\in \mathbb{K}^{p,q}$. Then, with $\alpha = \varepsilon/\eta \leq 1$, and $\bar{f} = \int_{0}^{1} f(s) ds$ 
\begin{equation*}
\left| \int_{\mathbb{R}} K_{\eta}(t) f(t/\varepsilon) dt - \bar{f} \right| \leq C |f|_{\infty} \alpha^{q+2}, 
\end{equation*}
\noindent and when $r \in \mathbb{Z}^{+}$ and $\eta <1$,
\begin{equation*}
\left| \int K_{\eta}(t) t^r f(t/\varepsilon) dt \right| \leq C 
\begin{cases}
|f|_{\infty} \alpha^{q+2} \eta^r & 1\leq r \leq p \\
|f|_{\infty}  \alpha^{q+2} \eta^r  + |\bar{f}| \eta^r & r >p,
\end{cases}
\end{equation*}
\noindent where the constant $C$ does not depend on $\varepsilon$, $\eta$, $f$ or $s$ but may depend on $K,p,q,r$.
\end{Lemma}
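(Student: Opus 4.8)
The plan is to prove both estimates by repeatedly transferring the periodic oscillation onto the kernel through integration by parts, gaining one power of $\alpha=\e/\eta$ at each step, and then using the regularity budget of $K$ (namely $K\in C^q$ with $K^{(q+1)}\in BV$) to perform exactly $q+2$ such steps.

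For the first estimate I would use the normalization $\int_{\mathbb{R}} K_\eta(t)\,dt=1$ (the $r=0$ moment) to write $\bar f=\int K_\eta(t)\bar f\,dt$, reducing the claim to bounding $\int K_\eta(t)\big(f(t/\e)-\bar f\big)\,dt$, where $f_0:=f-\bar f$ has zero mean over $Y$. Because $f$ is continuous and $1$-periodic, $f_0$ admits a sequence of $1$-periodic antiderivatives $F_1,\dots,F_{q+2}$, each chosen with zero mean (possible precisely because each $F_j$ has zero mean, which makes $F_{j+1}$ periodic), and each bounded by $\|F_j\|_\infty\le C|f|_\infty$ with $C$ depending only on $j\le q+2$. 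Writing $f_0(t/\e)=\e^{q+1}\frac{d^{q+1}}{dt^{q+1}}F_{q+1}(t/\e)$ and integrating by parts $q+1$ times, which is legitimate since $K\in C^q$ forces $K_\eta^{(q+1)}\in L^1$ and all boundary terms vanish by compact support, gives
\[
\int K_\eta(t)f_0(t/\e)\,dt=(-1)^{q+1}\e^{q+1}\int K_\eta^{(q+1)}(t)\,F_{q+1}(t/\e)\,dt.
\]

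The crux, and the main technical point, is extracting the final $(q+2)$-th power of $\alpha$ rather than stopping at $\alpha^{q+1}$. Here I would use that $F_{q+1}$ has zero mean, so $F_{q+1}(t/\e)=\e\,\frac{d}{dt}F_{q+2}(t/\e)$, together with $K^{(q+1)}\in BV$, which makes $K_\eta^{(q+1)}\in BV$ and permits one more Riemann--Stieltjes integration by parts:
\[
\int K_\eta^{(q+1)}(t)F_{q+1}(t/\e)\,dt=-\e\int F_{q+2}(t/\e)\,dK_\eta^{(q+1)}(t).
\]
Since $\mathrm{TV}\big(K_\eta^{(q+1)}\big)=\eta^{-(q+2)}\mathrm{TV}\big(K^{(q+1)}\big)$ and $\|F_{q+2}\|_\infty\le C|f|_\infty$, combining the two displays yields $\big|\int K_\eta f_0(t/\e)\,dt\big|\le C|f|_\infty\,\e^{q+2}\eta^{-(q+2)}=C|f|_\infty\alpha^{q+2}$. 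The essential obstacle is thus isolated: the $C^q$ smoothness alone buys only $\alpha^{q+1}$, and the extra power comes exactly from pairing the zero mean of the last antiderivative $F_{q+1}$ with the bounded variation of the top derivative of $K$.

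For the second estimate I would absorb the polynomial weight into the kernel by setting $\tilde K_r(s):=s^r K(s)$, so that $K_\eta(t)\,t^r=\eta^r(\tilde K_r)_\eta(t)$ with $(\tilde K_r)_\eta(t)=\tfrac1\eta\tilde K_r(t/\eta)$. One checks that $\tilde K_r\in C^q$, is supported in $[-1,1]$, and $(\tilde K_r)^{(q+1)}\in BV$ (by the Leibniz rule, a polynomial times a $BV$ function on a compact set is $BV$), so the machinery above applies verbatim with $K$ replaced by $\tilde K_r$. The only difference lies in the zeroth moment $\int\tilde K_r=\int s^rK(s)\,ds$: for $1\le r\le p$ it vanishes by the moment condition, so $\int(\tilde K_r)_\eta=0$ and the argument of the first part gives $\big|\int(\tilde K_r)_\eta f(t/\e)\,dt\big|\le C|f|_\infty\alpha^{q+2}$, whence the factor $\eta^r$ produces the first branch. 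For $r>p$ I would split $f=f_0+\bar f$: the zero-mean part again yields $C|f|_\infty\alpha^{q+2}$ through the same integration-by-parts chain, while the constant part contributes $\bar f\int(\tilde K_r)_\eta=\bar f\int s^rK(s)\,ds$, bounded by $C|\bar f|$; multiplying by $\eta^r$ gives the second branch. Throughout, the constants depend on $K,p,q,r$ but not on $\e,\eta,f$, as required.
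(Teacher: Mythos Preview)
Your argument is correct. The paper does not actually prove this lemma; it is quoted verbatim from \cite{Arjmand_Runborg_1} and stated without proof, so there is no in-paper argument to compare against. That said, your approach---building a chain of zero-mean periodic antiderivatives $F_1,\dots,F_{q+2}$ of $f-\bar f$, integrating by parts $q+1$ times against $K_\eta\in C^q$, and then extracting the last factor of $\alpha$ via a Riemann--Stieltjes step using $K^{(q+1)}\in BV$---is precisely the standard proof of this kernel-averaging estimate (it is the argument given in the cited reference and in the earlier Engquist--Tsai framework). The reduction of the weighted case to the unweighted one by setting $\tilde K_r(s)=s^rK(s)$ and checking $\tilde K_r\in C^q$, $\tilde K_r^{(q+1)}\in BV$ via Leibniz is also the expected route, and your treatment of the two regimes $1\le r\le p$ versus $r>p$ through the zeroth moment $\int s^rK(s)\,ds$ is exactly right.
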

We are now ready to present the proof of Theorem \ref{Thm_Main_Thm}.
\subsection{Proof of Theorem \ref{Thm_Main_Thm}} \label{SubSec_MainProof} Without loss of generality, we consider $\br_0=0$, {\color{blue} and to simplify the notation we write $\alpha=  \e/\eta$}. Moreover, all the upper bounds will be uniform in $\br_0$ as we have assumed that $\br_0$ belongs to a compact set. We use the decomposition of $\bF$ in the form \eqref{Eqn_HMM_Flux_Decomposed}, and split the error as follows:
\begin{align*}
|  \bF - A^{0}\bs  |_\infty \leq \underbrace{|\bF_{0}  - A^{0} \bs |_{\infty}}_{I} + \e \underbrace{| \bF_{1}|_{\infty}}_{II}  + | \delta |_{\infty} + | \mathcal{E}_{tail} |_{\infty}.  
\end{align*}
The estimate $| \mathcal{E}_{tail} |_{\infty} \leq C |\bs|_{\infty} \e^{-5} \eta^{7}$ holds by Lemma \ref{Lemma_Tail_Flux_Estimate}. Moreover, by Lemma \ref{Lemma_Kernel}, and the assumption that $p>1$, it follows that $| \delta |_{\infty} \leq C|\bs|_{\infty} \eta \alpha^{q+2} $. It remains to bound the first two terms. To bound the first term we first introduce 
$$
\tilde{F}_{0,j} :=  e_j^T \int_{Y} A(0,\by) \left( \bs + \nabla_\by d_{00}(0,\by)  \right) \;d\by.
$$
Then $\displaystyle I:= \max_{j=1,\ldots,d} I_{j}$, where
$$
I_j := \left|e_j^T \left( \bF_{0}  - \hat{A}(0) \bs \right) \right| \leq \left| e_j^T \bF_{0} - \tilde{F}_{0,j} \right| + \left| \left(  \tilde{F}_{0,j} -  e_j^T A^{0} \bs \right) \right|.
$$
By Lemma \ref{Lemma_Kernel} we have
$
\left| e_j^T \bF_{0} -  \tilde{F}_{0,j} \right| \leq C |\bs|_{\infty}  \alpha^{q+2}.
$
Moreover, by \eqref{eqn_C_HatA_Def}
$$
e_j^{T} A^{0}(0) \bs = e_j^{T} \int_{Y} A(0,\by) \left( \bs  + \nabla_\by \sum_{\ell=1}^{d} s_{\ell} \chi_{\ell} \right) \; d\by.
$$
Therefore,
\begin{align*}
\left| \tilde{F}_{0,j} - e_{j}^{T} A^{0} \bs \right|  &= \left| e_{j}^T \int_{Y} A(0,\by) \nabla_\by \left( d_{00} - \sum_{\ell=1}^{d} s_{\ell} \chi_{\ell}(0,y)  \right) \; d\by \right| \\
&\leq C \| d_{00}(0,\cdot) -  \sum_{\ell=1}^{d} s_{\ell} \chi_{\ell}(0,\cdot) \|_{H^1(Y)} \leq C \alpha^{q},
\end{align*}
where we used Lemma \eqref{Lemma_d00Chi_Estimate} in the last inequality. This proves that $I \leq C \alpha^{q} |\bs|_{\infty}$.  To bound the term $II$, we first introduce
$$
\tilde{F}_{1,j}=  e_j^T \int_{Y} A(0,\by) \left( \nabla_\by d_{10}(0,\by)  + d_{11}(0,\by)  \right) \; d\by.
$$
Then we can write $\displaystyle II:= \max_{j=1,\ldots,d} II_j $, where
$$
II_j = |e_j^{T}   \bF_{1} - \tilde{F}_{1,j}|_{\infty} + |  \tilde{F}_{1,j} |_{\infty}
$$
For the first term, we use Lemma \ref{Lemma_Kernel} to obtain $|e_j^{T}   \bF_{1} - \tilde{F}_{1,j}|_{\infty} \leq C |\bs|_{\infty} \alpha^{q+2}$. For the second term, however, we apply elliptic regularity, Lemma \ref{Lemma_Elliptic_Regularity}, to equations in \eqref{Eqn_TimeAverages_d} and see that 
$$
 |  \tilde{F}_{1,j} |_{\infty} \leq C  |\bs|_{\infty}.
$$
 To complete the proof we now show that $|\tilde{F}_{1,j}| \leq C \alpha^{q}$ when $d=1$. For this we take the $y$ derivative of the last equation in \eqref{Eqn_TimeAverages_d} and use the equation for $d_{11}:=d_{11j}$ (in one dimension  we have $j=1$) and obtain:
\begin{align*}
\partial_y^{2} \left( A \left( \partial_y d_{10}  + d_{11}  \right) \right) &= -\partial_y \left(  A \partial_y d_{11}  \right) - \partial_{y} \partial_x A s - \partial_y \left(  \left( \partial_x A \right) \partial_y d_{00} \right) + \alpha^{q} \partial_y Z_{00}(y) \\
&= \alpha^{q}  \left(  \partial_y Z_{00}- Z_{111}\right).
\end{align*}
Let $\chi$ be the solution of the cell-problem \eqref{eqn_CellProblemChi} in one dimension then $\partial_y \chi = -1 + A^{0} A^{-1}(0,y)$. From here, for a smooth and $1$-periodic function $u$  we easily obtain
$$\langle \chi  , \partial_y  \left( A u \right)\rangle  = \int_Y A(0,y) u(y) \; dy  - A^{0} \bar{u}.$$
Next we define $h(z):= \int_{0}^{z} \chi(y) \; dy$, then $h$ is $1$-periodic since $\bar{\chi} =0$. We use the above relation for $u = \partial_y d_{00} + d_{11}$ and exploit the fact $\overline{\partial_y d_{00} + d_{11}}=0$ to see that 
\begin{align*}
\left| F_{1,j} \right| & := \left| \int_{Y} A(0,y) \left(  \partial_y d_{00} + d_{11} \right) \; dy \right|  = \left| \langle  \chi, \partial_y \left(  A \left(\partial_y d_{00} + d_{11} \right) \right) \rangle \right| \\ 
& =  \left| \langle h , \partial_y^{2} \left(  A \left(\partial_y d_{00} + d_{11} \right) \right) \rangle \right| = \left| \langle h , \alpha^{q}  \left(  \partial_y Z_{00}- Z_{111}\right) \rangle \right| \\ &\leq \alpha^{q}\| h \|_{L^2(Y)} \left( \| Z_{00} \|_{H^1(Y)}  + \|Z_{111} \|_{H^1(Y)}   \right) \leq C \alpha^{q-1} |s|.
\end{align*}
Note that in the last inequality we used the estimates \eqref{Eqn_Z_Estimates}. The proof of the Theorem \ref{Thm_Main_Thm} is completed. 
\section{Conclusion}
\label{Discussion_Sec}
In this paper, we have proved convergence rates for the upscaling error in a FD-HMM, for wave propagation problems in locally periodic media. The analysis extends the results from the periodic theory, and reveals the precise convergence rates for the difference between the exact/homogenized and numerically upscaled fluxes. The outcomes of the present work are: a) in locally periodic media, in addition to the errors predicted by the periodic theory,  another error appears due to the interaction of the slow and fast scales in the media. These errors are precisely quantified in the present analysis. b) In general, the upscaling error in HMM type algorithms may result in different asymptotic convergence rates depending on the dimension, where an improved convergence rate is typically observed in one dimension, see Figure \ref{Fig_FluxConv1D2D} for numerical results in one and two dimensions. The results in the current study give a complete theoretical explanation of this dimension-dependent phenomenon.

\section{Appendix}
Our aim in this section is to prove Theorem \ref{Truncated_Convergence_Thm}. We first introduce few intermediate results.  The first lemma is from \cite{Arjmand}.
\begin{Lemma}\label{Bounded_Region_Wave_Stability_Lemma}Suppose that $A \in (C^{\infty}(\mathbb{R}))^{d\times d}$,  $f\in L^1 \left( 0,T;H^1_{loc}\left( \mathbb{R}^d \right) \right)$, and 
\begin{equation*}
\begin{array}{ll}
\partial_{tt} u(t,\bx)  - \nabla \cdot \left( A(\bx) \nabla u(t,\bx) \right) = f(t,\bx) \text{   in } \mathbb{R}^d \times (0,T],\\
u(0,\bx) = 0, \quad \partial_t u(0,\bx) = 0 \text{ on } \mathbb{R}^d \times \{ t = 0\}. 
\end{array}
\end{equation*}
Let 
$$
E_{u,\Omega}(t) =  \int_{\Omega} |\partial_t u|^2  + A \nabla u \cdot \nabla u \; d\bx,
$$
and $M > L + t \sqrt{\parallel A \parallel_{\infty}}$. Moreove, let $\Omega_{L} = [-L,L]^d$ with an obvious change for $M$, then the solution $u$ satisfies
\begin{equation}\label{Bounded_Region_Wave_Stability_Lemma_Eqn_1}
 E_{u,\Omega_L}^{1/2}(t) \leq C  \int_{0}^{t} \parallel f(s,\cdot) \parallel_{L^2\left( \Omega_{M}\right)} ds,
\end{equation}
where $C$  does not depend on $T$, but may depend on $A$ and $d$.
\end{Lemma}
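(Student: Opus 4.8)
The plan is to prove this by the classical domain-of-dependence energy method, exploiting the finite speed of propagation governed by $c := \sqrt{\parallel A \parallel_{\infty}}$. Since the goal is to bound the energy inside $\Omega_L$ at time $t$ by the forcing inside the slightly larger cube $\Omega_M$, the natural device is to track the energy over a nested family of shrinking cubes $\Omega_{r(s)} = [-r(s),r(s)]^d$ with $r(s) := L + (t-s)c$, so that $r(t)=L$ and $r(0) = L + tc < M$. The hypothesis $M > L + t\sqrt{\parallel A \parallel_{\infty}}$ is precisely what guarantees $\Omega_{r(s)} \subset \Omega_M$ for every $s \in [0,t]$, so that all spatial integrals stay inside $\Omega_M$.

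First I would define the sliced energy $e(s) := \int_{\Omega_{r(s)}} \bigl( |\partial_t u|^2 + A\nabla u \cdot \nabla u \bigr)\, d\bx$ and differentiate it in $s$ by Reynolds transport. The derivative splits into a volume part and a boundary part. For the volume part I would use that $A$ is time-independent and symmetric to write $\partial_s(A\nabla u\cdot\nabla u) = 2A\nabla u\cdot \nabla\partial_s u$, integrate by parts, and substitute the equation $\partial_{tt}u = \nabla\cdot(A\nabla u)+f$; the interior terms cancel, leaving the forcing contribution $2\int_{\Omega_{r(s)}} \partial_t u\, f\, d\bx$ together with a boundary flux $\oint_{\partial\Omega_{r(s)}} 2\partial_t u\,(A\nabla u)\cdot n\, dS$. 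The moving-boundary part, with outward normal velocity $r'(s) = -c$ on every face of the cube, contributes $-c\oint_{\partial\Omega_{r(s)}} (|\partial_t u|^2 + A\nabla u\cdot\nabla u)\, dS$.

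The heart of the argument is to show that the two boundary integrals combine to something nonpositive, which is where the value $c=\sqrt{\parallel A \parallel_{\infty}}$ is exactly calibrated. On $\partial\Omega_{r(s)}$ the unit normal satisfies, by symmetric positive-definiteness (using $A^2 \le \parallel A \parallel_{\infty} A$), the bound $(A\nabla u)\cdot n \le |A\nabla u| \le c\,\sqrt{A\nabla u\cdot\nabla u}$, whence Cauchy--Schwarz and AM--GM give the pointwise inequality $2\partial_t u\,(A\nabla u)\cdot n \le c\,(|\partial_t u|^2 + A\nabla u\cdot\nabla u)$. Thus the boundary contributions cancel up to sign, and we are left with $e'(s) \le 2\int_{\Omega_{r(s)}}\partial_t u\, f\, d\bx \le 2\,\|f(s,\cdot)\|_{L^2(\Omega_M)}\, e(s)^{1/2}$, where I used $\|\partial_t u(s,\cdot)\|_{L^2(\Omega_{r(s)})} \le e(s)^{1/2}$ and $\Omega_{r(s)}\subset\Omega_M$.

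Finally, since $e(0)=0$ by the vanishing initial data, I would convert this differential inequality into the stated bound: formally $\tfrac{d}{ds}e(s)^{1/2} \le \|f(s,\cdot)\|_{L^2(\Omega_M)}$, and integrating over $[0,t]$ yields $E_{u,\Omega_L}^{1/2}(t) = e(t)^{1/2} \le \int_0^t \|f(s,\cdot)\|_{L^2(\Omega_M)}\, ds$, i.e. the claim with a constant independent of $T$ (indeed $C=1$, modulo a harmless dimensional factor). I expect the main obstacle to be technical rigor rather than the geometric idea: justifying differentiation under the integral sign and the integration by parts requires enough regularity, so I would first establish the estimate for smooth $f$ (using $A\in C^\infty$) and then pass to general $f\in L^1(0,T;H^1_{loc}(\mathbb{R}^d))$ by density; one must also regularize the $e^{1/2}$-inequality near the zeros of $e$, e.g. by replacing $e$ with $e+\delta$ and letting $\delta\to 0$, to avoid dividing by zero.
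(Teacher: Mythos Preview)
Your argument is correct and is the standard domain-of-dependence energy estimate for linear second-order hyperbolic equations. The paper does not actually give its own proof of this lemma: it is quoted verbatim from the reference \cite{Arjmand} and used as a black box. Your shrinking-cube energy argument, with the cancellation of the boundary flux against the moving-boundary term calibrated by $c=\sqrt{\|A\|_\infty}$, is precisely the classical device (the same computation that underlies finite speed of propagation), and it delivers the inequality with $C=1$ once $\|A\|_\infty$ is interpreted as the spectral supremum; any other reasonable matrix norm only costs a dimensional constant, which the statement allows. The technical caveats you flag (smoothing $f$ and regularizing $e$ by $e+\delta$ to differentiate $\sqrt{e}$) are the right ones and are routine.
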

We consider next the following utility lemma:
\begin{Lemma} \label{Lemma_fg}Let $\Omega_{M}=[-M,M]^{d}$ and $Y=[0,1]^{d}$. Moreover, suppose that 
$g=x_j f$, where $f\in H^{k}(Y)$ is a $Y$-periodic function. Then
\begin{align*}
\|  f \|_{H^k(\Omega_{M})} \leq C \left( 1 + M^{d/2} \right) \| f \|_{H^k(Y)},
\end{align*}
and 
\begin{align*}
\|  g \|_{H^k(\Omega_{M})} \leq C \left( 1 + M^{d/2+1} \right) \| f \|_{H^k(Y)},
\end{align*}
where $C$ does not depend on $f,g,$ or $M$ but may depend on $d$. 
\end{Lemma}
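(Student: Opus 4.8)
The plan is to establish the two estimates in turn, in both cases reducing integrals over the large box $\Omega_M$ to integrals over a single period cell $Y$ by exploiting the $Y$-periodicity of $f$, and then treating $g = x_j f$ through the product rule. For the first estimate I would cover $\Omega_M = [-M,M]^d$ by those integer translates $Y + \mathbf{n}$, $\mathbf{n} \in \mathbb{Z}^d$, that intersect it. In each coordinate the interval $[-M,M]$ meets at most $2M+2$ such translates, so the number $N$ of covering cubes obeys $N \leq (2M+2)^d \leq C(1+M)^d$. Because $f$ and all of its derivatives are $Y$-periodic, the $H^k$ norm of $f$ over any full translate $Y+\mathbf{n}$ equals $\|f\|_{H^k(Y)}$, and over a cube only partially contained in $\Omega_M$ it is no larger. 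Summing the squared norms over the covering cubes gives $\|f\|_{H^k(\Omega_M)}^2 \leq N\|f\|_{H^k(Y)}^2 \leq C(1+M)^d\|f\|_{H^k(Y)}^2$; taking square roots and using the elementary bound $(1+M)^{d/2} \leq C(1+M^{d/2})$ yields the first claim.

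For the second estimate the key point is that $x_j$ is linear, so the Leibniz rule collapses to just two terms: for every multi-index $\alpha$ with $|\alpha| \leq k$,
$$
\partial^\alpha (x_j f) = x_j\, \partial^\alpha f + \alpha_j\, \partial^{\alpha - e_j} f,
$$
with $e_j$ the $j$-th unit multi-index and the second term read as zero when $\alpha_j = 0$. On $\Omega_M$ we have $|x_j| \leq M$, so the first term is bounded in $L^2(\Omega_M)$ by $M\|f\|_{H^k(\Omega_M)} \leq CM(1+M^{d/2})\|f\|_{H^k(Y)}$ by the first estimate. The second term is itself $Y$-periodic of differentiation order $\leq k-1$, so the same covering argument at the $L^2$ level bounds its $L^2(\Omega_M)$ norm by $C(1+M^{d/2})\|f\|_{H^k(Y)}$. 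Summing over the finitely many $\alpha$ with $|\alpha| \leq k$ gives $\|g\|_{H^k(\Omega_M)} \leq C(1+M)(1+M^{d/2})\|f\|_{H^k(Y)}$, and expanding $(1+M)(1+M^{d/2}) = 1 + M + M^{d/2} + M^{d/2+1}$ while checking the cases $M \geq 1$ and $M \leq 1$ separately collapses the prefactor to $C(1+M^{d/2+1})$, as required.

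There is no genuine obstacle here; the lemma is a bookkeeping tool whose role is to convert cell-wise periodic bounds into bounds over the growing microscopic box. The only points needing mild care are the treatment of the boundary cubes in the covering, handled by bounding partial-cube contributions by the corresponding full-cube ones, and the tracking of the powers of $M$ in the final simplification, both of which are routine.
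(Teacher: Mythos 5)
Your proof is correct and follows essentially the same route as the paper: the first bound via covering $\Omega_M$ by period cells and counting them, and the second via the product rule for $x_j f$, the bound $|x_j|\leq M$, and a reduction to the first estimate. The only differences are bookkeeping ones (you write out the Leibniz identity $\partial^{\alpha}(x_j f)=x_j\partial^{\alpha}f+\alpha_j\partial^{\alpha-e_j}f$ and treat the lower-order term by the covering argument directly, whereas the paper folds everything into $(1+M^2)\|f\|_{H^k(\Omega_M)}^2$ before invoking part one), and both arguments yield the same powers of $M$.
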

\begin{proof} 
First we observe that
\begin{align*}
\| f \|^{2}_{H^k(\Omega_{M})} &= \int_{\Omega_{M}} |f|^{2}  + |\nabla f|^{2} + \ldots  + |\nabla^{k} f|^{2} \; d\bx \\
&\leq \left( 1 + M^{d} \right) \int_{Y}  |f|^{2}  + |\nabla f|^{2} + \ldots  + |\nabla^{k} f|^{2} dx  = \left( 1 + M^{d} \right) \| f \|_{H^k(Y)}^{2}.
\end{align*}
This proves the first part of the lemma. For $g$ we have
\begin{align*}
\| g \|^{2}_{H^k(\Omega_{M})} &= \int_{\Omega_{M}} |g|^{2}  + |\nabla g|^{2} + \ldots  + |\nabla^{k} g|^{2} \; d\bx \\
&\leq  \int_{\Omega_{M}} |x_j f|^2 + |f|^2 + |x_j f|^2  + \ldots  + |f|^2  + |x_j \nabla^{k} f|^2 \; d\bx \\
&\leq \left( 1 + M^{2} \right)  \| f \|_{H^k(\Omega_{M})}^{2} \leq \left( 1 + M^{d+2} \right) \| f \|_{H^k(Y)}^{2}.
\end{align*}
\end{proof}

Now we use Lemma \ref{Lemma_fg} to estimate the time growth of $v_0$ and $v_1$ solving \eqref{Eq_v0} and \eqref{Eq_vm} respectively.

\begin{Lemma} \label{Lemma_TimeGrowth_v0_v1} Let $v_0$ and $v_1$ be the solutions of the wave equations \eqref{Eq_v0} and \eqref{Eq_vm} respectively. Moreover let $\Omega_{M}:=[-M,M]^d$, then
\begin{align*}
\max_{|z|\leq t}\| v_0(z, \cdot) \|_{H^2(\Omega_M)} &\leq C \left( 1 + M^{d/2 + 1} \right) \left| \bs \right|_{\infty}, 
\\
\max_{|z|\leq t} \quad \| v_1(z, \cdot) \|_{H^2(\Omega_M)} &\leq C \left( 1  + M^{d/2 + 1} \right)  \left( 1+ t^{3} \right)\left| \bs \right|_{\infty}.
\end{align*}
where $C$ does not depend on $M,t$ but may depend on $A$ and $d$.
\end{Lemma}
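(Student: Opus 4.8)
The plan is to combine the quasi-polynomial decompositions from Lemma~\ref{Lemma_QuasiPolynomials} with the high-order energy estimate of Lemma~\ref{Lemma_Energy_Estimate} and the Sobolev transfer of Lemma~\ref{Lemma_fg}, tracking the time growth level by level through the coupled hierarchy. The overall strategy is: bound each periodic coefficient function in $H^{2n+1}(Y)$ uniformly (or with explicit polynomial growth) in time, then push these bounds to $\Omega_M$ via Lemma~\ref{Lemma_fg}.

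First I would treat $v_0 = \bs\cdot\by + v_{00}$. The linear part is handled by direct computation: its second derivatives vanish and its gradient is the constant $\bs$, so $\|\bs\cdot\by\|_{H^2(\Omega_M)}^2 \le C|\bs|_\infty^2\bigl(M^{d} + M^{d+2}\bigr)$, which yields the factor $(1+M^{d/2+1})$. For the periodic part $v_{00}$ solving \eqref{Eq_v00} with zero initial data and the \emph{time-independent} zero-mean forcing $\nabla_\by\cdot A\bs$, the time-independent branch of Lemma~\ref{Lemma_Energy_Estimate} gives $\|v_{00}(z,\cdot)\|_{H^{2n+1}(Y)}\le C\|\nabla_\by\cdot A\bs\|_{H^{2n}(Y)}\le C|\bs|_\infty$ uniformly in $z$; the first estimate of Lemma~\ref{Lemma_fg} then produces $\|v_{00}(z,\cdot)\|_{H^2(\Omega_M)}\le C(1+M^{d/2})|\bs|_\infty$, and adding the two parts proves the first bound.

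For $v_1$ I would use the finer decomposition $v_1 = v_{10}+\sum_j y_j v_{11j}$ from \eqref{eqn1_Lemma_v1}, together with the split $v_{10}=\tilde v_{10}+g(t)$ into its zero-mean periodic part and its spatial average. The growth propagates as follows. Since $f_{11j}$ in \eqref{eqn3_Lemma_v1} depends on time only through the (uniformly bounded) $v_{00}$, it is bounded uniformly in time, and integrating once in Lemma~\ref{Lemma_Energy_Estimate} gives the linear growth $\|v_{11j}(z,\cdot)\|_{H^{2n+1}(Y)}\le C(1+|z|)|\bs|_\infty$. Feeding this into \eqref{Eqn_vtilde10}, whose forcing $\tilde M[v_{11}]+\tilde f_{10}$ grows at most linearly, a further integration yields the quadratic growth $\|\tilde v_{10}(z,\cdot)\|_{H^{2n+1}(Y)}\le C(1+|z|^2)|\bs|_\infty$. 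Finally the scalar $g$ solves the ODE \eqref{Eqn_g_Equation} whose right-hand side $\overline{M[v_{11}]}+\overline{f_{10}}$ again grows linearly through $v_{11j}$, so two integrations give the cubic growth $|g(z)|\le C(1+|z|^3)|\bs|_\infty$. To pass to $\Omega_M$, I would apply the first estimate of Lemma~\ref{Lemma_fg} to the periodic $\tilde v_{10}$, the second ($g=x_jf$ type) estimate to each $y_j v_{11j}$, and bound the constant $g(z)$ by $|g(z)|\,|\Omega_M|^{1/2}=CM^{d/2}|g(z)|$; collecting the dominant powers $M^{d/2+1}$ (from $y_j v_{11j}$) and $t^3$ (from $g$) gives the stated bound.

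The main obstacle is the careful bookkeeping of the time growth through this coupled chain $v_{00}\to v_{11j}\to(\tilde v_{10},g)$, and in particular recognizing that the scalar ODE for $g$—a \emph{double} integration of a forcing that already grows linearly in $t$—is precisely what produces the dominant cubic-in-$t$ factor, while the purely periodic parts grow only quadratically. A secondary point needing care is that $g$ is a nonzero-mean spatial constant and is therefore not covered by the periodic transfer in Lemma~\ref{Lemma_fg}; it must be estimated directly through its sup over $[-M,M]^d$. One should also note that, since the energy estimates of Lemma~\ref{Lemma_Energy_Estimate} hold for every $n$ and the data are smooth, there is no loss in the repeated bootstrapping where the forcing at one level requires higher Sobolev norms at the previous level, so the argument closes for the $H^2(\Omega_M)$ norm required here.
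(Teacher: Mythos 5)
Your proposal is correct and follows essentially the same route as the paper: the decomposition $v_0=\bs\cdot\by+v_{00}$, $v_1=\tilde v_{10}+g(t)+\sum_j y_jv_{11j}$, the cascading energy estimates (uniform for $v_{00}$, linear for $v_{11j}$, quadratic for $\tilde v_{10}$, cubic for $g$ via double integration of the ODE \eqref{Eqn_g_Equation}), and the transfer to $\Omega_M$ via Lemma \ref{Lemma_fg} all match the paper's argument, which simply cites the bounds \eqref{Eqn_Energy_Estimates_v} already established in the proof of Theorem \ref{Thm_TimeAverages_d} rather than re-deriving them. (One minor remark: a spatial constant such as $g(t)$ is still $Y$-periodic, so Lemma \ref{Lemma_fg} does apply to it — the paper applies it to the full $v_{10}=\tilde v_{10}+g$ — but your direct $M^{d/2}|g|$ estimate is equivalent.)
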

\begin{proof} Using the relation $v_0 = v_{00} + \bs \cdot \bx$, toghether with Lemma \ref{Lemma_fg} and the estimate \eqref{Eqn_Energy_Estimates_v} we get
\begin{align*}
\| v_0(t,\cdot) \|_{H^2(\Omega_{M})} &\leq \| v_{00}(t,\cdot) \|_{H^2(\Omega_{M})} + \sum_{j=1}^{d} \| s_j x_j  \|_{H^2(\Omega_{M})} \\
&\leq  C_1 \left(  1 + M^{d/2} \right) \| v_{00}(t,\cdot) \|_{H^2(Y)}  + C_2 \left( 1 + M^{d/2 + 1} \right)
\\ &\leq C \left( 1 + M^{d/2 + 1} \right) \left| \bs \right|_{\infty}.
\end{align*}
To prove the estimate for $v_1$, we first recall from \eqref{Eqn_g_Equation} that
\begin{align*}
g^{\prime\prime}(t) &=  \overline{M[v_{11}]} + \overline{f_{10}}, \\
g(0)  &= g^{\prime}(0) = 0.
\end{align*}
We then use the estimate for $v_{11j}$ in \eqref{Eqn_Energy_Estimates_v} and the inequality \eqref{Eqn_f10_Estimate} and get
\begin{align*}
\left| g^{\prime\prime}(t) \right| &\leq C_1 \left|  \overline{M[v_{11}]}(t) \right|  + C_2 \left| \overline{f_{10}(t,\cdot)} \right| \\
&\leq C_1 \|  M[v_{11}](t,\cdot)\|_{L^{2}(Y)}  + C_2 \|f_{10}(t,\cdot) \|_{L^2(Y)} \\
&\leq C_1 \sum_{j=1}^{d} \| v_{11j}(t,\cdot) \|_{H^1(Y)} + C_2 |\bs|_{\infty}  \leq C \left( 1+ t \right) |\bs|_{\infty}. 
\end{align*}
Using the conditions $g(0) = g^{\prime}(0) =0$, we obtain
\begin{align} \label{Eqn_Estimate_g}
\left| g(t) \right| \leq \int_{0}^{t} \int_{0}^{s} \left| g^{\prime\prime}(z) \right| dz ds \leq C \left( 1 + t^{3} \right).
\end{align}
Then by Lemma \ref{Lemma_fg} and the estimates \eqref{Eqn_Energy_Estimates_v} and \eqref{Eqn_Estimate_g} it follows that
\begin{align*}
\| & v_1(t,\cdot) \|_{H^2(\Omega_{M})} \leq \|  v_{10}(t,\cdot) \|_{H^2(\Omega_{M})} + \sum_{j=1}^{d} \| x_j v_{11j}\|_{H^{2}(\Omega_{M})} \\
&\leq \left( 1 + M^{d/2} \right) \|  v_{10}(t,\cdot) \|_{H^2(Y)}  + \left( 1  + M^{d/2 + 1} \right) \sum_{j=1}^{d}  \| v_{11j}(t,\cdot)\|_{H^{2}(Y)} \\ 
&\leq \left( 1 + M^{d/2} \right) \left( \|  \tilde{v}_{10}(t,\cdot) \|_{H^2(Y)}   + \left|g(t) \right|\right) + \left( 1  + M^{d/2 + 1} \right) \sum_{j=1}^{d}  \| v_{11j}(t,\cdot)\|_{H^{2}(Y)} \\
&\leq \left( 1 + M^{d/2} \right) \left(1 + t^{3} \right) |\bs|_{\infty} + \left( 1  + M^{d/2 + 1} \right) \left( 1+ t\right) |\bs|_{\infty} \\ &\leq \left( 1  + M^{d/2 + 1} \right)  \left( 1+ t^{3} \right)  |\bs|_{\infty}.
\end{align*}
\end{proof}
Now we give the proof of Theorem \ref{Truncated_Convergence_Thm}.
\begin{proof}(of Theorem \ref{Truncated_Convergence_Thm}) The proof of this theorem is based on deriving an explicit equation for the error $e(t,x) = v(t,x) - \tilde{v}_m(t,x)$ and estimating the error using Lemma \ref{Bounded_Region_Wave_Stability_Lemma}. We start with introducing the truncated Taylor expansion of $A(x,y)$ (denoted by $\tilde{A}_n$) in terms of $x$ in multi-dimensions. We use a  multi-index notation (with index $\beta$) as follows:   

\begin{equation}\label{Taylor_Expansion_A}
\tilde{A}_n(\bx,\by)  = \sum_{|\beta| \leq n} \frac{\bx^{\beta}}{\beta !} \partial_\bx^{\beta} A(0,y). 
\end{equation}

In addition, we denote the tail of the expansion by $\delta A_n(\bx,\by)  = A(\bx,\by) - \tilde{A}_n(\bx,\by)$. Taylor's formula in multi-dimensions gives then 
\begin{equation} \label{Estimate_Tail_A}
\begin{array}{ll}
\left|  \delta A_n(\varepsilon \bx,\bx) \right|  &=  \left| A(\varepsilon \bx, \bx) - \tilde{A}_n(\varepsilon \bx,\bx) \right| \\
 &\hspace{-0.05cm}= \displaystyle \left| \sum_{|\beta| =n+1} \frac{n+1}{\beta !} \varepsilon^{|\beta|} \bx^{|\beta|} \int_{0}^1 (1-t)^n \left( \partial^{\beta}_\bx A(\bx,\by)|_{\bx = \varepsilon \bx t}\right)|_{\by=\bx} dt   \right| \\
&\leq C_n \varepsilon^{n+1} \left| \bx  \right|^{n+1} \displaystyle \max_{|\beta| = n+1} | \partial_\bx^{\beta} A |, 
\end{array}
\end{equation}
and
\begin{equation} \label{Estimate_Tail_Partial_A}
\begin{array}{ll}
\left| \partial_{x_{k}} \delta A_n(\varepsilon \bx,\bx) \right|  &=  \left| \partial_{x_{k}} \left( A(\varepsilon \bx, \bx) - \tilde{A}_n(\varepsilon \bx,\bx)\right) \right| \\
& = \displaystyle \left| \partial_{x_k} \sum_{|\beta| =n+1} \frac{n+1}{\beta !} \varepsilon^{|\beta|} \bx^{|\beta|} \int_{0}^1 (1-t)^n  \partial^{\beta}_\bx A(\bx,\by)|_{\bx = \varepsilon \bx t} \; dt |_{\by=\bx} \right| \\
&\leq C_n \varepsilon^{n+1} \left| \bx  \right|^{n} \left( 1  + \left| \bx \right| \right) \displaystyle \max_{|\beta| = n+2}| \partial_\bx^{\beta} \partial_\by A |. 
\end{array}
\end{equation}

Next by equation \eqref{Eq_vm} and the definition of $\tilde{v}_m$ we have
\begin{equation*}
\begin{array}{lll}
\partial_{tt} \tilde{v}_m(t,\bx)  &=& \sum_{k=0}^{m} \frac{\varepsilon^k}{k!} \partial_{tt} v_k(t,\bx) \\
&=& \sum_{k=0}^{m} \frac{\varepsilon^k}{k!} \sum_{j=0}^{k} \nabla \cdot \left( c_{kj} \left( \partial_{\varepsilon}^{k-j}  A(\varepsilon \bx, \bx)\big|_{\varepsilon=0} \right) \nabla v_j  \right)  \\ 
& =& \nabla \cdot \left( \sum_{j=0}^{m} \sum_{k=j}^{m} \frac{\varepsilon^k}{j! (k-j)!} \left( \partial_{\varepsilon}^{k-j}  A(\varepsilon \bx, \bx)\big|_{\varepsilon=0} \right) \nabla v_j \right)\\
& =& \nabla \cdot \left( \sum_{j=0}^{m} \frac{\varepsilon^j}{j!} \sum_{k=0}^{m-j} \frac{\varepsilon^k}{k!} \left( \partial_{\varepsilon}^{k}  A(\varepsilon \bx, \bx)\big|_{\varepsilon=0} \right) \nabla v_j \right) \\
& = &\nabla \cdot \left( \sum_{j=0}^{m} \frac{\varepsilon^j}{j!} \sum_{k=0}^{m-j} \frac{\varepsilon^k}{k!}  \sum_{|\beta| = k} \frac{|\beta|!}{\beta!} x^{\beta}  \partial_{\bx}^{\beta}  A(0, \by)\big|_{\by=\bx} \nabla v_j \right) \\
&=&\nabla \cdot \left( \sum_{j=0}^{m} \frac{\varepsilon^j}{j!} \sum_{|\beta| \leq m-j} \frac{\varepsilon^{|\beta|}}{\beta!} \bx^{\beta}   \partial_{\bx}^{\beta}  A(0, \by)\big|_{\by=\bx} \nabla v_j \right) \\
&=& \nabla \cdot \left( \sum_{j=0}^{m} \frac{\varepsilon^j}{j!} \tilde{A}_{m-j}(\varepsilon \bx,\bx) \nabla v_j \right).
\end{array}
\end{equation*}

Furthermore, by definition of $e(t,\bx)$ and equation \eqref{eqn_ScaledMainProblem_2D} we obtain
\begin{equation*}
\begin{array}{llll}
\partial_{tt} e &= \partial_{tt} v  - \partial_{tt} \tilde{v}_m \\ 
&= \nabla \cdot \left( A(\varepsilon \bx,\bx ) \nabla v \right) - \nabla \cdot \left( \sum_{j=0}^{m} \frac{\varepsilon^j}{j!} \tilde{A}_{m-j}(\varepsilon \bx,\bx) \nabla v_j \right) \\
&= \nabla \cdot \left(A(\varepsilon \bx,\bx ) \nabla e  \right)  + \nabla \cdot \left( \sum_{j=0}^{m} \frac{\varepsilon^j}{j!} \left( A(\varepsilon \bx,\bx) - \tilde{A}_{m-j}(\varepsilon \bx,\bx) \right) \nabla v_j \right) \\
&= \nabla \cdot \left(A(\varepsilon \bx,\bx ) \nabla e  \right) + f(t,\bx).
\end{array}
\end{equation*}

Together with zero initial data, $e$ satisfies then
\begin{equation*}
\begin{array}{ll}
\partial_{tt} e(t,\bx)  - \nabla \cdot \left( A(\varepsilon \bx, \bx) e(t,\bx) \right) = f(t,\bx), \\ e(0,\bx) = \partial_t e(0,\bx) = 0,
\end{array}
\end{equation*}
where,  
\begin{equation*}
f(t,\bx) = \nabla \cdot \left( \sum_{j=0}^{m} \frac{\varepsilon^j}{j!} \left( A(\varepsilon \bx,\bx) - \tilde{A}_{m-j}(\varepsilon \bx,\bx) \right) \nabla v_j \right).
\end{equation*}

By Lemma \ref{Bounded_Region_Wave_Stability_Lemma}  we have $E_{e,\Omega_L}^{1/2}(t) \leq C \int_{0}^{t}\parallel f(s,\cdot) \parallel_{L^2 \left( \Omega_{M} \right)} \; ds$. But
\begin{equation*}
\begin{array}{lll}
& \displaystyle \int_{0}^{t} \parallel f(s,\cdot) \parallel_{L^2\left( \Omega_{M} \right)} \; ds  = \displaystyle \int_{0}^{t} \parallel \nabla \cdot \left( \sum_{j=0}^{m} \frac{\varepsilon^j}{j!} \left( \delta A_{m-j}(\varepsilon  \cdot,\cdot) \right) \nabla v_j(s,\cdot) \right) \parallel_{L^2\left( \Omega_{M} \right)}  ds \\
& = \displaystyle \int_{0}^{t} \parallel \sum_{j=0}^{m} \frac{\varepsilon^j}{j!} \left( \nabla \cdot  \delta A_{m-j}(\varepsilon \cdot,\cdot) \cdot \nabla v_j(s,\cdot)   +   \delta A_{m-j}(\varepsilon \cdot, \cdot):\nabla^2 v_j(s,\cdot)  \right) \parallel_{L^2\left( \Omega_{M} \right)}  ds \\
& \leq \sum_{j=0}^{m} \frac{\varepsilon^j}{j!} C_d     \displaystyle \max_{x\in \Omega_M, 0\leq |\beta| \leq 1 }|\partial_x^{\beta} \delta A_{m-j}(\varepsilon \bx, \bx)| \int_{0}^{t} \| v_j(s,\cdot) \|_{H^2(\Omega_M)} \; ds.
\end{array}
\end{equation*}
Next by \eqref{Estimate_Tail_A} and \eqref{Estimate_Tail_Partial_A} we have $$ \displaystyle \max_{\bx\in \Omega_M, 0\leq |\beta| \leq 1 }|\partial_\bx^{\beta} \delta A_{m-j}(\varepsilon \bx, \bx)| \leq C_{A,m} \varepsilon^{m-j+1} M^{m-j} \left( 1 + M \right).$$
Consequently,
\begin{equation*}
\begin{array}{ll}
E_{e,\Omega_L}^{1/2}(t) & \leq C_{A,m,d}  \varepsilon^{m+1} \sum_{j=0}^{m} M^{m-j} \left(1 + M \right) t \max_{|z|\leq t} \left\| v_{j}(z,\cdot) \right\|_{H^2(\Omega_M)}.
\end{array}
\end{equation*}
The first part of the theorem follows then by ellipticity of $A$. To prove the second part of the theorem we put $m=1$ in the last inequality and use Lemma \ref{Lemma_TimeGrowth_v0_v1} and obtain
\begin{align*}
& \| \nabla e \|_{L^{2}(\Omega_{M})} \leq \\ &\leq C \e^2 \left(   M \left( 1+ M \right)  t \max_{|z|\leq t} \| v_0(z,\cdot) \|_{H^2(\Omega_{M})} + \left(1 + M \right) t \max_{|z|\leq t} \| v_1(z,\cdot) \|_{H^2(\Omega_{M})}\right) \\
&\leq C \e^{2}\left( 1+ M^{d/2 + 1} \right) \left( 1 +M \right) t |\bs|_{\infty} \left( M    +  \left( 1+t^3 \right)\right) \\ &\leq  C \e^{2}\left( 1+ M^{d/2 + 1} \right) \left( 1 +M^2 \right) \left( 1 + t^4 \right) |\bs|_{\infty}.
\end{align*}
\end{proof}


\bibliographystyle{spmpsci}
\bibliography{references}

\end{document}